\documentclass[11pt]{article}
\usepackage{amsmath}
\usepackage[thmmarks,amsmath,amsthm]{ntheorem}

\usepackage{amsfonts, psfrag, graphicx, amssymb,enumitem,indentfirst,float,geometry,color,enumerate,graphicx,subfigure,algorithm,algpseudocode,pifont,hyperref}
\usepackage{caption}

\allowdisplaybreaks[4]
\geometry{left=2cm,right=2cm,top=2.5cm,bottom=2.5cm}

\numberwithin{equation}{section}
\numberwithin{figure}{section}

\usepackage{lipsum}
\newcommand\blfootnote[1]{%
  \begingroup
  \renewcommand\thefootnote{}\footnote{#1}%
  \addtocounter{footnote}{-1}%
  \endgroup
}

\newtheorem{thm}{Theorem}[section]
\newtheorem{lemma}{Lemma}[section]
\newtheorem{rem}{Remark}[section]

\newcommand{\commentout}[1]{{}} 


\newcommand{\abs}[1]{\left|#1\right|}
\newcommand{\norm}[1]{\left\|#1\right\|}

\newcommand{\bfc}{{\bf c}}

\newcommand{\bfu}{{\bf u}}

\newcommand{\bfv}{{\bf v}}

\newcommand{\bfphi}{\boldsymbol{\phi}}

\newcommand{\bfpsi}{\boldsymbol{\psi}}

\begin{document}
\title{Approximation Capabilities of Immersed Finite Element Spaces\\
for Elasticity Interface Problems \thanks{This research was partially supported by GRF B-Q56D and B-Q40W of HKSAR.}
\blfootnote{Keywords: Interface problems, discontinuous coefficients, elasticity equations, immersed finite element method, .}
}
\author{Ruchi Guo \thanks{ruchi91@vt.edu, Department of Mathematics,
Virginia Tech}
\and Tao Lin \thanks{tlin@vt.edu, Department of Mathematics,
Virginia Tech}
\and Yanping Lin \thanks{yanping.lin@polyu.edu.hk, Department of Applied Mathematics, Hong Kong Polytechnic University}}
\date{}
\maketitle

\begin{abstract}
We construct and analyze a group of immersed finite element (IFE) spaces formed by linear, bilinear and rotated $Q_1$ polynomials for solving planar elasticity equation involving interface. The shape functions in these IFE spaces are constructed through a group of approximate jump conditions such that the unisolvence of the bilinear and rotated $Q_1$ IFE shape functions are always guaranteed regardless of the Lam\'e parameters and the interface location. The boundedness property and a group of identities of the proposed IFE shape functions are established. A multi-point Taylor expansion is utilized to show the optimal approximation capabilities for the proposed IFE spaces through the Lagrange type interpolation operators.
\end{abstract}

\section{Introduction}

In many applications of sciences and engineering, we need to consider an elastic object formed with multiple materials which leads to a linear elasticity system with discontinuous coefficients whose values reflect the difference between materials. To be specific and without loss of generality, we consider an elastic object forming a domain $\Omega \subset \mathbb{R}^2$ separated by a smooth interface curve $\Gamma$ into two sub-domains $\Omega^-$ and $\Omega^+$ each of which is occupied by a different material. Thus, the Lam\'{e} parameters of $\Omega$ are piecewise constant functions in the following forms:
\begin{equation}
\label{lame_para}
\lambda=
\left\{\begin{array}{cc}
\lambda^- & \text{if} \; X\in \Omega^- , \\
\lambda^+ & \text{if} \; X\in \Omega^+ ,
\end{array}\right.
~~~~~~~~~~
\mu=
\left\{\begin{array}{cc}
\mu^- & \text{if} \; X\in \Omega^- , \\
\mu^+ & \text{if} \; X\in \Omega^+ .
\end{array}\right.
\end{equation}
As usual, we assume that the $\mathbf{ u}=(u_1,u_2)^T$ is modeled by the planar linear elasticity equations:
\begin{eqnarray}
\label{elas_eq0}
&- \textrm{div} ~ \sigma(\mathbf{ u}) = \mathbf{ f} ~~~ &\textrm{in}~\Omega^- \cup \Omega^+, \\
& \mathbf{ u} = \mathbf{ g} ~~ &\textrm{on} ~ \partial\Omega,
\end{eqnarray}
where $\mathbf{ f}=(f_1,f_2)^T$ and $\mathbf{ g}=(g_1,g_2)^T$ represent the given body force and the displacement on the boundary, respectively, $\sigma(\mathbf{ u})=(\sigma_{ij}(\mathbf{ u}))_{1\leqslant i,j \leqslant 2}$ is the stress tensor given by
\begin{equation}
\label{stress_tensor}
\sigma_{ij}(\mathbf{ u}) = \lambda (\nabla\cdot \mathbf{ u}) \delta_{i,j} + 2\mu \epsilon_{ij}(\mathbf{ u}), ~~~ \textrm{with}~~\epsilon_{ij}(\mathbf{ u})=\frac{1}{2}\left( \frac{\partial u_i}{\partial x_j} + \frac{\partial u_j}{\partial x_i} \right)
\end{equation}
being the strain tensor. Furthermore, the discontinuity in the Lam\'{e} parameters requires the displacement $\mathbf{ u}=(u_1,u_2)^T$ to satisfy the jump conditions across the material interface $\Gamma$:
\begin{eqnarray}
 & [\mathbf{  u}]_{\Gamma} &:= (\mathbf{ u}^+-\mathbf{ u}^-)|_{\Gamma} = \mathbf{ 0}, \label{jump_contin} \\
 & [\sigma(\mathbf{ u})\mathbf{ n}]_{\Gamma} &:= (\sigma^+(\mathbf{ u}^+)\mathbf{ n} - \sigma^-(\mathbf{ u}^-)\mathbf{ n})|_{\Gamma}  = \mathbf{ 0}, \label{jump_stress}
\end{eqnarray}
where $\bfu^s = \bfu|_{\Omega^s}, \sigma^s(\mathbf{ u}^s) = \sigma(\mathbf{ u})|_{\Omega^s}, s = -, +$, and $\mathbf{ n}$ is the normal vector to $\Gamma$.

We call \eqref{elas_eq0}-\eqref{jump_stress} an elasticity interface problem for determining the displacement $\mathbf{ u}=(u_1,u_2)^T$.
Elasticity interface problems have a wide range of applications in engineering and science, such as the inverse problems \cite{1999AbdaAmeur,1999AlvesTuong,2004AmeurBurger} in which one needs to recover the location or geometry of buried cracks, cavities or inclusions, and the structure optimization problems \cite{2003BendsoeSigmund,140Cherkaev,J.Sokolowski_J.-P.Zolesio_1992} in which one aims at optimizing the distribution of different elastic materials such that the overall structure compliance can be minimized, and additional elasticity problems can be found in \cite{2001GaoHuangAbrham,1997JouLeoLowengrub,2000LeoLowengrubNie,1995SuttonBalluffi}, to name just a few.

\commentout{
Elasticity interface problems have wide application in engineering and science. 
For instance, a large group of inverse problems arise in the context of linear elasticity involving interface, in which one needs to either recover the location or geometry of buried cracks, cavities or inclusions \cite{1999AbdaAmeur,1999AlvesTuong,2004AmeurBurger} with different elastic properties than the surroundings or identify the unknown parameters of various elastic materials \cite{1995Andrei,2008JadambaRaciti,2013Hegemann}. Another field of great research interest is the elastic structure optimization problem. In this field, one aims at optimizing the distribution of materials with different elastic properties such that overall structure compliance can be minimized \cite{2003BendsoeSigmund,140Cherkaev,J.Sokolowski_J.-P.Zolesio_1992}. And for other interesting elastic problems, we refer readers to \cite{2001GaoHuangAbrham,1997JouLeoLowengrub,2000LeoLowengrubNie,1995SuttonBalluffi}, to name just a few.
}

Finite element methods \cite{2008BrennerScott,1988Ciarlet,2000ZienkiewiczTaylor} and discontinuous Galerkin methods \cite{2006CockburnSchotzauWang,2003HansboLarson,2006Wihler} have been developed to solve elasticity interface problems, and these methods perform optimally provided that their mesh is interface-fitted \cite{2000BabuskaOsborn,1998ChenZou}. In some applications, such as those inverse/design problems mentioned above, the shape or location of the interface is usually changing in the computation. And in general, it is non-trivial and time consuming to generate an interface-fitting mesh again and again; therefore, solving \eqref{elas_eq0}-\eqref{jump_stress}
on interface-independent (non-interface-fitted) meshes has attracted research attentions.
Both the finite element approach or the finite difference approach have been attempted. For example,
a unfitted finite element method using the Nitsche's penalty along the interface to enforce the jump conditions is presented in \cite{2009BeckerBurmanHansbo,2004HansboHansbo}, and some immersed interface methods based on finite difference formulation are presented in
\cite{2004YangTHESIS,2003YangLiLi} which handle the jump conditions through a local coordinate transformation between sub-elements partitioned by the interface.

Immersed finite element (IFE) methods are developed for solving interface problems with interface-independent meshes.
The key idea of an IFE space is to use standard polynomials on non-interface elements, but Hsieh-Clough-Tocher type \cite{2001Braess,1966CloughTocher} macro polynomials constructed according to interface jump conditions on interface elements. There have been quite a few publications on IFE methods, for example, IFE methods for elliptic interface problems are discussed in \cite{2008HeLinLin,2011HeLinLin,2004LiLinLinRogers,2003LiLinWu,2015LinLinZhang,2016AdjeridGuoLin,2013ZhangTHESIS}, IFE methods for
interface problems of other types partial differential equations are presented in \cite{2015AdjeridChaabaneLin,2017ChuHanCao,2013HeLinLinZhang,2011LinLinSunWang,2016Moon,2015ChenwuXiao}. In particular, for planar-elasticity interface problems
described by \eqref{elas_eq0}-\eqref{jump_stress}, a non-conforming linear IFE space on a uniform triangular mesh is discussed in
in \cite{2007GongTHESIS,2010GongLi,2004YangTHESIS}. A conforming IFE space is developed in \cite{2005LiYang,2004YangTHESIS} by extending the
IFE shape functions in \cite{2007GongTHESIS,2010GongLi,2004YangTHESIS} to the neighborhood interface elements. A bilinear
IFE space on a rectangular Cartesian mesh is discussed in \cite{2012LinZhang}. A non-conforming IFE space using the rotated $Q_1$ polynomials
is presented in \cite{2013LinSheenZhang} which leads to a locking-free IFE method. Most of the IFE methods for the planar-elasticity interface problems
are Gelerkin type, i.e., the test and trial functions used in each of these methods are from the same IFE space, but the Petrov-Galerkin formulation
can also be used, for example, a Petrov-Galerkin IFE scheme is developed in \cite{2012HouLiWangWang} that uses standard Lagrange polynomials as the test functions.

\commentout{
The immersed finite element(IFE) methods \cite{2008HeLinLin,2011HeLinLin,2004LiLinLinRogers,2015LinLinZhang,2016AdjeridGuoLin,2013ZhangTHESIS} were developed for the elliptic interface problems by employing the Hsieh-Clough-Tocher type macro functions \cite{2001Braess} constructed according to the jump conditions on interface elements to capture the jump behavior. See \cite{2015AdjeridChaabaneLin,2017ChuHanCao,2013HeLinLinZhang,2011LinLinSunWang,2016Moon,2015ChenwuXiao} for the application of IFE methods on interface problems with other equations or jump conditions. Specifically for the elasticity interface problems, a non-conforming linear IFE space was first proposed in \cite{2007GongTHESIS,2010GongLi,2004YangTHESIS} on a uniform triangular mesh. This idea was then employed to construct conforming IFE spaces by extending the shape functions to the neighborhood interface elements \cite{2005LiYang,2004YangTHESIS}. A bilinear
IFE space on a rectangular Cartesian mesh is discussed in \cite{2012LinZhang} which also presents special configuration of an interface element and
Lam\'e parameters in which the Lagrange type linear or bilinear IFE shape functions cannot be constructed.

Besides, instead of using the IFE shape functions as the test functions, the authors in proposed the Petrov Galerkin scheme by employing the standard Lagrange polynomials as the test functions. We note that these IFE spaces for the elasticity interface problems in the literature lack of theoretical analysis for even their approximation capabilities.

Recently in \cite{2012LinZhang}, the authors designed some special interface element configuration on which the linear system for solving the linear or bilinear IFE shape functions is singular for specific Lam\'e parameters. This non-unisolvence was circumvented in \cite{2013LinSheenZhang} by using the IFE spaces constructed from the rotated $Q_1$ polynomials which is also locking free. Besides, instead of using the IFE shape functions as the test functions, the authors in \cite{2012HouLiWangWang} proposed the Petrov Galerkin scheme by employing the standard Lagrange polynomials as the test functions. We note that these IFE spaces for the elasticity interface problems in the literature lack of theoretical analysis for even their approximation capabilities.
}

This article focuses on two issues in the research of IFE methods for interface problems of the planar elasticity. First, we develop a
unified construction procedure for shape functions of the IFE spaces defined on a triangular or rectangular mesh with linear or bilinear or rotated $Q_1$ polynomials, respectively. By this procedure, the coefficients in an IFE shape function satisfy a \textit{Sherman-Morrison} linear system from which the unisolvance of the IFE shape functions can be readily deducted. Second, we derive a group of multi-point Taylor expansions for vector functions satisfying the jump conditions specified in \eqref{jump_contin} and \eqref{jump_stress} for the planar-elasticity interface problems, and we then employ them in the framework recently developed in \cite{2016GuoLin} to show the optimal approximation capabilities for the IFE spaces considered in this article. However, in contrast to
the usual Lagrange type finite element space for the planar elasticity boundary value problems, the two components in each vector IFE shape function on an interface element are coupled by the interface jump conditions; hence, the error analysis presented in this article has some essential features different from those for scalar IFE spaces discussed in \cite{2016GuoLin,2008HeLinLin,2004LiLinLinRogers}. To our best knowledge,
this is the first time the approximation capability of IFE spaces formed with vector functions is analyzed, and this is an important step towards to the
establishment of the theoretical foundation for IFE methods that can solve interface problems of the linear elasticity system with interface-independent (such as Cartesian) meshes.

\commentout{
The contribution of this article consists of two parts. First we develop a \textit{Sherman-Morrison} linear system to construct IFE shape functions as piecewise linear or bilinear polynomials by enforcing the continuity and stress jump conditions on the line connecting the intersection points of the interface with the element edges. We show that for the linear case the proposed IFE shape functions are equivalent to those in the literature \cite{2012LinZhang} but for the bilinear and rotated $Q_1$ cases, the proposed IFE shape functions can be always uniquely determined by their values at the element vertices or midpoints of edges regardless of the interface location and Lam\'e parameters. Secondly we derive a group of multi-point Taylor expansion for vector functions satisfying the continuity and stress jump conditions and apply them in the framework developed by \cite{2016GuoLin} to show the optimal approximation capabilities for the proposed IFE spaces thorough the Lagrange type interpolation operator. To our best knowledge, applying the multi-point Taylor expansion to handle elasticity jump conditions is rarely seen in the literature.
}

This article consists of 5 additional sections. The next section is for some basic notations and assumptions. In Section 3, we establish a few fundamental geometric identities and estimates related to the interface. In Section 4, we derive the multi-point Taylor expansions for a vector function $\mathbf{ u}$ satisfying the jump conditions \eqref{jump_contin} and \eqref{jump_stress} along the interface. In Section 5, we derive a \textit{Sherman-Morrison} linear system for determining the coefficients in IFE shape functions on an interface element, study properties of these shape functions, and prove the optimal approximation capabilities for the IFE spaces considered in this article. In the last section, we present a group of numerical examples to illustrate the approximation features of these IFE spaces.


\section{Preliminaries}\label{sec:preliminaries}
We now describe terms and facts to be used in the discussions. Let $\Omega\subset \mathbb{R}^2$ be a bounded domain formed as union of finitely many rectangles/triangles, and without loss of generality, we assume $\Omega$ is separated by $\Gamma$ into two subdomains $\Omega^+$ and $\Omega^-$ such that $\overline{\Omega} = \overline{\Omega^+} \cup \overline{\Omega^-}$. For a measurable subset $\widetilde{\Omega}\subseteq\Omega$, we define the vector Sobolev space
$\mathbf{ W}^{k,p}(\widetilde{\Omega})= \left[W^{k,p}(\widetilde{\Omega})\right]^2$
where $W^{k,p}(\widetilde{\Omega})$ is the standard Sobolev space, and the associated norm and semi-norm of $\mathbf{ W}^{k,p}(\widetilde{\Omega})$ are such that
for every $\mathbf{ u} = (u_1, u_2)^T \in W^{k,p}(\widetilde{\Omega})$,
\begin{equation}
\label{sobolev_norm}
\| \mathbf{ u} \|_{k,p,\widetilde{\Omega}}=\| u_1 \|_{k,p,\widetilde{\Omega}} + \| u_2 \|_{k,p,\widetilde{\Omega}} ~~~\textrm{and} ~~~ |\mathbf{ u}|_{k,p,\widetilde{\Omega}}= \| D^{\alpha}u_1 \|_{0,p,\widetilde{\Omega}} +  \| D^{\alpha}u_2 \|_{0,p,\widetilde{\Omega}}, ~~ |\alpha|=k.
\end{equation}
The related vector Hilbert space is denoted by $\mathbf{ H}^k(\widetilde{\Omega})=\mathbf{ W}^{k,2}(\widetilde{\Omega})$. Let $\mathbf{ C}^k(\widetilde{\Omega})$ be the collection of $k$-th differentiable smooth vector functions. When $\widetilde{\Omega}^s=\widetilde{\Omega}\cap\Omega^s \not = \emptyset, s = \pm$, and
$k \geq 1$, we define
\begin{equation}
\label{Hil_sp_int}
\mathbf{ PW}^{k, p}_{int}(\widetilde{\Omega}) = \{ \mathbf{ u}\,:\, \mathbf{ u}\in \mathbf{ W}^{k,p}(\widetilde{\Omega}^s),~s=\pm; ~ [\mathbf{ u}]_{\Gamma}=\mathbf{ 0},~\textrm{and} ~ [\sigma(\mathbf{ u})\mathbf{ n}]_{\Gamma}=\mathbf{ 0} \},
\end{equation}
\begin{equation}
\label{C_sp_int}
\mathbf{ PC}^k_{int}(\widetilde{\Omega}) = \{ \mathbf{ u}\,:\, \mathbf{ u}\in \mathbf{ C}^k(\widetilde{\Omega}^s),~s=\pm; ~ [\mathbf{ u}]_{\Gamma}=\mathbf{ 0},~\textrm{and} ~ [\sigma(\mathbf{ u})\mathbf{ n}]_{\Gamma}=\mathbf{ 0} \},
\end{equation}
with the following norms and semi-norms:
\begin{equation*}
\begin{split}
\label{H_norms}
\| \mathbf{ u} \|_{k,p,\widetilde{\Omega}} = \sum_{i=1}^2 \left( \| u_i \|_{k,p,\widetilde{\Omega}^-} + \| u_i \|_{k,p,\widetilde{\Omega}^+} \right), ~~~~ &\textrm{and} ~~~~
| \mathbf{ u} |_{k,p,\widetilde{\Omega}} = \sum_{i=1}^2 \left( | u_i |_{k,p,\widetilde{\Omega}^-} + | u_i |_{k,p,\widetilde{\Omega}^+} \right), \\
\| \mathbf{ u} \|_{k,\infty,\widetilde{\Omega}} =  \max_{i=1,2} \{ \max\{ \| u_i \|_{k,\infty,\widetilde{\Omega}^-} , \| u_i \|_{k,\infty,\widetilde{\Omega}^+} \} \}, ~~~~ &\textrm{and} ~~~~
| \mathbf{ u} |_{k,\infty,\widetilde{\Omega}} =  \max_{i=1,2} \{ \max\{ | u_i |_{k,\infty,\widetilde{\Omega}^-} , | u_i |_{k,\infty,\widetilde{\Omega}^+} \} \}.
\end{split}
\end{equation*}
Also we denote the corresponding Hilbert space $\mathbf{ PH}^k(\widetilde{\Omega})=\mathbf{ PW}^{k,2}(\widetilde{\Omega})$ with the norm $\|\cdot\|_{k,\widetilde{\Omega}}=\|\cdot\|_{k,2,\widetilde{\Omega}}$ and the semi-norm $|\cdot|_{k,\widetilde{\Omega}}=|\cdot |_{k,2,\widetilde{\Omega}}$. Furthermore, for any vector function $\mathbf{ v}=(v_1,v_2)^T\in\mathbf{ H}^1(\widetilde{\Omega})$, let $\nabla\mathbf{ v}$ be its $2$-by-$2$ Jacobian matrix where the $i$-th row is the row vector $\nabla v_i$, $i=1,2$.

Let $\mathcal{T}_h$ be a Cartesian rectangular or triangular mesh of the domain $\Omega$ with a mesh size $h>0$. An element $T\in \mathcal{T}_h$ is called an interface element if the intersection of the interior of $T$ with the interface $\Gamma$ is non-empty; otherwise, it is called a non-interface element. Let $\mathcal{T}^i_h$ and $\mathcal{T}^n_h$ be the sets of interface elements and non-interface elements, respectively. Similarly, let $\mathcal{E}^i_h$ and $\mathcal{E}^n_h$ be the sets of interface edges and non-interface edges, respectively. In addition, as in \cite{2016GuoLinZhang,2009HeLinLin}, we assume that $\mathcal{T}_h$ satisfies the following hypotheses when the mesh size $h$ is small enough:

\begin{itemize}[leftmargin=30pt]
  \item [\textbf{(H1)}] The interface $\Gamma$ cannot intersect an edge of any element at more than two points unless the edge is part of $\Gamma$.
  \item [\textbf{(H2)}] If $\Gamma$ intersects the boundary of an element at two points, these intersection points must be on different edges of this element.
  \item [\textbf{(H3)}] The interface $\Gamma$ is a piecewise $C^2$ function, and the mesh $\mathcal{T}_h$ is formed such that the subset of $\Gamma$ in every interface element $T\in\mathcal{T}^i_h$ is $C^2$.
  \item [\textbf{(H4)}] The interface $\Gamma$ is smooth enough so that $\textbf{PC}^2_{int}(T)$ is dense in $\textbf{PH}^2_{int}(T)$ for every interface element $T\in\mathcal{T}^i_h$.
\end{itemize}

We will discuss IFE spaces formed by linear polynomials on triangular meshes and bilinear or rotated $Q_1$ polynomials on rectangular meshes. For each element $T$ in a mesh $\mathcal{T}_h$, we introduce an index set $\mathcal{I}=\{1,2,3\}$ when $T$ is triangular or $\mathcal{I}=\{1,2,3,4\}$ when $T$ is rectangular. Then, the local finite element space is denoted by $(T,\mathbf{ \Pi}_T,\mathbf{ \Sigma}_T)$, with $\mathbf{ \Pi}_T=\left[ \textrm{Span}\{ 1,x,y \} \right]^2$, $\left[ \textrm{Span}\{ 1,x,y,xy \} \right]^2$ or $\left[ \textrm{Span}\{ 1,x,y,x^2-y^2 \} \right]^2$ for the linear, bilinear or rotated $Q_1$ polynomial space, respectively, and the local degrees of freedom $\mathbf{ \Sigma}_T= \{ \bfpsi_T(A_i)\,:\, i\in\mathcal{I}, ~ \bfpsi_T\in \mathbf{ \Pi}_T \}$, where $A_i$s are vertices of $T$ for the linear and bilinear cases, or midpoints of edges of $T$ for the rotated $Q_1$ case. For these finite element spaces, according to \cite{2008BrennerScott,1978Ciarlet, 1973CrouzeixRaviart,1992RanacherTurek}, there exist vector shape functions $\bfpsi_{i,T}\in \mathbf{ \Pi}_T$, $i=1,2,\cdots,2|\mathcal{I}|$ such that
$\mathbf{ \Pi}_T = \textrm{Span}\{\bfpsi_{i,T}, 1\leq i \leq 2|\mathcal{I}|\}$ with
\begin{equation}
\label{vec_bas}
\bfpsi_{i,T}(A_j)=
\left\{\begin{array}{cc}
\delta_{i,j},  \\
0,
\end{array}\right.
~i=1,\cdots,|\mathcal{I}|,
~~~~ \textrm{and} ~~~~
\bfpsi_{i,T}(A_j)=
\left\{\begin{array}{cc}
0,  \\
\delta_{i-|\mathcal{I}|,j},
\end{array}\right.
~ i=|\mathcal{I}|+1,\cdots,2|\mathcal{I}|,
\end{equation}
\begin{equation}
\label{fe_bound}
| \bfpsi_{i,T} |_{k,\infty,T} \leqslant Ch^{-k}, ~~k=0,1, 2.
\end{equation}
In addition, we will use a vectorization map \textrm{Vec}\,:\,$\mathbb{R}^{m\times n} \rightarrow \mathbb{R}^{mn\times 1}$ such that for any $A=(a_{ij})^{m,n}_{i=1,j=1}$,
$$
\textrm{Vec}(A):=(a_{11},\cdots,a_{m1},a_{12},\cdots,a_{m2},\cdots,a_{1n},\cdots,a_{mn})^T,
$$
and a Kronecker product $\otimes\,:\, \mathbb{R}^{m\times n}\times\mathbb{R}^{p\times q} \rightarrow \mathbb{R}^{mp\times nq}$ such that for any $A=(a_{ij})^{m,n}_{i=1,j=1}\in\mathbb{R}^{m\times n}$ and $B\in\mathbb{R}^{p\times q}$, there holds
\begin{equation}
\label{Kron_prod}
A \otimes  B =
\left[\begin{array}{ccc}
a_{11}B & \cdots & a_{1n}B \\
\vdots    & \ddots & \vdots \\
a_{m1}B & \cdots & a_{mn}B
\end{array}\right].
\end{equation}
A well-known formula \cite{2005AbadirMagnus} about the Kronecker product and the vectorization operation is the following:
\begin{equation}
\label{kro_vec}
\textrm{Vec}(CDE)=(E^T\otimes C)\textrm{Vec}(D).
\end{equation}
Throughout this article, we use the notation $I_{n}$ to denote the $n$-by-$n$ identity matrix and $0_{m\times n}$ to denote the $m$-by-$n$ zero matrix for any integer $m$ and $n$, and to simplify the presentation, we adopt the notation $\partial_{x_k}=\frac{\partial}{\partial_{x_k}}$, $k=1,2$, for partial derivatives with
$x_1 = x, ~x_2 = y$. Also, as usual, we will use $C$ to denote generic constants independent with the mesh size $h$ in all the discussions from now on.


\section{Geometric Properties of the Interface}
In this section, we derive a group of geometric properties on the interface elements for estimating interpolation errors of vector-valued functions.
These properties are extensions of those developed in \cite{2016GuoLin} for scalar functions. Let $T$ be an interface element and $l$ be a line connecting the intersection points of the interface $\Gamma$ with $\partial T$. Let $\mathbf{ n}(\widetilde{X})=(\tilde{n}_1(\widetilde{X}),\tilde{n}_2(\widetilde{X}))^T$ and $\mathbf{ t}(\widetilde{X})=(\tilde{n}_2(\widetilde{X}),-\tilde{n}_1(\widetilde{X}))^T$ be the normal and tangential vectors of $\Gamma$ at
a point $\widetilde{X}\in\Gamma\cap T$, respectively, and let the normal and tangential vectors of $l$ be $\bar{\mathbf{ n}}=(\bar{n}_1,\bar{n}_2)^T$ and $\bar{\mathbf{ t}}=(\bar{n}_2,-\bar{n}_1)^T$, respectively. Consider the following matrices:
\begin{eqnarray}
&N^s(\widetilde{X}) =&
\left[\begin{array}{cccc}
(\lambda^s + 2\mu^s)\tilde{n}_1(\widetilde{X}) & \mu^s \tilde{n}_2(\widetilde{X}) & \mu^s \tilde{n}_2(\widetilde{X}) & \lambda^s \tilde{n}_1(\widetilde{X}) \\
\lambda^s\tilde{n}_2(\widetilde{X}) & \mu^s\tilde{n}_1(\widetilde{X}) & \mu^s\tilde{n}_1(\widetilde{X}) & (\lambda^s + 2\mu^s)\tilde{n}_2(\widetilde{X}) \\
-\tilde{n}_2(\widetilde{X}) & 0 & \tilde{n}_1(\widetilde{X}) & 0 \\
0 & -\tilde{n}_2(\widetilde{X}) & 0 & \tilde{n}_1(\widetilde{X})
\end{array}\right], ~~~ s=\pm, \\  \label{Nbar_s}
&\overline{N}^s =&
\left[\begin{array}{cccc}
(\lambda^s + 2\mu^s)\bar{n}_1 & \mu^s \bar{n}_2 & \mu^s \bar{n}_2 & \lambda^s \bar{n}_1 \\
\lambda^s\bar{n}_2 & \mu^s\bar{n}_1 & \mu^s\bar{n}_1 & (\lambda^s + 2\mu^s)\bar{n}_2 \\
-\bar{n}_2 & 0 & \bar{n}_1 & 0 \\
0 & -\bar{n}_2 & 0 & \bar{n}_1
\end{array}\right], ~~~ s=\pm.  \label{N_s}
\end{eqnarray}
By straightforward calculation, we have
\begin{equation}
\label{M_det}
\textrm{Det}(N^s(\widetilde{X}))=\textrm{Det}(\overline{N}^s)=\mu^s(\lambda^s+2\mu^s), ~~ s=\pm.
\end{equation}
Hence both the matrices $N^s(\widetilde{X})$ and $\overline{N}^s$ are non-singular, and we can use them to define
\begin{equation}
\label{M}
M^-(\widetilde{X})= \left( N^+(\widetilde{X}) \right)^{-1}N^-(\widetilde{X}), ~~~~~~ M^+(\widetilde{X})= \left( N^-(\widetilde{X}) \right)^{-1}N^+(\widetilde{X}),
\end{equation}
\begin{equation}
\label{Mbar}
\overline{M}^-= \left( \overline{N}^+ \right)^{-1} \overline{N}^-, ~~~~~~ \overline{M}^+= \left( \overline{N}^-\right)^{-1}\overline{N}^+.
\end{equation}
These matrices have the following properties.

\begin{lemma}
\label{MM_relat}
For every $\mathbf{ u}\in \mathbf{ PC}^2_{int}(T)$ and $\widetilde{X}\in\Gamma\cap T$, there holds
\begin{equation}
\label{Mu}
\emph{Vec}(\nabla \mathbf{ u}^+(\widetilde{X})) = M^-(\widetilde{X})\emph{Vec}(\nabla \mathbf{ u}^-(\widetilde{X})), ~~~~~~
\emph{Vec}(\nabla \mathbf{ u}^-(\widetilde{X})) = M^+(\widetilde{X})\emph{Vec}(\nabla \mathbf{ u}^+(\widetilde{X})).
\end{equation}
\end{lemma}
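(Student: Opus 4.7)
The plan is to recognize that the four rows of $N^s(\widetilde{X})$ are precisely chosen so that the matrix $N^s(\widetilde{X})\,\text{Vec}(\nabla \mathbf{u}^s(\widetilde{X}))$ packages together the normal traction $\sigma^s(\mathbf{u}^s)\mathbf{n}$ (first two entries) and the tangential derivative $\nabla \mathbf{u}^s(\widetilde{X})\mathbf{t}$ (last two entries, up to a sign). Once this identification is made, the two jump conditions \eqref{jump_contin}--\eqref{jump_stress} that define $\mathbf{PC}^2_{int}(T)$ will force $N^+(\widetilde{X})\,\text{Vec}(\nabla \mathbf{u}^+) = N^-(\widetilde{X})\,\text{Vec}(\nabla \mathbf{u}^-)$ at any $\widetilde{X}\in\Gamma\cap T$, and the conclusion follows upon inversion, since the non-singularity of $N^\pm(\widetilde{X})$ is already guaranteed by \eqref{M_det}.

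First I would expand $\sigma^s(\mathbf{u}^s)\mathbf{n}$ using the definition \eqref{stress_tensor} with $\mathbf{n}=(\tilde n_1,\tilde n_2)^T$, collecting the four coefficients in front of $\partial_x u_1^s,\partial_x u_2^s,\partial_y u_1^s,\partial_y u_2^s$. A direct computation will show the first two components of the product line up exactly with the first two rows of $N^s(\widetilde{X})$ as written in the excerpt, so that
\[
\bigl(\sigma^s(\mathbf{u}^s)\mathbf{n}\bigr)_i \;=\; \bigl[N^s(\widetilde{X})\,\text{Vec}(\nabla \mathbf{u}^s(\widetilde{X}))\bigr]_i,\qquad i=1,2.
\]
Next, since $[\mathbf{u}]_{\Gamma}=\mathbf{0}$ and $\Gamma$ is $C^2$ near $\widetilde{X}$ by (H3), differentiating each component along the tangent direction gives $\nabla u_i^+(\widetilde{X})\cdot\mathbf{t}=\nabla u_i^-(\widetilde{X})\cdot\mathbf{t}$, i.e. $\nabla\mathbf{u}^+(\widetilde{X})\mathbf{t}=\nabla\mathbf{u}^-(\widetilde{X})\mathbf{t}$. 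A quick check with $\mathbf{t}=(\tilde n_2,-\tilde n_1)^T$ shows that the third and fourth rows of $N^s(\widetilde{X})$ evaluate $\text{Vec}(\nabla\mathbf{u}^s)$ to $-\nabla u_1^s\cdot\mathbf{t}$ and $-\nabla u_2^s\cdot\mathbf{t}$, respectively. Notice the rows $3$--$4$ are actually independent of $s$, so this piece matches automatically.

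Combining these two observations, both jump conditions read as the single equality of vectors
\[
N^+(\widetilde{X})\,\text{Vec}(\nabla \mathbf{u}^+(\widetilde{X})) \;=\; N^-(\widetilde{X})\,\text{Vec}(\nabla \mathbf{u}^-(\widetilde{X})).
\]
By \eqref{M_det} the matrices $N^\pm(\widetilde{X})$ are invertible, hence multiplying on the left by $(N^+)^{-1}$ and using the definition \eqref{M} yields the first identity in \eqref{Mu}; the second identity follows by the symmetric argument (multiplying by $(N^-)^{-1}$ instead).

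The only genuinely delicate point is the row-by-row verification that $N^s(\widetilde{X})$ is built precisely to assemble $(\sigma^s(\mathbf{u}^s)\mathbf{n},\,-\nabla\mathbf{u}^s\mathbf{t})^T$; this is a bookkeeping computation rather than a conceptual hurdle, but one must be careful with the ordering convention adopted for the $\text{Vec}$ operator in the excerpt (column-stacking, so that the components are $\partial_x u_1,\partial_x u_2,\partial_y u_1,\partial_y u_2$) to match the column layout of $N^s$. Once that convention is fixed consistently, the identities drop out mechanically.
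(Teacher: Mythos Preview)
Your proposal is correct and follows essentially the same approach as the paper: both identify the first two rows of $N^s(\widetilde{X})\,\text{Vec}(\nabla\mathbf{u}^s)$ with $\sigma^s(\mathbf{u}^s)\mathbf{n}$ via a direct computation, obtain the tangential-derivative continuity $\nabla u_i^+\cdot\mathbf{t}=\nabla u_i^-\cdot\mathbf{t}$ from $[\mathbf{u}]_\Gamma=\mathbf{0}$ to handle the last two rows, combine these into $N^+\,\text{Vec}(\nabla\mathbf{u}^+)=N^-\,\text{Vec}(\nabla\mathbf{u}^-)$, and then invert using \eqref{M_det} and \eqref{M}. Your explicit remark about the column-stacking convention for $\text{Vec}$ is a useful bookkeeping caution, but it does not change the argument.
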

\begin{proof}
To simplify the notations, we denote $\mathbf{ n}(\widetilde{X})=(\tilde{n}_1,\tilde{n}_2)^T$ in this proof. By direction calculations, we have
\begin{equation}
\label{Mu1}
\sigma^s(\mathbf{ u}(\widetilde{X})) \, \mathbf{ n}(\widetilde{X})=
\left[\begin{array}{c}
(\lambda^s + 2\mu^s)\tilde{n}_1 \partial_{ x_1} u_1 + \mu^s \tilde{n}_2 \partial_{ x_2} u_1 + \mu^s \tilde{n}_2 \partial_{ x_1} u_2 + \lambda^s \tilde{n}_1 \partial_{ x_2} u_2 \\
\lambda^s\tilde{n}_2 \partial_{x_1} u_1 + \mu^s\tilde{n}_1 \partial_{x_2} u_1 + \mu^s\tilde{n}_1 \partial_{x_1} u_2 + (\lambda^s + 2\mu^s)\tilde{n}_2 \partial_{x_1} u_2
\end{array}\right].
\end{equation}
From the continuity jump condition \eqref{jump_contin}, we have $\nabla u^+_i\, \mathbf{ t}(\widetilde{X}) = \nabla u^-_i\, \mathbf{ t}(\widetilde{X})$, $i=1,2$. Combining this with the stress jump condition \eqref{jump_stress} leads to
$
N^-(\widetilde{X}) \textrm{Vec}(\nabla \mathbf{ u}^-(\widetilde{X})) = N^+(\widetilde{X}) \textrm{Vec}(\nabla \mathbf{ u}^+(\widetilde{X}))
$
from which we have \eqref{Mu} because of \eqref{M}.
\commentout{
through the following matrix form:
Combining it with the matrix form for the continuity jump condition \eqref{jump_contin}, $\nabla u^+_i\cdot \mathbf{ t}(\widetilde{X}) = \nabla u^-_i\cdot \mathbf{ t}(\widetilde{X})$, $i=1,2$, we have
$
N^-(\widetilde{X}) \textrm{Vec}(\nabla \mathbf{ u}^-(\widetilde{X})) = N^+(\widetilde{X}) \textrm{Vec}(\nabla \mathbf{ u}^+(\widetilde{X}))
$
which yields \eqref{Mu} because of \eqref{M}.
}
\end{proof}

\begin{lemma}
\label{eigen_vec}
The vectors $\alpha_1=[-\bar{n}_2,0,\bar{n}_1,0]^T$ and $\alpha_2=[0,-\bar{n}_2,0,\bar{n}_1]^T$ are eigenvectors of $\left( \overline{M}^s \right)^T$, $s=+$, or $-$, such that
\begin{equation}
\label{M_eigen}
\left( \overline{M}^s \right)^T\alpha_i = \alpha_i, ~~ i=1,2.
\end{equation}
\end{lemma}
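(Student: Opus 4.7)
The plan is to exploit a simple structural feature of $\overline{N}^s$: the last two rows encode the tangential condition rather than the stress balance, so they do not involve the Lam\'e parameters. Specifically, inspecting \eqref{N_s}, the third and fourth rows of $\overline{N}^s$ are $(-\bar{n}_2,0,\bar{n}_1,0)$ and $(0,-\bar{n}_2,0,\bar{n}_1)$, which are precisely $\alpha_1^T$ and $\alpha_2^T$, and this expression is identical for $s=+$ and $s=-$.

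Translating this observation to columns of the transpose, if I let $e_3=(0,0,1,0)^T$ and $e_4=(0,0,0,1)^T$ be the standard basis vectors, then
\begin{equation*}
\alpha_i = (\overline{N}^s)^T e_{i+2}, \qquad i=1,2, \quad s=\pm,
\end{equation*}
where crucially the right-hand side uses the \emph{same} $e_{i+2}$ for both signs. Since $\overline{N}^s$ is non-singular by \eqref{M_det}, this is equivalent to $(\overline{N}^s)^{-T}\alpha_i = e_{i+2}$, an expression independent of $s$.

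From here the conclusion is immediate. Using $\overline{M}^s = (\overline{N}^{-s})^{-1}\overline{N}^s$ from \eqref{Mbar}, I compute
\begin{equation*}
(\overline{M}^s)^T \alpha_i = (\overline{N}^s)^T (\overline{N}^{-s})^{-T}\alpha_i = (\overline{N}^s)^T e_{i+2} = \alpha_i,
\end{equation*}
which is exactly \eqref{M_eigen}. The only nontrivial step is spotting the material-independence of the last two rows of $\overline{N}^s$; once that is in hand the proof is a one-line matrix manipulation, with no obstacles to speak of beyond bookkeeping of transposes.
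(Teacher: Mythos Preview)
Your argument is correct and is essentially the same approach as the paper's, which simply says the identities follow from direct calculations. Your version is a clean, structural execution of that calculation: by noting that rows three and four of $\overline{N}^s$ are parameter-free and equal $\alpha_1^T,\alpha_2^T$, you avoid multiplying out $(\overline{M}^s)^T\alpha_i$ entrywise. One cosmetic remark: the notation $\overline{N}^{-s}$ for the opposite sign is clear in intent but the paper writes this as $\overline{N}^{s'}$; otherwise nothing needs fixing.
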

\begin{proof}
The identities in \eqref{M_eigen} follow from direct calculations.
\end{proof}

As proved in the following lemma, the matrices $\overline{M}^s$ constructed on $l$ can be used to approximate the matrices $M^s$ constructed on the interface $\Gamma \cap T$, $s=+$ or $-$.

\begin{lemma}
There exists a constant $C$ independent of the interface location such that for every
interface element $T\in\mathcal{T}_h^i$ and every point $\widetilde{X}\in\Gamma\cap T$, $s=\pm$, we have
\begin{equation}
\label{M_bound}
\| M^s(\widetilde{X}) \| \leqslant C, ~~~~~~ \| \overline{M}^s \| \leqslant C,
\end{equation}
and
\begin{equation}
\label{M_aprox}
\| M^s(\widetilde{X}) - \overline{M}^s \| \leqslant Ch.
\end{equation}
\end{lemma}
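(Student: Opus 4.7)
\medskip

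\noindent\textbf{Proof proposal.} The plan is to reduce the lemma to two ingredients: a uniform bound on the inverses $(N^s)^{-1}$ and $(\overline{N}^s)^{-1}$, and a pointwise estimate $\|N^s(\widetilde X)-\overline{N}^s\|\leqslant Ch$ coming from the $C^2$-smoothness of $\Gamma$ within the element.

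First I would establish \eqref{M_bound}. Direct inspection shows that every entry of $N^s(\widetilde X)$ and $\overline{N}^s$ is a Lam\'e constant times a component of a unit vector, so $\|N^s(\widetilde X)\|,\|\overline{N}^s\|\leqslant C$ with $C$ depending only on $\lambda^\pm,\mu^\pm$. Combined with the determinant identity \eqref{M_det}, which gives $|\det N^s(\widetilde X)|=|\det \overline{N}^s|=\mu^s(\lambda^s+2\mu^s)>0$, Cramer's rule then yields $\|(N^s(\widetilde X))^{-1}\|,\|(\overline{N}^s)^{-1}\|\leqslant C$ uniformly in $\widetilde X$ and in the interface location. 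Multiplying these bounds according to the definitions \eqref{M} and \eqref{Mbar} delivers \eqref{M_bound}.

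For \eqref{M_aprox} I would use the algebraic decomposition
\begin{equation*}
M^s(\widetilde X)-\overline{M}^s
= (N^{-s}(\widetilde X))^{-1}\bigl(N^s(\widetilde X)-\overline{N}^s\bigr)
+ \bigl((N^{-s}(\widetilde X))^{-1}-(\overline{N}^{-s})^{-1}\bigr)\overline{N}^s,
\end{equation*}
together with the identity
\begin{equation*}
(N^{-s}(\widetilde X))^{-1}-(\overline{N}^{-s})^{-1}
= (N^{-s}(\widetilde X))^{-1}\bigl(\overline{N}^{-s}-N^{-s}(\widetilde X)\bigr)(\overline{N}^{-s})^{-1}.
\end{equation*}
In view of the bounds already obtained for $(N^s)^{-1}$, $(\overline{N}^s)^{-1}$ and $\overline{N}^s$, the claim reduces to
$\|N^s(\widetilde X)-\overline{N}^s\|\leqslant Ch$ for every $\widetilde X\in\Gamma\cap T$ and $s=\pm$.

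The remaining task is the geometric estimate $|\tilde n_i(\widetilde X)-\bar n_i|\leqslant Ch$, $i=1,2$, which is the main (and only nontrivial) obstacle. Under hypotheses \textbf{(H1)}--\textbf{(H3)}, the subarc $\Gamma\cap T$ is a $C^2$ curve of length $O(h)$ connecting the two endpoints of the chord $l$; parametrising by arclength and applying Taylor's theorem to the unit tangent (equivalently, to the unit normal) about each endpoint, the tangent varies by $O(h)$ along $\Gamma\cap T$, and by the mean value theorem the chord's unit direction agrees with the tangent at some interior point. Combining these two $O(h)$ estimates yields $|\tilde n_i(\widetilde X)-\bar n_i|\leqslant Ch$ with a constant determined by the $C^2$-bound on $\Gamma$ (this is exactly the vector analogue of the scalar geometric lemma from \cite{2016GuoLin}). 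Since every entry of $N^s-\overline{N}^s$ is a fixed Lam\'e constant multiple of some $\tilde n_i(\widetilde X)-\bar n_i$, this at once gives $\|N^s(\widetilde X)-\overline{N}^s\|\leqslant Ch$ and completes \eqref{M_aprox}.
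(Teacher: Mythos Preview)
Your proposal is correct and follows essentially the same route as the paper's own proof. The paper argues identically: it bounds $\|N^s\|$ and $\|(N^s)^{-1}\|$ using the unit-length of the normals together with the determinant formula \eqref{M_det}, writes the same algebraic decomposition (your two displayed identities combine into the paper's single line $(N^+)^{-1}(N^--\overline{N}^-)+(N^+)^{-1}(\overline{N}^+-N^+)(\overline{N}^+)^{-1}\overline{N}^-$), and then invokes the geometric estimate $\|\mathbf{n}(\widetilde X)-\bar{\mathbf{n}}\|\leqslant Ch$ by citing Lemma~3.2 of \cite{2016GuoLin}; your sketch of that estimate via arclength parametrisation and the mean value theorem is precisely the content of that cited lemma.
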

\begin{proof}
We only prove the case for $s=-$, and the argument for $s=+$ is similar. Since $\| \bar{\mathbf{ n}} \|=1$ and $\|\mathbf{ n}(\widetilde{X}) \| = 1$, we have $\| \overline{N}^- \| \leqslant C$ and $\| N^-(\widetilde{X}) \|\leqslant C$. Besides, we note that
$$
\| \left( N^+(\widetilde{X}) \right)^{-1} \| = \frac{1}{\textrm{Det}(N^+(\widetilde{X}))} \| \textrm{adj}(N^+(\widetilde{X})) \| \leqslant C, ~~~~\textrm{and} ~~~~ \| \left( \overline{N}^+ \right)^{-1} \| = \frac{1}{\textrm{Det}(\overline{N}^+) } \| \textrm{adj}(\overline{N}^+) \| \leqslant C,
$$
because $\textrm{Det}(N^-(\widetilde{X}))=\textrm{Det}(\overline{N}^-)=\mu^-(\lambda^-+2\mu^-)$ and each term of the adjugate matrices is bounded by some constants $C$. Then, \eqref{M_bound} follows from applying these estimates in the inequalities below:
$$
\| M^-(\widetilde{X}) \| \leqslant \| \left( N^+(\widetilde{X}) \right)^{-1} \|  ~ \| N^-(\widetilde{X}) \| ~~~~\textrm{and} ~~~~ \| \overline{M}^- \| \leqslant \| \left( \overline{N}^+ \right)^{-1} \|  ~ \| \overline{N}^- \|.
$$
For \eqref{M_aprox}, we note that
\begin{equation*}
\begin{split}
\| M^-(\widetilde{X}) - \overline{M}^- \| &= \| \left( N^+(\widetilde{X}) \right)^{-1} N^-(\widetilde{X})  -  \left( \overline{N}^+ \right)^{-1} \overline{N}^- \| \\
&= \| \left( N^+(\widetilde{X}) \right)^{-1} \left( N^-(\widetilde{X}) - \overline{N}^- \right) +  \left( N^+(\widetilde{X}) \right)^{-1} \left( \overline{N}^+ - N^+(\widetilde{X}) \right) \left( \overline{N}^+ \right)^{-1}  \overline{N}^-  \| \\
&\leqslant C \| N^-(\widetilde{X}) - \overline{N}^- \| + C \|  \overline{N}^+ - N^+(\widetilde{X}) \| \\
&\leqslant Ch
\end{split}
\end{equation*}
in which we have used the estimate $\| \mathbf{ n}(\widetilde{X}) - \bar{\mathbf{ n}} \| \leqslant Ch$ given in Lemma 3.2 of \cite{2016GuoLin}.
\end{proof}

In addition, we consider the following set
\begin{equation}
\label{T_int}
T_{int} = \bigcup \Big\{ l_t\cap T: l_t~\text{is a tangent line to}~\Gamma\cap T \Big\},
\end{equation}
which is the subelement swept by the tangent lines to $\Gamma\cap T$. We note that this set is equivalent to the one considered in \cite{2016GuoLinZhang}. Following an idea similar to that used in \cite{2016GuoLin}, we can show that
this is actually a small set in the following lemma.

\begin{lemma}
\label{T_int_area}
Assume $h$ is small enough, then there exists a constant $C$ independent of the interface location, such that $|T_{int}|\leqslant Ch^3$.
\end{lemma}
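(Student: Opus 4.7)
The plan is to realize $T_{int}$ as a thin strip around the chord $\bar{l}$ connecting the two intersection points of $\Gamma$ with $\partial T$, of thickness $O(h^2)$ and length $O(h)$, so that $|T_{int}|\leqslant C h^3$.

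First I would set up coordinates adapted to the chord: place the origin at one endpoint of $\bar{l}$, align the $\bar{x}$-axis with $\bar{\mathbf{t}}$, so that $\bar{l}$ coincides with a segment $\{(\bar{x},0):0\leqslant \bar{x}\leqslant |\bar{l}|\}$ with $|\bar{l}|\leqslant Ch$. Using hypothesis (H3), parametrize $\Gamma\cap T$ locally in this frame as a graph $\bar{y}=\varphi(\bar{x})$ with $\varphi\in C^2$ and $\varphi=0$ at both endpoints; the standard estimates $\|\varphi\|_{\infty}\leqslant Ch^2$ and $\|\varphi'\|_{\infty}\leqslant Ch$ then follow (this is exactly Lemma 3.2 of \cite{2016GuoLin}, which is already cited and used in the preceding lemma). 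In particular, the unit tangent $\mathbf{t}(\widetilde{X})$ at every $\widetilde{X}\in\Gamma\cap T$ satisfies $\|\mathbf{t}(\widetilde{X})-\bar{\mathbf{t}}\|\leqslant Ch$.

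Next I would estimate the perpendicular distance from an arbitrary point $Y\in T_{int}$ to the chord $\bar{l}$. By definition of $T_{int}$, $Y$ lies on some tangent line $l_{t}$ to $\Gamma\cap T$ at a point $\widetilde{X}$, with $\widetilde{X},Y\in T$, so $\|Y-\widetilde{X}\|\leqslant \operatorname{diam}(T)\leqslant Ch$. Writing $Y=\widetilde{X}+s\,\mathbf{t}(\widetilde{X})$ with $|s|\leqslant Ch$ and projecting onto $\bar{\mathbf{n}}$, I get
\begin{equation*}
|(Y-P_{\bar{l}}Y)\cdot \bar{\mathbf{n}}|\leqslant |(\widetilde{X}-P_{\bar{l}}\widetilde{X})\cdot\bar{\mathbf{n}}|+|s|\,|\mathbf{t}(\widetilde{X})\cdot\bar{\mathbf{n}}|\leqslant Ch^2+Ch\cdot Ch = Ch^2,
\end{equation*}
where the first term is bounded by $\|\varphi\|_{\infty}$ and the second by $\|\mathbf{t}(\widetilde{X})-\bar{\mathbf{t}}\|$ times $|s|$. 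Hence $T_{int}$ is contained in the strip $S$ of points of $T$ whose perpendicular distance to $\bar{l}$ is at most $Ch^2$.

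Finally I would bound $|S|$. Tangential to $\bar{l}$, $T_{int}\subset T$ has extent at most $\operatorname{diam}(T)\leqslant Ch$; in the normal direction, the strip has width at most $2Ch^2$. Thus
\begin{equation*}
|T_{int}|\leqslant |S|\leqslant Ch\cdot Ch^2\leqslant Ch^3,
\end{equation*}
which proves the lemma. I do not expect a serious obstacle: the two ingredients (curve-chord $O(h^2)$ distance and tangent-chord $O(h)$ angle) are already encoded in Lemma 3.2 of \cite{2016GuoLin} under hypothesis (H3), and the rest is the elementary Taylor/strip argument above. The only care needed is to ensure that the local graph representation covers the whole of $\Gamma\cap T$, which is guaranteed for $h$ small enough by hypotheses (H1)–(H3).
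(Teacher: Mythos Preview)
Your proposal is correct and follows essentially the same approach as the paper: both show that every point of $T_{int}$ lies within perpendicular distance $O(h^2)$ of the chord $l$ by combining the $O(h^2)$ curve--chord distance with the $O(h)$ tangent--chord angle (both from Lemma~3.2 of \cite{2016GuoLin}), and then conclude via the strip estimate. The only cosmetic difference is that you work in a graph frame $\bar y=\varphi(\bar x)$ and bound $|\mathbf{t}(\widetilde{X})\cdot\bar{\mathbf{n}}|$ via $\|\varphi'\|_\infty$, whereas the paper bounds $\sin\theta$ directly using the normal-vector estimate $\|\mathbf{n}(\widetilde{X})-\bar{\mathbf{n}}\|\leqslant Ch$ and tracks the curvature $\kappa$ explicitly in the constants.
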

\begin{proof}
Let $\kappa$ be the maximal curvature of $\Gamma\cap T$. By the assumption, we can follow the idea in \cite{2016GuoLin} to assume $h$ is sufficiently small such $\kappa h\leqslant \epsilon$ for some $\epsilon\in(0,1/2)$. Let $D$ and $E$ be the intersection points of $\Gamma$ and $\partial T$, and recall $l$ is the line connecting $D$ and $E$. Let $X$ be one point in $T_{int}$. According to the definition \eqref{T_int}, there exists a point $Y\in \Gamma$ such that $\overline{XY}$ is tangent to $\Gamma$. Denote $X_{\bot}$ and $Y_{\bot}$ as the projection of $X$ and $Y$ onto $l$, respectively. Let $\theta\in [0,\pi/2]$ be the angle between $\overline{XY}$ and $l$. According to Lemma 3.2 in \cite{2016GuoLin}, we have $|\overline{YY_{\bot}}|\leqslant 2(1-2\epsilon^2)^{-3/2}\kappa h^2$. Using the fact $|\overline{XY}|\leqslant Ch$ and simply geometry, we obtain $|\overline{XX_{\bot}}|=|\overline{YY_{\bot}}|+|\overline{XY}|\sin(\theta)\leqslant Ch^2+Ch\sin(\theta)$. In addition, using (3.5b) in \cite{2016GuoLin}, there holds
\begin{equation}
\begin{split}
\label{T_int_area_eq1}
\sin(\theta)&=(1-(\bar{\mathbf{ n}}\cdot\mathbf{ n}(Y))^2)^{1/2} \\
&\leqslant (1+(1-2\epsilon^2)^{-3/2}) \kappa h ( 4 - (1+(1-2\epsilon^2)^{-3/2})^2 \kappa^2 h^2 )^{1/2} \\
&\leqslant (1+(1-2\epsilon^2)^{-3/2}) ( 4 - (1+(1-2\epsilon^2)^{-3/2})^2 \epsilon^2 )^{1/2} \kappa h
\end{split}
\end{equation}
where we have used $h\kappa\leqslant\epsilon$. It shows that $|\overline{XX_{\bot}}|\leqslant C(1+ \kappa) h^2$ with $C$ depending on $\epsilon$, i.e., the distance between $X$ and $\overline{DE}$ is bounded by $C(1+\kappa) h^2$. Since $|\overline{DE}|\leqslant Ch$, we have $|T_{int}|\leqslant C(1+\kappa) h^3$.
\end{proof}

\commentout{

given some prescribed parameters $\epsilon\in(0,\sqrt{2}/2)$ and $\bar{\kappa}\in(0,1]$, we assume the mesh size $h$ is small enough such that
\begin{equation}
\label{mesh_assump}
h < \min\Bigg\{\frac{\sqrt{\bar{\kappa}}}{\sqrt{2}(1+(1-2\epsilon^2)^{-3/2})\kappa} , \frac{\epsilon}{\kappa} \Bigg\}.
\end{equation}
Under the assumption \eqref{mesh_assump}, we can estimate the measure of $T_{int}$.
\begin{lemma}
\label{T_int_area}
Assume the $h$ satisfies \eqref{mesh_assump}, then there exists a constant $C$ independent of the interface location, such that $|T_{int}|\leqslant Ch^3$, where $C$ depends on the maximal curvature $\kappa$.
\end{lemma}
\begin{proof}
Let $D$ and $E$ be the intersection points of $\Gamma$, and $\partial T$ and recall $l$ is the line connecting $D$ and $E$. Let $X$ be one point in $T_{int}$. According to the definition \eqref{T_int}, there exists a point $Y\in \Gamma$ such that $\overline{XY}$ is tangent to $\Gamma$. Denote $X_{\bot}$ and $Y_{\bot}$ as the projection of $X$ and $Y$ onto $l$, respectively. Let $\theta\in [0,\pi/2]$ be the angle between $\overline{XY}$ and $l$. Based on simply geometry, we have $|\overline{XX_{\bot}}|=|\overline{YY_{\bot}}|+|\overline{XY}|\sin(\theta)\leqslant Ch^2+Ch\sin(\theta)$, since $|\overline{YY_{\bot}}|\leqslant 2(1-2\epsilon^2)^{-3/2}\kappa h^2$ and $|\overline{XY}|\leqslant Ch$ according to Lemma 3.2 in \cite{2016GuoLin}. In addition, using (3.5b) in \cite{2016GuoLin}, there holds
\begin{equation}
\begin{split}
\label{T_int_area_eq1}
\sin(\theta)&=(1-(\bar{\mathbf{ n}}\cdot\mathbf{ n}(Y))^2)^{1/2} \\
&\leqslant (1+(1-2\epsilon^2)^{-3/2}) \kappa h \left( 4 - (1+(1-2\epsilon^2)^{-3/2})^2 \kappa^2 h^2 \right)^{1/2} \\
&\leqslant (1+(1-2\epsilon^2)^{-3/2}) \kappa h (4 - \bar{\kappa}/2)^{1/2} \\
&\leqslant C \kappa h,
\end{split}
\end{equation}
where we have used the assumption \eqref{mesh_assump}. It shows that $|\overline{XX_{\bot}}|\leqslant C(1+ \kappa) h^2$, i.e., the distance between $X$ and $\overline{DE}$ is bounded by $C(1+\kappa) h^2$. Since $|\overline{DE}|\leqslant Ch$, we have $|T_{int}|\leqslant C\kappa h^3$.
\end{proof}
}


\section{Multi-point Taylor Expansion}
In this section, we present a multi-point Taylor expansion for the piecewise smooth vector functions satisfying \eqref{jump_contin}-\eqref{jump_stress} on interface elements and derive the estimates of the remainders in the expansion. The multi-point Taylor expansion idea was first employed in \cite{2004LiLinLinRogers} for showing the approximation capabilities of the linear IFE spaces for the elliptic interface problems through the Lagrange type interpolation operator. Similar ideas was then used to study approximation capabilities for the bilinear IFE spaces \cite{2009HeTHESIS,2008HeLinLin} and nonconforming IFE spaces \cite{2016GuoLinZhang,2013ZhangTHESIS}. Recently, by generating this technique, the authors in \cite{2016GuoLin} developed a unified framework to show the approximation capabilities of various IFE spaces, and we now extend this technique to IFE spaces of vector functions
for solving the elasticity interface problems.

In the following discussion, for every $T\in\mathcal{T}^i_h$, let $\Gamma$ partition $T$ into $T^{\pm}$ and let $l$ partition $T$ into $\overline{T}^{\pm}$. Define $\widetilde{T}=(T^+\cap\overline{T}^-)\cup(T^-\cap\overline{T}^+)$ which is the subelement sandwiched by $\Gamma$ and $l$. From \cite{2004LiLinLinRogers}, we know $|\widetilde{T}|\leqslant Ch^3$. In addition, as in \cite{2016GuoLinZhang}, for every $X\in T\backslash T_{int}$, the segment $\overline{A_iX}$  intersects with $\Gamma\cap T$ either at only one point when $A_i$ and $X$ are on different sides of $\Gamma\cap T$ or no point when $A_i$ and $X$ are on the same side. Then, we define $T_{\ast}^s= \overline{T}^s \cap (T^s\backslash T_{int})$, $s=\pm$, and let $T_{\ast}=T\backslash(T^-_{\ast}\cup T^+_{\ast})$.

We further partition $\mathcal{I}$ into two sub index sets $\mathcal{I}^+=\{i\,:\,A_i\in T^+ \}$ and $\mathcal{I}^-=\{i\,:\,A_i\in T^- \}$. For every $X\in T$, we let $Y_i(t,X)=tA_i+(1-t)X$, $t\in [0,1]$, $i\in\mathcal{I}$. Let $\tilde{t}_i=\tilde{t}_i(X)\in [0,1]$ be such that $\widetilde{Y}_i=Y_i(\tilde{t}_i,X)$ is on the curve $\Gamma\cap T$ if $X$ and $A_i$ are on different sides of $T$. We start from the following theorem that gives the expansion of $\mathbf{ u}(A_i)$ about $X$ if $A_i$ and $X$ are the same side of $\Gamma$, i.e., $A_i\in T^s$ and $X\in T^s_{\ast}$, $s=\pm$.
\begin{thm}
For every interface element $T\in\mathcal{T}^i_h$ and $\mathbf{ u}\in\mathbf{ PC}^2_{int}(T)$, assume $A_i \in T^s,~ s = \pm$, then
\begin{equation}
\label{expan_1}
\begin{split}
\mathbf{ u}^s(A_i)
=\mathbf{ u}^s(X)+ \left( (A_i-X)^T\otimes I_2 \right) \emph{Vec}( \nabla\mathbf{ u}^s(X) )+  \mathbf{ R}^s_{i}(X), ~ i\in\mathcal{I}^s, ~ \forall X\in T_{\ast}^s,
\end{split}
\end{equation}
\begin{equation}
\label{expan_1_rem}
\text{where} ~~ \mathbf{ R}^s_i(X)=\int_0^1(1-t)\frac{d^2}{dt^2} \mathbf{ u}^s(Y_i(t,X))dt, ~i\in \mathcal{I}^s, ~\forall X \in T_{\ast}^s.
\end{equation}
\end{thm}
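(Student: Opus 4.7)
The statement is essentially the classical one-dimensional Taylor expansion with integral remainder applied componentwise along the segment from $X$ to $A_i$, rewritten in Kronecker--vectorization form. The plan is to (i) justify that the path stays inside the smooth region, (ii) apply the scalar Taylor formula to each component, and (iii) repackage the first-order term via the identity \eqref{kro_vec}.

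First I would verify the geometric prerequisite for using a smooth Taylor expansion. By the definition recalled just before the theorem, $X \in T^s_{\ast} \subseteq T \setminus T_{int}$, so the text's observation applies: the segment $\overline{A_i X}$ intersects $\Gamma \cap T$ at no point whenever $A_i$ and $X$ lie on the same side of $\Gamma$, which is exactly the hypothesis $A_i \in T^s$ and $X \in T^s_{\ast}$. Hence the parametrized path $Y_i(t,X) = tA_i + (1-t)X$, $t \in [0,1]$, lies entirely in $\overline{T^s}$, where $\mathbf{u}^s \in \mathbf{C}^2$ by the definition of $\mathbf{PC}^2_{int}(T)$. This allows the use of the classical Taylor formula without worrying about the jump across $\Gamma$.

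Next I would apply, for each component $j=1,2$, the one-dimensional Taylor expansion with integral remainder to $\varphi_j(t) := u^s_j(Y_i(t,X))$, which is $C^2$ on $[0,1]$. Writing $\varphi_j(1) = \varphi_j(0) + \varphi_j'(0) + \int_0^1 (1-t)\varphi_j''(t)\,dt$ and using $\varphi_j'(0) = \nabla u^s_j(X) \cdot (A_i - X)$ gives, stacking the two components,
\begin{equation*}
\mathbf{u}^s(A_i) = \mathbf{u}^s(X) + \nabla \mathbf{u}^s(X)\,(A_i - X) + \mathbf{R}^s_i(X),
\end{equation*}
with $\mathbf{R}^s_i(X)$ exactly as defined in \eqref{expan_1_rem}. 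The middle term is a $2\times 1$ column vector, since $\nabla \mathbf{u}^s(X)$ is the $2\times 2$ Jacobian and $A_i - X$ is $2\times 1$.

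Finally I would rewrite the linear term using the Kronecker--vectorization identity \eqref{kro_vec}, $\mathrm{Vec}(CDE) = (E^T \otimes C)\,\mathrm{Vec}(D)$. Taking $C = I_2$, $D = \nabla \mathbf{u}^s(X)$, $E = A_i - X$, and noting that the vectorization of a column vector is the vector itself, we obtain
\begin{equation*}
\nabla \mathbf{u}^s(X)\,(A_i - X) = \mathrm{Vec}\bigl(I_2\,\nabla \mathbf{u}^s(X)\,(A_i - X)\bigr) = \bigl((A_i - X)^T \otimes I_2\bigr)\,\mathrm{Vec}(\nabla \mathbf{u}^s(X)),
\end{equation*}
which yields \eqref{expan_1}. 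The proof is essentially bookkeeping; the only point that deserves attention is the geometric step that keeps the integration path in the $C^2$ region, and this is precisely the reason for having introduced the subset $T^s_{\ast}$ before stating the theorem.
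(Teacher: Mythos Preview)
Your proposal is correct and follows essentially the same approach as the paper: justify that the segment $Y_i(t,X)$ remains in $T^s$, apply the standard componentwise Taylor expansion with integral remainder to obtain $\mathbf{u}^s(A_i)=\mathbf{u}^s(X)+\nabla\mathbf{u}^s(X)(A_i-X)+\mathbf{R}^s_i(X)$, and then rewrite the linear term via \eqref{kro_vec} with $C=I_2$, $D=\nabla\mathbf{u}^s(X)$, $E=A_i-X$. Your write-up is slightly more explicit about the geometric step, but there is no substantive difference.
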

\begin{proof}
Since $A_i \in T^s$ and $X\in T_{\ast}^s$, we know that $Y_i(t, X) \in T^s,~\forall t \in [0, 1]$. Applying the standard Taylor expansion with integral remainder
to the components of $\bfu(X) = (u_1(X), u_2(X))^T$, we have
\begin{equation}
\label{expan_1_1}
\begin{split}
\mathbf{ u}^s(A_i)
=&\mathbf{ u}^s(X)+ \nabla\mathbf{ u}^s(X)(A_i-X) +  \mathbf{ R}^s_{i}(X), ~ i\in\mathcal{I}^s,~\forall X\in T_{\ast}^s, s=\pm.
\end{split}
\end{equation}
Then, we obtain \eqref{expan_1} by applying the vectorization on each side of \eqref{expan_1_1} and using the formula \eqref{kro_vec} with $C=I_2$, $E=A_i-X$ and $D=\nabla \mathbf{ u}^s(X)$.
\end{proof}


In the discussions from now on, we denote $s=\pm$ and $s'=\mp$, which means $s$ and $s'$ always take opposite signs when they appear in the same formula. And in the following theorem, we describe how to expand $\mathbf{ u}(A_i)$ about $X$ if they are the different sides of $\Gamma$, i.e., $A_i\in T^s$ but $X\in T^{s'}_{\ast}$.

\begin{thm}
On every interface element $T\in\mathcal{T}^i_h$ and $\mathbf{ u}\in\mathbf{ PC}^2_{int}(T)$, assume $A_i\in T^{s'}$, then
\begin{equation}
\begin{split}
\label{expan_2}
\mathbf{ u}^{s'}(A_i)=&\mathbf{ u}^{s}(X)+((A_i-X)^T \otimes I_2) \emph{Vec}\left( \nabla\mathbf{ u}^{s}(X) \right)\\
& + ((A_i-\widetilde{Y}_i)^T \otimes I_2) (M^s - I_4) \emph{Vec}\left( \nabla\mathbf{ u}^{s}(X) \right)
+ \mathbf{ R}^s_{i}(X), ~ i\in \mathcal{I}^{s'}, ~ \forall X\in T_{\ast}^s,~ s=\pm,
\end{split}
\end{equation}
where $\mathbf{ R}^s_{i}=\mathbf{ R}^s_{i1}+\mathbf{ R}^s_{i2}+\mathbf{ R}^s_{i3}$, with 
\begin{equation}
\begin{cases}
\label{expan_2_rem}
&\mathbf{ R}_{i1}^{s}(X)=\int_{0}^{\tilde{t}_i}(1-t)\frac{d^2}{dt^2}\mathbf{ u}^s(Y_i(t,X))dt, \\ 
&\mathbf{ R}_{i2}^{s}(X)=\int_{\tilde{t}_i}^{1}(1-t)\frac{d^2}{dt^2}\mathbf{ u}^{s'}(Y_i(t,X))dt, \\ 
&\mathbf{ R}_{i3}^{s}(X)=(1-\tilde{t}_i)((A_i - X)^T\otimes I_2) (M^s(\widetilde{Y}_i)-I_4)\int_{0}^{\tilde{t}_i}\frac{d}{dt} \emph{Vec} \left( \nabla \mathbf{ u}^s(Y_i(t,X)) \right) dt. 
\end{cases}
\end{equation}
\end{thm}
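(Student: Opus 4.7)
The plan is to integrate along the parameterized segment $Y_i(t,X)=tA_i+(1-t)X$, which crosses $\Gamma$ exactly once at $t=\tilde{t}_i$, split the integral at $\tilde{t}_i$, and use the two jump conditions to stitch the $s$-side and $s'$-side pieces together. Concretely, write $F^s(t)=\mathbf{u}^s(Y_i(t,X))$ for $t\in[0,\tilde{t}_i]$ and $F^{s'}(t)=\mathbf{u}^{s'}(Y_i(t,X))$ for $t\in[\tilde{t}_i,1]$. The continuity condition \eqref{jump_contin} gives $F^s(\tilde{t}_i)=F^{s'}(\tilde{t}_i)$, so the fundamental theorem of calculus on each piece yields
\begin{equation*}
\mathbf{u}^{s'}(A_i)-\mathbf{u}^s(X)=\int_0^{\tilde{t}_i}(F^s)'(t)\,dt+\int_{\tilde{t}_i}^1 (F^{s'})'(t)\,dt.
\end{equation*}

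Now I would integrate by parts on each subinterval using the weight $(1-t)$, which is the standard device to produce a Taylor-with-integral-remainder expansion. The endpoint at $t=1$ contributes nothing because $1-t$ vanishes there; the endpoint at $t=0$ supplies the first-order term $(F^s)'(0)=\nabla\mathbf{u}^s(X)(A_i-X)$; and the two endpoints at $\tilde{t}_i$ do not cancel because $F^s$ and $F^{s'}$ have mismatched first derivatives there, leaving the correction $(1-\tilde{t}_i)\bigl[(F^{s'})'(\tilde{t}_i)-(F^s)'(\tilde{t}_i)\bigr]$. Applying $\mathrm{Vec}$ to the whole identity and using \eqref{kro_vec} with $C=I_2$ turns the first-order term into $\bigl((A_i-X)^T\otimes I_2\bigr)\mathrm{Vec}(\nabla\mathbf{u}^s(X))$ and turns the correction into
\begin{equation*}
(1-\tilde{t}_i)\bigl((A_i-X)^T\otimes I_2\bigr)\Bigl(\mathrm{Vec}(\nabla\mathbf{u}^{s'}(\widetilde{Y}_i))-\mathrm{Vec}(\nabla\mathbf{u}^s(\widetilde{Y}_i))\Bigr).
\end{equation*}

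Next I invoke Lemma \ref{MM_relat} at the point $\widetilde{Y}_i$ to replace $\mathrm{Vec}(\nabla\mathbf{u}^{s'}(\widetilde{Y}_i))$ by $M^s(\widetilde{Y}_i)\mathrm{Vec}(\nabla\mathbf{u}^s(\widetilde{Y}_i))$, collapsing the difference into $(M^s-I_4)\mathrm{Vec}(\nabla\mathbf{u}^s(\widetilde{Y}_i))$. Using the geometric identity $A_i-\widetilde{Y}_i=(1-\tilde{t}_i)(A_i-X)$, the factor $(1-\tilde{t}_i)(A_i-X)^T$ becomes $(A_i-\widetilde{Y}_i)^T$. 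Finally, expanding $\mathrm{Vec}(\nabla\mathbf{u}^s(\widetilde{Y}_i))=\mathrm{Vec}(\nabla\mathbf{u}^s(X))+\int_0^{\tilde{t}_i}\tfrac{d}{dt}\mathrm{Vec}(\nabla\mathbf{u}^s(Y_i(t,X)))\,dt$ splits this correction into the desired main term $\bigl((A_i-\widetilde{Y}_i)^T\otimes I_2\bigr)(M^s-I_4)\mathrm{Vec}(\nabla\mathbf{u}^s(X))$ plus the residual $\mathbf{R}^s_{i3}(X)$, while the two integration-by-parts remainders give $\mathbf{R}^s_{i1}(X)$ and $\mathbf{R}^s_{i2}(X)$.

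The main obstacle is purely bookkeeping: one has to be careful that the integration by parts is performed on the two half-intervals separately (so that the kink in $F$ at $\tilde{t}_i$ is respected), that the boundary term at $\tilde{t}_i$ is not discarded, and that the vectorization identity \eqref{kro_vec} is applied consistently so that $(M^s-I_4)$ premultiplies $\mathrm{Vec}(\nabla\mathbf{u}^s(X))$ from the correct side. Once these are handled, each of $\mathbf{R}^s_{i1}$, $\mathbf{R}^s_{i2}$, $\mathbf{R}^s_{i3}$ reads off directly from the derivation, giving \eqref{expan_2} and \eqref{expan_2_rem}.
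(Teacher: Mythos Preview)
Your proposal is correct and follows essentially the same approach as the paper's proof: split the line integral at the interface crossing $\tilde{t}_i$, integrate by parts with the weight $(1-t)$ on each subinterval to generate $\mathbf{R}^s_{i1}$, $\mathbf{R}^s_{i2}$ and the mismatch boundary term at $\tilde{t}_i$, apply Lemma~\ref{MM_relat} to rewrite that mismatch via $M^s(\widetilde{Y}_i)-I_4$, and then expand $\mathrm{Vec}(\nabla\mathbf{u}^s(\widetilde{Y}_i))$ about $X$ to peel off $\mathbf{R}^s_{i3}$. The paper merely writes the intermediate boundary terms out as $-\nabla\mathbf{u}^s(\widetilde{Y}_i)(A_i-\widetilde{Y}_i)+\nabla\mathbf{u}^{s'}(\widetilde{Y}_i)(A_i-\widetilde{Y}_i)$ before vectorizing, whereas you package them as $(1-\tilde{t}_i)\bigl[(F^{s'})'(\tilde{t}_i)-(F^s)'(\tilde{t}_i)\bigr]$; the content is identical.
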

\begin{proof}
Without loss of generality, we only discuss the case $A_i\in T^+$ and $X\in T_{\ast}^-$. Following a procedure similar to that used in
\cite{2004LiLinLinRogers}, we have
\begin{equation}
\label{expan_2_1}
\begin{split}
\mathbf{ u}^+(A_i)=&\mathbf{ u}^-(X)+\int^{\tilde{t}_i}_0\frac{d}{dt}\mathbf{ u}^-(Y_i(t,X))dt+\int^{1}_{\tilde{t}_i}\frac{d}{dt}\mathbf{ u}^+(Y_i(t,X))dt\\
=&\mathbf{ u}^-(X)+ \nabla\mathbf{u}^-(X)(A_i-X)- \nabla\mathbf{u}^-(\widetilde{Y}_i)(A_i-\widetilde{Y}_i)+ \nabla\mathbf{u}^+(\widetilde{Y}_i)(A_i-\widetilde{Y}_i)\\
&+\int^{\tilde{t}_i}_0(1-t)\frac{d^2}{dt^2}\mathbf{ u}^-(Y_i(t,X))dt+\int^{1}_{\tilde{t}_i}(1-t)\frac{d^2}{dt^2}\mathbf{ u}^+(Y_i(t,X))dt,
\end{split}
\end{equation}
where the last two terms are actually $\mathbf{ R}^-_{i1}$ and $\mathbf{ R}^-_{i2}$. For the second and the third term on the right hand side of
\eqref{expan_2_1}, by applying \eqref{kro_vec}, we have
\begin{equation}
\begin{split}
\label{expan_2_2}
&\nabla\mathbf{ u}^-(X)(A_i-X)=((A_i-X)^T\otimes I_2) \textrm{Vec}(\nabla\mathbf{ u}^-(X)), \\
&\nabla\mathbf{ u}^-(\widetilde{Y}_i)(A_i-\widetilde{Y}_i)=((A_i-\widetilde{Y}_i)^T\otimes I_2) \textrm{Vec}(\nabla\mathbf{ u}^-(\widetilde{Y}_i)).
\end{split}
\end{equation}
For the fourth term on the right hand side of
\eqref{expan_2_1}, by applying \eqref{kro_vec} and Lemma \ref{MM_relat}, we have
\begin{equation}
\begin{split}
\label{expan_2_3}
\nabla\mathbf{ u}^+(\widetilde{Y}_i)(A_i-\widetilde{Y}_i)&=((A_i-\widetilde{Y}_i)^T\otimes I_2) \textrm{Vec}(\nabla\mathbf{ u}^+(\widetilde{Y}_i)) \\
&=(1-\tilde{t}_i)((A_i - X)^T\otimes I_2) M^-(\widetilde{Y}_i) \textrm{Vec}(\nabla\mathbf{ u}^-(\widetilde{Y}_i)).
\end{split}
\end{equation}
Moreover we note that
\begin{equation}
\begin{split}
\label{expan_2_4}
\nabla \mathbf{ u}^-(\widetilde{Y}_i) &= \int^{\tilde{t}_i}_0 \frac{d}{dt} \nabla \mathbf{ u}^-(Y_i(t,x)) dt + \nabla\mathbf{ u}^-(X).
\end{split}
\end{equation}
Finally, expansion \eqref{expan_2} follows from substituting \eqref{expan_2_4}, \eqref{expan_2_3} and \eqref{expan_2_2} into \eqref{expan_2_1}.
\end{proof}

For $X\in T_{\ast}$, we consider another group of expansion which only involves the first derivative of $\mathbf{ u}$.
\begin{thm}
\label{T_tild_exp}
On every interface element $T\in\mathcal{T}^i_h$, $\mathbf{ u}\in\mathbf{ PC}^2_{int}(T)$, for each $X\in T_{\ast}$, we have
 \begin{equation}
\label{T_tild_exp_eq}
\mathbf{ u}(A_i) = \mathbf{ u}(X) + \widetilde{\mathbf{R}}_i(X),  ~~~~ \emph{with} ~~\widetilde{\mathbf{R}}_i(X) = \int^1_0\frac{d}{dt}\mathbf{ u}(Y_i(t,X))dt.
\end{equation}
\end{thm}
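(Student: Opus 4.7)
The plan is to apply the fundamental theorem of calculus to the one-variable function $g(t) := \mathbf{u}(Y_i(t,X))$ on $[0,1]$, exploiting only the continuity jump condition $[\mathbf{u}]_\Gamma = \mathbf{0}$ from the definition of $\mathbf{PC}^2_{int}(T)$. Note that, unlike the previous two expansion theorems, this one makes no use of the stress jump condition because no derivative information about $\mathbf{u}$ appears on the right-hand side of \eqref{T_tild_exp_eq}.

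First I would describe the geometry. Because $X\in T_{\ast}$, the open segment $\overline{A_iX}$ may in principle cross $\Gamma\cap T$ several times; however, by hypothesis \textbf{(H3)} the arc $\Gamma\cap T$ is a $C^2$ curve, so a line segment meets it at only finitely many parameter values $0< t_{i,1}<t_{i,2}<\cdots <t_{i,k}<1$. Between two consecutive crossings the point $Y_i(t,X)=tA_i+(1-t)X$ lies entirely in one of $T^+$ or $T^-$, where $\mathbf{u}$ is $C^2$, so $g$ is $C^1$ (indeed $C^2$) on each open subinterval $(t_{i,j},t_{i,j+1})$, with classical derivative $g'(t) = \nabla\mathbf{u}(Y_i(t,X))(A_i-X)$.

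Second I would invoke continuity to glue the pieces. Since $[\mathbf{u}]_\Gamma=\mathbf{0}$, the map $\mathbf{u}$ is continuous across $\Gamma$, and consequently $g$ is continuous on the entire closed interval $[0,1]$. Applying the fundamental theorem of calculus on each subinterval and telescoping,
\begin{equation*}
\mathbf{u}(A_i)-\mathbf{u}(X)=g(1)-g(0)=\sum_{j=0}^{k}\bigl(g(t_{i,j+1})-g(t_{i,j})\bigr)=\sum_{j=0}^{k}\int_{t_{i,j}}^{t_{i,j+1}}g'(t)\,dt=\int_{0}^{1}\frac{d}{dt}\mathbf{u}(Y_i(t,X))\,dt,
\end{equation*}
where $t_{i,0}:=0$ and $t_{i,k+1}:=1$, and the finitely many points where $g'$ fails to exist (the crossings, plus possibly $t=0$ or $t=1$ if $X$ or $A_i$ lies on $\Gamma$) form a null set in $[0,1]$ and thus do not affect the integral. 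This yields \eqref{T_tild_exp_eq}.

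The only real obstacle is bookkeeping for the possibly multiple crossings of $\overline{A_iX}$ with $\Gamma$; once finiteness is secured by \textbf{(H3)}, the argument is a routine piecewise application of the fundamental theorem of calculus, and no Kronecker-product machinery or matrices $M^s$ are needed. If a more streamlined exposition is preferred, one can instead observe that $g$ is absolutely continuous on $[0,1]$ (being continuous and piecewise $C^1$ with bounded derivative) and immediately integrate $g'$ from $0$ to $1$.
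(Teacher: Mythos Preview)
Your proof is correct and follows the same approach the paper indicates: the paper's own proof simply states that the result follows from arguments in \cite{2016GuoLinZhang} relying only on the continuity of $\mathbf{u}$, which is precisely the piecewise fundamental theorem of calculus argument you have written out in detail. Your handling of possibly multiple crossings of $\overline{A_iX}$ with $\Gamma$ via \textbf{(H3)} is a welcome elaboration, since for $X\in T_{\ast}\supseteq T_{int}$ one cannot assume a single intersection point as in the earlier theorems.
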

\begin{proof}
The proof follows from a straightforward application of the same arguments used in \cite{2016GuoLinZhang} that only relies on the
continuity of $\mathbf{ u}$.
\end{proof}

We proceed to estimate the remainders in \eqref{expan_1_rem} and \eqref{expan_2_rem} in terms of the Hilbert norms associated with $\mathbf{ PH}^2_{int}(T)$. For every scalar function $u$, let $\nabla^2 u$ be its Hessian matrix. Then we note that
\begin{equation}
\label{Hessian_1}
\frac{d^2}{dt^2}\mathbf{ u}(Y_i(t,X)) =
\left[\begin{array}{c}
(A_i-X)^T\nabla^2u_1~(A_i-X) \\
(A_i-X)^T\nabla^2u_2~(A_i-X)
\end{array}\right],
\end{equation}
\begin{equation}
\label{Hessian_2}
\frac{d}{dt}\left(\nabla\mathbf{ u}(Y_i(t,X)) \right) =
\left[\begin{array}{c}
(A_i-X)^T\nabla^2u_1 \\
(A_i-X)^T\nabla^2u_2
\end{array}\right].
\end{equation}
Therefore we have
\begin{lemma}
\label{lem_rem_est_1}
Let $\mathbf{ u}\in\mathbf{ PC}^2_{int}(T)$, there exist constants $C>0$ independent of interface location such that
\begin{equation}
\begin{split}
\label{rem_est_1}
&\| \mathbf{ R}^s_i \|_{0,T_{\ast}^s}\leqslant Ch^2 |\mathbf{ u}|_{2,T}, ~~ i\in\mathcal{I}^s, ~s=\pm, \\
&\| \mathbf{ R}^s_{i1} \|_{0,T_{\ast}^s}\leqslant Ch^2 |\mathbf{ u}|_{2,T}, ~~ \| \mathbf{ R}^s_{i2} \|_{0,T_{\ast}^s}\leqslant Ch^2 |\mathbf{ u}|_{2,T}, ~~ i\in\mathcal{I}^{s'}, ~s=\pm.
\end{split}
\end{equation}
\end{lemma}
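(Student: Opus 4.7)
The plan is to apply the classical Li--Lin--Lin--Rogers technique from \cite{2004LiLinLinRogers}: combine Cauchy--Schwarz applied to each line-integral remainder, the pointwise bound on the second directional derivative coming from \eqref{Hessian_1}, and a Fubini-plus-change-of-variable step whose Jacobian exactly cancels the weight produced by Cauchy--Schwarz.

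Concretely, I would start with $\mathbf{R}^s_i$ for $i\in\mathcal{I}^s$ and $X\in T^s_\ast$. Since $A_i\in T^s$ and by the definition of $T^s_\ast$ the whole segment $\{Y_i(t,X):t\in[0,1]\}$ stays in $T^s$, using \eqref{Hessian_1} together with $|A_i-X|\leq Ch$ gives the pointwise bound
$$\left|\frac{d^2}{dt^2}\mathbf{u}^s(Y_i(t,X))\right|\leq Ch^2\,|\nabla^2\mathbf{u}^s(Y_i(t,X))|.$$
Then I apply Cauchy--Schwarz in the form $|\int_0^1 1\cdot(1-t)\,g(t)\,dt|^2\leq \int_0^1(1-t)^2|g(t)|^2\,dt$ (this split is essential) to obtain
$$|\mathbf{R}^s_i(X)|^2\leq Ch^4\int_0^1(1-t)^2\,|\nabla^2\mathbf{u}^s(Y_i(t,X))|^2\,dt.$$
Integrate over $T^s_\ast$, swap order of integration by Fubini, and for each fixed $t$ apply the change of variables $Y=tA_i+(1-t)X$ with Jacobian $(1-t)^2$, i.e.\ $dX=(1-t)^{-2}\,dY$. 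The image of $T^s_\ast$ is contained in $T^s$, and the $(1-t)^2$ weight exactly cancels the $(1-t)^{-2}$ from the Jacobian, yielding $\|\mathbf{R}^s_i\|^2_{0,T^s_\ast}\leq Ch^4|\mathbf{u}|^2_{2,T^s}\leq Ch^4|\mathbf{u}|^2_{2,T}$, as desired.

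The estimates on $\mathbf{R}^s_{i1}$ and $\mathbf{R}^s_{i2}$ with $i\in\mathcal{I}^{s'}$ are handled by the same scheme after inserting the indicators $\chi_{[0,\tilde{t}_i(X)]}$ or $\chi_{[\tilde{t}_i(X),1]}$ under the Cauchy--Schwarz. The point of the definition of $T^s_\ast$ (together with $\tilde{t}_i$ being the unique $\Gamma$-crossing) is that when $t\in[0,\tilde{t}_i(X)]$ one has $Y_i(t,X)\in T^s$, while for $t\in[\tilde{t}_i(X),1]$ one has $Y_i(t,X)\in T^{s'}$. Thus after the change of variable the relevant region maps into $T^s$ for $\mathbf{R}^s_{i1}$ (bounded by $|\mathbf{u}|^2_{2,T^s}$) and into $T^{s'}$ for $\mathbf{R}^s_{i2}$ (bounded by $|\mathbf{u}|^2_{2,T^{s'}}$), both dominated by $|\mathbf{u}|^2_{2,T}$.

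The chief technical point is the $(1-t)^{-2}$ Jacobian blow-up at $t=1$: the argument works only because the Cauchy--Schwarz split is chosen to produce a matching $(1-t)^2$ factor. A more cosmetic issue is the $X$-dependence of the integration bounds $\tilde{t}_i$ for $\mathbf{R}^s_{i1}$ and $\mathbf{R}^s_{i2}$, which is handled by the indicator-function trick so that Fubini and the subsequent change of variables apply cleanly without having to track the geometry of $\{X:t\leq\tilde{t}_i(X)\}$.
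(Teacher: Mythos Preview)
Your proposal is correct and follows essentially the same approach as the paper: both use \eqref{Hessian_1} together with $|A_i-X|\leq Ch$, swap the order of integration, and rely on the change of variables $Y=tA_i+(1-t)X$ whose Jacobian $(1-t)^{-2}$ is cancelled by the $(1-t)^2$ weight (the paper delegates this last step to Lemma~4.1 of \cite{2016GuoLin}). The only cosmetic difference is that the paper invokes Minkowski's integral inequality to pull the $t$-integral outside the $L^2$ norm whereas you use a pointwise Cauchy--Schwarz followed by Fubini; these are interchangeable here and lead to the same bound.
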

\begin{proof}
Let $\mathbf{ R}^s_i=(R^{1s}_i,R^{2s}_i)^T$, then according to \eqref{Hessian_1}, using Minkowski inequality and the fact $\|A_i-X \|\leqslant h$, we have
\begin{equation*}
\begin{split}
R^{js}_i(X)&= \left( \int_{T_{\ast}^s}\left( \int^1_0 (1-t)(A_i-X)^T\nabla^2 u^s_j(Y_i(t,X))(A_i-X)dt \right)^2 dX \right)^{\frac{1}{2}}\\
&\leqslant Ch^2 \int^1_0 \left( \int_{T_{\ast}^s}(1-t)^2\sum_{k,l=1}^2 | \partial_{x_kx_l} u^s_j |^2 \right)^{\frac{1}{2}}  dt \leqslant Ch^2 |u_j|_{2,T}, ~~~ j=1,2,
\end{split}
\end{equation*}
where we have used arguments similar to those used for the Lemma 4.1 in \cite{2016GuoLin}, and these estimates lead to
the first estimate in \eqref{rem_est_1}. The derivations for the estimates of $\mathbf{ R}^s_{i1}$ and $\mathbf{ R}^s_{i2}$ are similar.
\end{proof}

\begin{lemma}
\label{lem_rem_est_2}
Let $\mathbf{ u}\in\mathbf{ PC}^2_{int}(T)$, there exist constants $C>0$ independent of interface location such that
\begin{equation}
\label{rem_est_2}
\| \mathbf{ R}^s_{i3} \|_{0,T_{\ast}^s}\leqslant Ch^2 | \mathbf{ u} |_{2,T}, ~ i\in \mathcal{I}^{s'}, s=\pm.
\end{equation}
\end{lemma}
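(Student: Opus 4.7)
The plan is to bound $\mathbf{R}^s_{i3}$ as a product of four factors, extracting one power of $h$ from $\|A_i-X\|$ and a second from the integral of $\tfrac{d}{dt}\textrm{Vec}(\nabla\mathbf{u}^s(Y_i(t,X)))$, with the remaining factors $|1-\tilde t_i|$ and $\|M^s(\widetilde Y_i)-I_4\|$ contributing only $O(1)$ bounds. The bound on the matrix factor comes from the triangle inequality together with $\|M^s(\widetilde Y_i)\|\le C$ (the first half of \eqref{M_bound}) and $\|I_4\|\le C$.

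Concretely, I would apply Minkowski's integral inequality to move the $L^2(T_\ast^s)$ norm inside the $t$-integral. Using identity \eqref{Hessian_2}, each entry of $\tfrac{d}{dt}\textrm{Vec}(\nabla\mathbf{u}^s(Y_i(t,X)))$ has the form $(A_i-X)^T\nabla^2 u^s_j(Y_i(t,X))$, $j=1,2$, whose pointwise Euclidean norm is bounded by $Ch\bigl(\sum_{k,l}|\partial_{x_kx_l}u^s_j(Y_i(t,X))|^2\bigr)^{1/2}$. The change-of-variables argument $X\mapsto Y_i(t,X)$ used in the proof of Lemma~\ref{lem_rem_est_1} (itself imported from Lemma~4.1 of \cite{2016GuoLin}) then bounds the resulting $L^2(T_\ast^s)$ norm uniformly in $t\in[0,\tilde t_i]$ by $Ch|\mathbf{u}|_{2,T}$, and integrating in $t$ preserves this bound. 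Combining with the pointwise factors $|1-\tilde t_i|\le 1$, $\|(A_i-X)^T\otimes I_2\|=\|A_i-X\|\le Ch$, and $\|M^s(\widetilde Y_i)-I_4\|\le C$ yields $\|\mathbf{R}^s_{i3}\|_{0,T_\ast^s}\le Ch^2|\mathbf{u}|_{2,T}$.

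The only point that requires care is that $M^s(\widetilde Y_i)$ is \emph{not} itself close to the identity: the $O(h)$ estimate \eqref{M_aprox} compares $M^s$ with $\overline M^s$, not with $I_4$, so no extra power of $h$ may be extracted from the matrix factor. Both powers of $h$ must therefore come from the $\|A_i-X\|$ outside the integral and the Hessian inside it, which is why \eqref{Hessian_2} rather than \eqref{Hessian_1} is the right identity to invoke here. Beyond this bookkeeping remark, the argument does not require any new tool beyond those already used in Lemma~\ref{lem_rem_est_1}.
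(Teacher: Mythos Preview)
Your outline contains a genuine gap. You bound the factor $|1-\tilde t_i|$ by $1$ and then claim that the change-of-variables argument from Lemma~\ref{lem_rem_est_1} (Lemma~4.1 of \cite{2016GuoLin}) bounds $\|\tfrac{d}{dt}\textrm{Vec}(\nabla\mathbf u^s(Y_i(t,\cdot)))\|_{0,T_\ast^s}$ \emph{uniformly in $t$}. It does not. The map $X\mapsto Y_i(t,X)=tA_i+(1-t)X$ has Jacobian $(1-t)^2$, so after Minkowski and change of variables one picks up
\[
\Bigl(\int_{T_\ast^s}\bigl|\partial_{x_kx_l}u_j(Y_i(t,X))\bigr|^2\,dX\Bigr)^{1/2}\;\le\;(1-t)^{-1}\,|u_j|_{2,T}.
\]
Integrating this in $t$ up to $\sup_X\tilde t_i(X)$ gives $-\ln\bigl(1-\sup_X\tilde t_i(X)\bigr)$, which is \emph{not} bounded independently of the interface location: when $\Gamma$ passes close to the vertex $A_i$, $\tilde t_i(X)$ can be arbitrarily close to $1$ for generic $X\in T_\ast^s$, and the constant blows up. This is exactly the situation the lemma must cover.

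The paper's remedy is precisely to keep the factor $(1-\tilde t_i(X))$ and use the elementary inequality $0\le 1-\tilde t_i(X)\le 1-t$ for $t\in[0,\tilde t_i(X)]$ to move it \emph{inside} the $t$-integral as a factor $(1-t)$. After Minkowski this becomes $(1-t)^2$ in the spatial integral, which exactly cancels the Jacobian and yields a bound uniform in interface location. Everything else in your plan---the use of \eqref{Hessian_2} to extract the second power of $h$, the boundedness $\|M^s(\widetilde Y_i)-I_4\|\le C$ from \eqref{M_bound}, and your observation that no extra $h$ can be squeezed from the matrix factor---is correct and matches the paper.
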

\begin{proof}
Let $\mathbf{ R}^s_{i3}=(R^{1s}_{i3},R^{2s}_{i3})^T$. Using \eqref{Hessian_2}, \eqref{M_bound}, the fact $\|A_i-X \|\leqslant h$ and $0\leqslant 1-\tilde{t}_i(X)\leqslant 1-t$, we have
$$
\| R^{js}_{i3} \|_{0,T_{\ast}^s}\leqslant Ch^2 \left( \int_{T^s_{\ast}} \left( \int^{\tilde{t}_i}_0 (1-t) \sum_{k,l=1}^2 \sum_{j=1}^2 |\partial_{ x_k  x_l} u_j| dt \right)^2 dX \right)^{\frac{1}{2}}.
$$
Then, applying the Minkowski inequality and Lemma 4.1 in \cite{2016GuoLin} to the inequality above yields
\begin{equation*}
\begin{split}
\| R^{js}_{i3} \|_{0,T_{\ast}^s} 
\leqslant Ch^2 \int^{\tilde{t}_i}_0 \left( \sum_{k,l=1}^2 \sum_{j=1}^2 \int_{T^s_{\ast}} (1-t)^2 |\partial_{x_k x_l} u_j|^2 dX \right)^{\frac{1}{2}} dt \leqslant Ch^2 (|u_1|_{2,T}+|u_2|_{2,T}),
\end{split}
\end{equation*}
from which \eqref{rem_est_2} readily follows.
\end{proof}

In addition, since $\mathbf{ u}\in \left[ H^{2}(T^s) \right]^2$, the Sobolev embedding theorem indicates $\mathbf{ u}\in \left[ W^{1,6}(T^s) \right]^2$, $s=\pm$. Therefore we can bound the remainder $\widetilde{\mathbf{ R}}_i$ in \eqref{T_tild_exp_eq} in terms of $W^{1,6}$-norm.

\begin{lemma}
\label{lem_T_tild_exp_est}
There exists a constant $C$ independent of the interface location such that when $h$ is small enough there holds
\begin{equation}
\label{T_tild_exp_est}
\| \widetilde{\mathbf{ R}}_i \|_{0,T_{\ast}} \leqslant Ch^2 \| \mathbf{ u} \|_{1,6,T}.
\end{equation}
\end{lemma}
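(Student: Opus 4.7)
The plan is to start from the definition
$\widetilde{\mathbf R}_i(X)=\int_0^1\frac{d}{dt}\mathbf u(Y_i(t,X))\,dt=\int_0^1\nabla\mathbf u(Y_i(t,X))(A_i-X)\,dt$
and square, integrate over $T_{\ast}$, and reduce everything to the fact that $|T_{\ast}|\le Ch^3$ (which follows from $|T_{\ast}|\le|T_{int}|+|\widetilde T|$ together with Lemma \ref{T_int_area} and $|\widetilde T|\le Ch^3$). Since a plain $L^2$-bound on $\nabla\mathbf u$ would involve only $|\mathbf u|_{2,T}$ and would not exploit the smallness of $T_{\ast}$, I will bring $L^6$ into play via H\"older's inequality so that I can trade $|T_{\ast}|$ for an extra factor $h^2$, which is precisely what lets me improve from the $h^{3/2}$ that a direct estimate gives to the desired $h^2$.

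First, I would use $|A_i-X|\le h$ and Minkowski's integral inequality to obtain
$$\|\widetilde{\mathbf R}_i\|_{0,T_{\ast}}\le h\int_0^1\Bigl(\int_{T_{\ast}}|\nabla\mathbf u(Y_i(t,X))|^2\,dX\Bigr)^{1/2}dt.$$
Next, I would apply H\"older's inequality with exponents $3$ and $3/2$ to the inner integral:
$$\int_{T_{\ast}}|\nabla\mathbf u(Y_i(t,X))|^2\,dX\le |T_{\ast}|^{2/3}\Bigl(\int_{T_{\ast}}|\nabla\mathbf u(Y_i(t,X))|^6\,dX\Bigr)^{1/3},$$
so that $|T_{\ast}|^{2/3}\le Ch^2$.

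Then I would perform the affine change of variables $Y=Y_i(t,X)=tA_i+(1-t)X$, with Jacobian $(1-t)^2$, which maps $T_{\ast}$ into the convex hull of $A_i$ and $T_{\ast}$, hence into $T$. Because $\mathbf u\in\mathbf{PC}^2_{int}(T)\subset\mathbf{PW}^{1,6}(T)$ by Sobolev embedding in two dimensions, I get
$$\int_{T_{\ast}}|\nabla\mathbf u(Y_i(t,X))|^6\,dX=\frac{1}{(1-t)^2}\int_{Y_i(t,T_{\ast})}|\nabla\mathbf u(Y)|^6\,dY\le\frac{C}{(1-t)^2}\|\mathbf u\|_{1,6,T}^{6},$$
where the integral on the right is understood piecewise over $T^\pm$ in the sense of the $\mathbf{PW}^{1,6}$ norm. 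Putting these pieces together yields
$$\|\widetilde{\mathbf R}_i\|_{0,T_{\ast}}\le Ch^2\|\mathbf u\|_{1,6,T}\int_0^1\frac{dt}{(1-t)^{1/3}},$$
and the last integral is finite, which gives \eqref{T_tild_exp_est}.

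The main obstacle is the apparent blow-up of the Jacobian as $t\to1$: the affine map $X\mapsto Y_i(t,X)$ collapses $T_{\ast}$ to the point $A_i$. Choosing the $L^6$ scale rather than $L^2$ is essential here, since after H\"older the singularity appears as $(1-t)^{-2/3}$ inside a square root, leaving only the integrable singularity $(1-t)^{-1/3}$; a naive $L^2$ bound would produce a non-integrable $(1-t)^{-1}$, which is why the statement is formulated in the $W^{1,6}$ norm and requires the Sobolev embedding that is available only in two dimensions.
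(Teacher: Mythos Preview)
Your proof is correct and follows essentially the same route as the paper: the paper establishes $|T_{\ast}|\le Ch^3$ from $T_{\ast}=\widetilde{T}\cup T_{int}$, writes $\widetilde{R}^j_i(X)=\int_0^1\nabla u_j(Y_i(t,X))(A_i-X)\,dt$, and then defers the remaining estimate to ``arguments similar to those used for Lemma 3.2 in \cite{2016GuoLinZhang}'' --- which is precisely the Minkowski/H\"older/change-of-variables computation you have carried out, including the integrable singularity $(1-t)^{-1/3}$. The only cosmetic issue is that $|A_i-X|\le Ch$ rather than $\le h$ on rectangular elements, which does not affect the argument.
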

\begin{proof}
We note that $T_{\ast} = \widetilde{T} \cup T_{int}$, and it is a small set
such that $|T_{\ast}|\leqslant Ch^3$ when the mesh is fine enough because
we know that $\abs{\widetilde{T}} \leq Ch^3$ \cite{2004LiLinLinRogers} and $|T_{\ast}|\leqslant Ch^3$ by Lemma \ref{T_int_area}.
Note that $\widetilde{\mathbf{ R}}_i=(\widetilde{R}^1_i,\widetilde{R}^2_i)^T$, and, by using \eqref{Hessian_2}, we have
$$
\widetilde{R}^j_i(X)=\int^1_0 \nabla u_j(Y_i(t,X))~(A_i-X)dt, ~~~ j=1,2.
$$
Then, applying arguments similar to those used for Lemma 3.2 in \cite{2016GuoLinZhang} and using the fact $|T_{\ast}|\leqslant Ch^3$, we have $\| \widetilde{R}^j_i \|_{0,T_{\ast}}\leqslant Ch^2 \|u_j\|_{1,6,T}$ for $j=1,2$ from which \eqref{T_tild_exp_est} follows.
\end{proof}


\section{Construction of IFE Spaces}
In this section, we construct local IFE spaces corresponding to their related finite element spaces $(T,\mathbf{ \Pi}_T,\mathbf{ \Sigma}_T)$ described in
Section \ref{sec:preliminaries}. As usual the local IFE space on every non-interface element $T$ is the standard vector polynomial space, i.e.,
\begin{equation}
\label{non_inter_ife_sp}
\mathbf{ S}_h(T)=\textrm{Span}\{ \bfpsi_{i,T}, \,\bfpsi_{i + \abs{\mathcal{I}},T}:\, i \in \mathcal{I} \},
\end{equation}
where $\bfpsi_{i,T}$ are given by \eqref{vec_bas}.
We note that a procedure to construct the local IFE spaces formed by piecewise linear polynomials on interface elements
was discussed in \cite{2010GongLi,2012LinZhang}, and a similar procedure was presented in \cite{2012LinZhang} for
the local IFE spaces formed by piecewise bilinear polynomials. However according to the example presented in \cite{2012LinZhang}, the linear system for determining a IFE shape function in these procedures can be singular in some cases. We now propose a new procedure so that the bilinear or the roated $Q_1$ IFE shape functions on every interface element can always be uniquely determined by the local degrees of freedom $\mathbf{ \Sigma}_T$.

\subsection{Local IFE Spaces}

Without loss of generality, we consider a typical interface element $T\in\mathcal{T}^i_h$ with $A_1=(0,0)^T$, $A_2=(h,0)^T$, $A_3=(0,h)^T$
when linear polynomials are discussed on a triangular $T$, $A_1=(0,0)^T$, $A_2=(h,0)^T$, $A_3=(0,h)^T$, $A_4=(h,h)^T$ for the bilinear case on
a rectangular $T$, and $A_1=(h/2,0)^T$, $A_2=(h,h/2)^T$, $A_3=(h/2,h)^T$, $A_4=(0,h/2)^T$ for the rotated $Q_1$ case on a a rectangular $T$.
According to \cite{2016GuoLin}, by considering rotation, there are only two possible cases of the interface configuration for the linear and bilinear cases, and 5 possible cases for the rotated $Q_1$ case, as illustrated in Figures \ref{fig:subfig_linear}-\ref{fig:subfig_nc}.
\begin{figure}[H]
\begin{minipage}[h]{0.5\textwidth}
\centering
\subfigure[Case 1]{
    \label{tri_case1} 
    \includegraphics[width=1.5in]{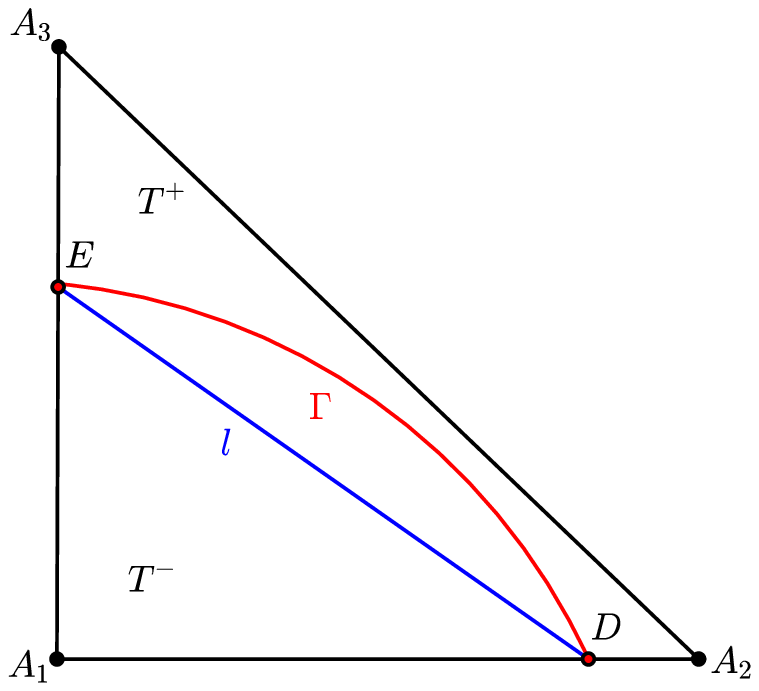}}
  \subfigure[Case 2]{
    \label{tri_case2} 
    \includegraphics[width=1.5in]{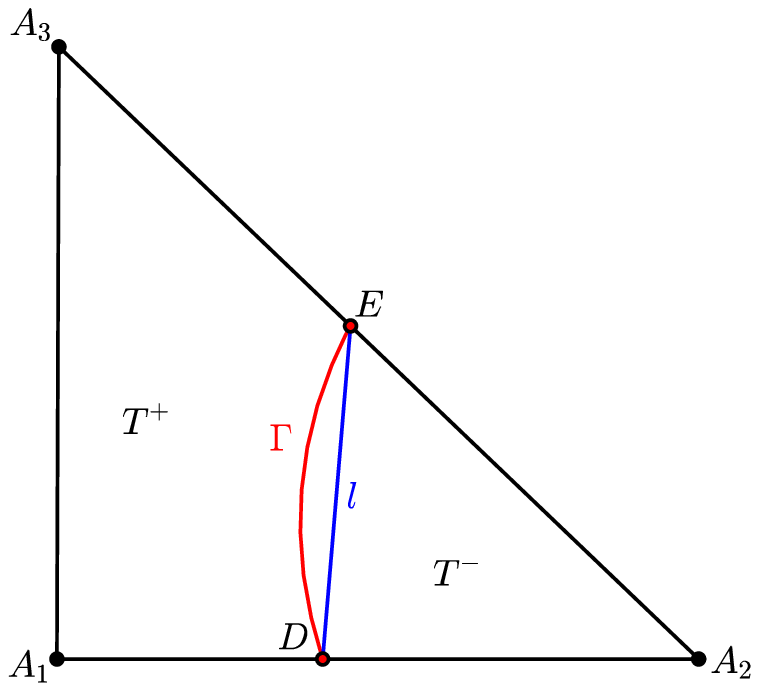}}
  \caption{Typical linear elements}
  \label{fig:subfig_linear} 
\end{minipage}
\begin{minipage}[h]{0.5\textwidth}
\centering
\subfigure[Case 1]{
    \label{case1} 
    \includegraphics[width=1.5in]{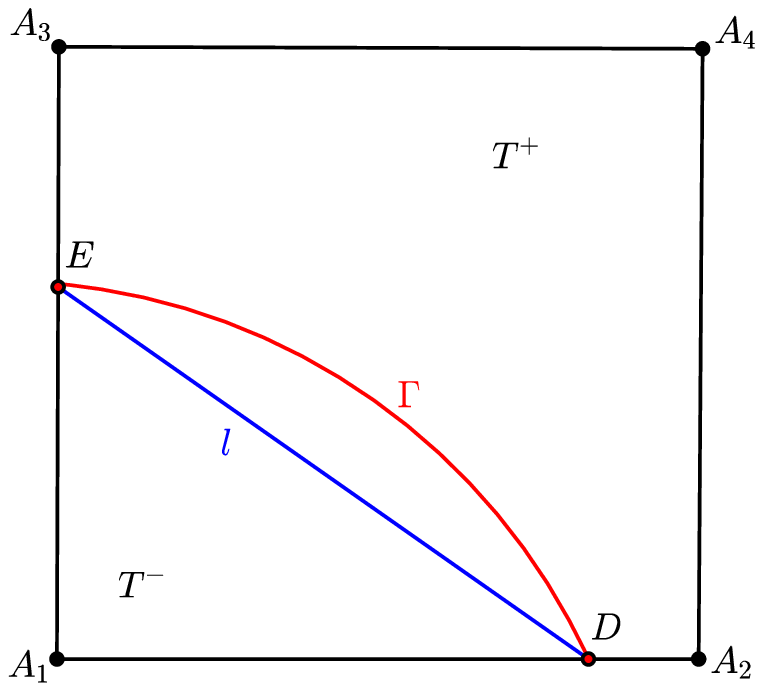}}
  \subfigure[Case 2]{
    \label{case2} 
    \includegraphics[width=1.5in]{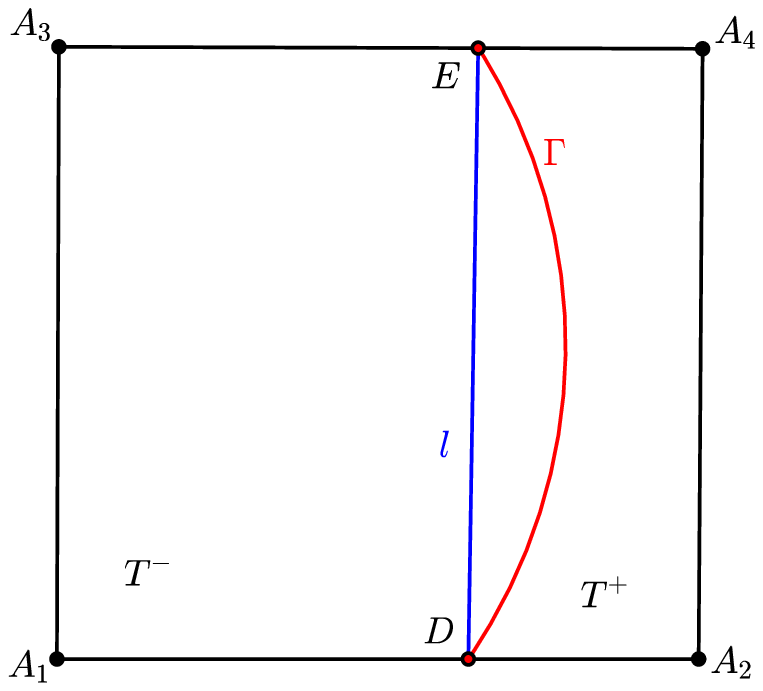}}
  \caption{Typical bilinear elements }
  \label{fig:subfig_bilinear} 
\end{minipage}
\end{figure}

\begin{figure}[H]
\centering
\subfigure[Case 1]{
    \label{nc_case1} 
    \includegraphics[width=1.2in]{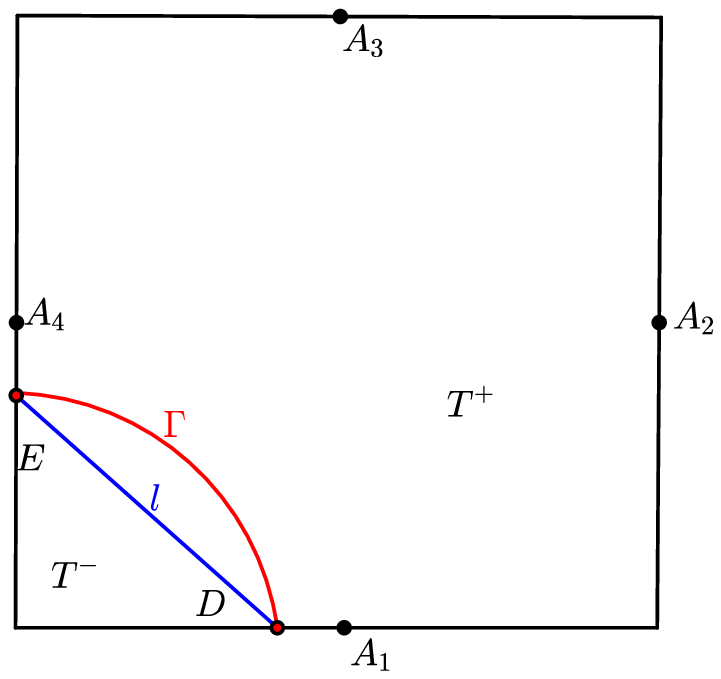}}
  \subfigure[Case 2]{
    \label{nc_case2} 
    \includegraphics[width=1.2in]{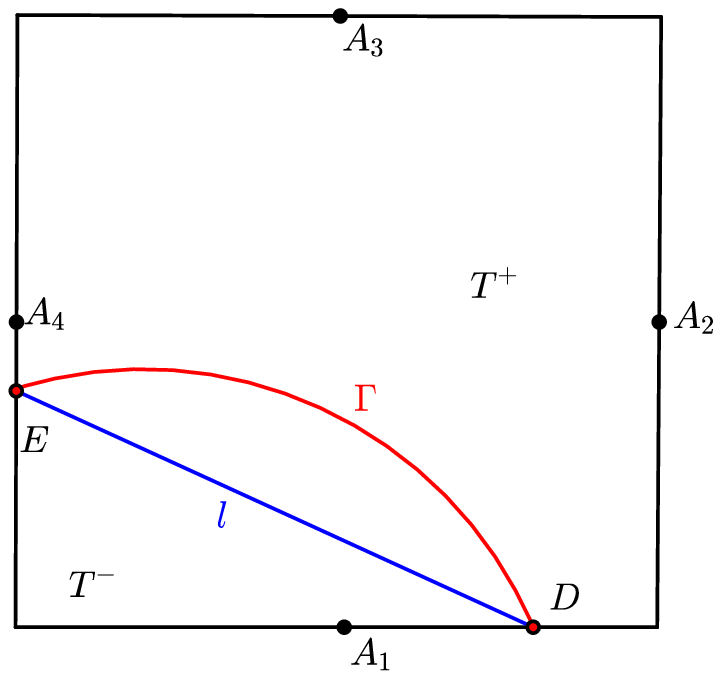}}
     \subfigure[Case 3]{
    \label{nc_case3} 
    \includegraphics[width=1.2in]{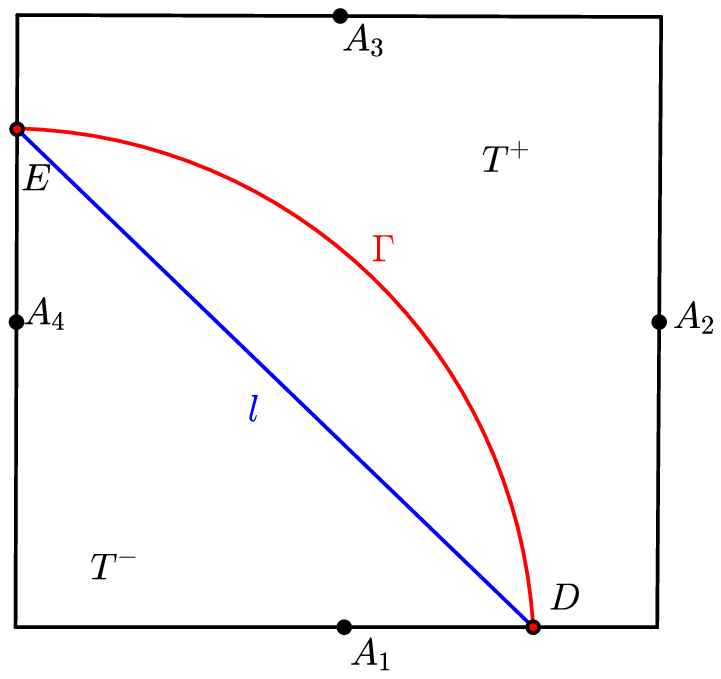}}
     \subfigure[Case 4]{
    \label{nc_case4} 
    \includegraphics[width=1.3in]{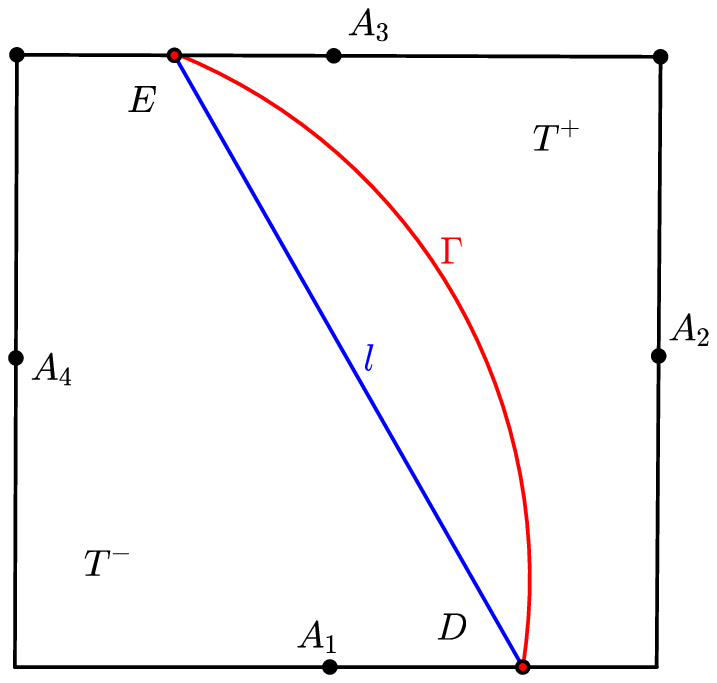}}
     \subfigure[Case 5]{
    \label{nc_case5} 
    \includegraphics[width=1.2in]{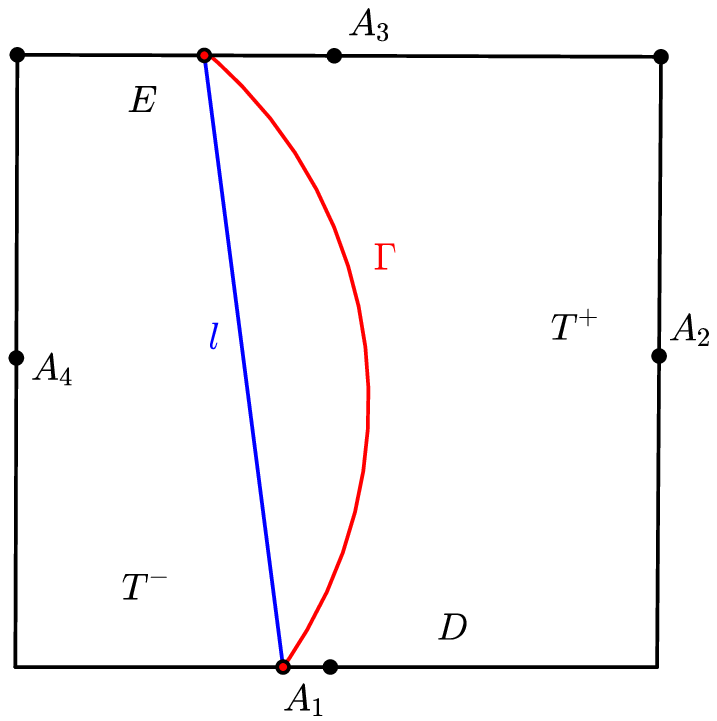}}
  \caption{Typical rotated $Q_1$ elements }
  \label{fig:subfig_nc} 
\end{figure}


\commentout{
In the following discussions, we let $\hat{\sigma}=\sigma^+-\sigma^-$ generated by the differences in Lam\'e parameters $\hat{\lambda}=\lambda^+-\lambda^-$, $\hat{\mu}=\mu^+-\mu^-$ {\color{red}($\hat{\sigma}$ is not well-defined???)}.
}

On an interface element $T$, we consider the elasticity IFE functions as piecewise vector polynomials in the following format:
\begin{equation}
\label{int_fun}
\bfphi_{T}(X)=
\left\{
\begin{aligned}
&\bfphi^-_T(X)\in \mathbf{ \Pi}_T, \;\;\; & \textrm{if} \;\;\; X \in \overline{T}^- , \\
&\bfphi^+_T(X)\in \mathbf{ \Pi}_T, \;\;\; & \textrm{if} \;\;\; X \in \overline{T}^+,
\end{aligned}
\right.
\end{equation}
with $\bfphi^+_T(X)$ and $\bfphi^-_T(X)$ satisfying that
\begin{eqnarray}
&&\begin{cases}
\bfphi^{-}_T|_{l}=\bfphi^{+}_T|_{l}, &\text{(for the linear case)}, \\
\bfphi^{-}_T|_{l}=\bfphi^{+}_T|_{l}, ~~d(\bfphi^{-}_T)=d(\bfphi^{+}_T), &\text{(for the bilinear/rotated $Q_1$ case)},
\end{cases} \label{lin_cond_1} \\
&&\sigma^+(\bfphi^+_T)(F)\;\bar{\mathbf{ n}}= \sigma^-(\bfphi^-_T)(F)\;\bar{\mathbf{ n}}, \label{lin_cond_2}
\end{eqnarray}
where $F$ is the point on $l$ which will be specified later and $d(\bfpsi)$ is a vector formed by the coefficients of the second degree term of
$\bfpsi \in \mathbf{ \Pi}_T$, i.e., the coefficient of $xy$ for a bilinear polynomial or the coefficient of $x^2-y^2$ for a rotated $Q_1$ polynomial.
Given a set of nodal-value vectors $\mathbf{ v}_i$, $i\in\mathcal{I}$, we further impose the nodal value condition:
\begin{equation}
\label{nod_val}
\bfphi_{T}(A_i)=\mathbf{ v}_i.
\end{equation}
Let $\Psi_{i,T}=[ \bfpsi_{i,T},\bfpsi_{i+|\mathcal{I}|,T} ]$, $i\in\mathcal{I}$ which is a 2-by-2 matrix basis function and let $L(X)=0$ be the equation of the line $l$ with $L(X)=\bar{\mathbf{ n}}\cdot(X-D)$. It is easy to see that $\Psi_{i,T}(A_j)=\delta_{i,j}I_2$, $i,j\in\mathcal{I}$.
Without loss of generality, we assume that $\abs{\mathcal{I}^+} \geq \abs{\mathcal{I}^-}$.
Then by \eqref{nod_val} and \eqref{lin_cond_1}, we can express \eqref{int_fun} as
\begin{equation}
\label{int_fun_2}
\bfphi_{T}(X)=
\left\{
\begin{aligned}
\bfphi^-_T(X) &=\bfphi^+_{T}(X)+ L(X)\mathbf{ c}_0 \;\;\; &\textrm{if} \;\;\; X \in \overline{T}^-,  \\
\bfphi^+_T(X) &=\sum_{i\in \mathcal{I}^+} \Psi_{i,T}(X) \mathbf{ v}_i+ \sum_{i\in \mathcal{I}^-} \Psi_{i,T}(X) \mathbf{ c}_i \;\;\; &\textrm{if} \;\;\; X \in \overline{T}^+,
\end{aligned}
\right.
\end{equation}
where $\mathbf{ c}_0=(c^1_0,c^2_0)^T$ and $\mathbf{ c}_i=(c^1_i,c^2_i)^T$, $i\in\mathcal{I}^-$ are to be determined.
Applying the jump condition for the stress tensor \eqref{lin_cond_2} to \eqref{int_fun_2}, we obtain
\begin{equation}
\label{sys_deri_1}
\begin{split}
\sigma^-\left( L \mathbf{ c}_0\right)(F)\bar{\mathbf{ n}}=&\hat{\sigma}(\bfphi^+_T)(F)\bar{\mathbf{ n}},
\end{split}
\end{equation}
where $\hat{\sigma}(\bfv)(X)$ for a vector function $\bfv$ is defined as follows:
\begin{equation}
\label{stress_tensor_hat}
\hat{\sigma}(\bfv)=(\hat{\sigma}_{ij}(\bfv))_{1\leqslant i,j \leqslant 2}, ~~\hat{\sigma}_{ij}(\mathbf{ v}) = \hat{\lambda} (\nabla\cdot \mathbf{ v}) \delta_{i,j} + 2\hat{\mu} \epsilon_{ij}(\mathbf{ v}),~~\text{with~~} \hat{\lambda}=\lambda^+-\lambda^-,~\hat{\mu}=\mu^+-\mu^-.
\end{equation}
Also, in $\hat{\sigma}(\bfphi^+_T)(X)$, the function $\bfphi^+_T$ is a polynomial so that it can be evaluated for any $X$, and this meaning
applies to similar situations from now on. By direct calculations, we have
\begin{equation}
\label{sys_deri_2}
\sigma^-(L \mathbf{ c}_0)(F)=
\left[\begin{array}{cc}
\bar{n}_1^2(\lambda^-+\mu^-)+\mu^- & \bar{n}_1\bar{n}_2(\lambda^-+\mu^-) \\
\bar{n}_1\bar{n}_2(\lambda^-+\mu^-) & \bar{n}_2^2(\lambda^-+\mu^-)+\mu^-
\end{array}\right]\left[\begin{array}{c} c^1_0 \\ c^2_0 \end{array}\right]:=K\mathbf{ c}_0.
\end{equation}
Then we note that
\begin{equation}
\label{P}
K=Q \mathcal{P}^- Q^T, ~~ \text{with} ~~\mathcal{P}^-=\left[\begin{array}{cc} (\lambda^- + 2\mu^-) & 0 \\ 0 & \mu^- \end{array}\right],~~ Q=[\bar{\mathbf{ n}},\bar{\mathbf{ t}}], 
\end{equation}
which is obviously non-singular. Hence $\mathbf{ c}_0$ is determined by
\begin{equation}
\label{c_0}
\mathbf{ c}_0 = K^{-1} \hat{\sigma}(\bfphi^+_T)(F)\bar{\mathbf{ n}}.
\end{equation}
Next we apply the nodal value condition \eqref{nod_val} for $j\in\mathcal{I}^-$ and \eqref{c_0} to \eqref{int_fun_2} to obtain
\begin{equation}
\begin{split}
\label{sys_deri_3}
K \mathbf{ c}_j + L(A_j) \sum_{i\in\mathcal{I}^-}\hat{\sigma}(\Psi_{i,T}\mathbf{ c}_i)(F)\bar{\mathbf{ n}}= K\mathbf{ v}_j-L(A_j) \sum_{i\in\mathcal{I}^+}\hat{\sigma}(\Psi_{i,T}\mathbf{ v}_i)(F)\bar{\mathbf{ n}},~~j\in\mathcal{I}^-.
\end{split}
\end{equation}
We now put equations in \eqref{sys_deri_3} into a matrix form. To this end, we first let $(j_1, j_2, \cdots, j_{\abs{\mathcal{I}}})$ be a permutation of $(1, 2, \cdots, \abs{\mathcal{I}})$ such that $j_k \in \mathcal{I}^-$ for $1 \leq k \leq \abs{\mathcal{I}^-}$ but $j_k \in \mathcal{I}^+$ for $\abs{\mathcal{I}^-} + 1 \leq k \leq \abs{\mathcal{I}}$. Consider three vectors $\bfc$, $\bfv^-$ and $\bfv^+$ such that
\begin{align*}
\mathbf{ c} = \Big[\mathbf{ c}_{j_k}\Big]_{k=1}^{\abs{\mathcal{I}^-}} \in \mathbb{R}^{2\abs{\mathcal{I}^-}}, ~~
\mathbf{ v}^- = \Big[\mathbf{ v}_{j_k}\Big]_{k=1}^{\abs{\mathcal{I}^-}} \in \mathbb{R}^{2\abs{\mathcal{I}^-}}, ~~
\mathbf{ v}^+ = \Big[\mathbf{ v}_{j_k}\Big]_{k=\abs{\mathcal{I}^-} + 1}^{\abs{\mathcal{I}}} \in \mathbb{R}^{2\abs{\mathcal{I}^+}}.
\end{align*}
\commentout{
\begin{align*}
\mathbf{ c}^T=\left[\mathbf{ c}_1^T,\cdots,\mathbf{ c}^T_{|\mathcal{I}^-|}\right], ~~(\mathbf{ v}^{-})^T = \left[\mathbf{ v}_1^T,\cdots,\mathbf{ v}^T_{|\mathcal{I}^-|}\right],~~(\mathbf{ v}^{+})^T = \left[\mathbf{ v}_{\abs{\mathcal{I}^-}+1}^T,\mathbf{ v}_{\abs{\mathcal{I}^-}+2}^T\cdots,\mathbf{ v}^T_{|\mathcal{I}|}\right].
\end{align*}
}
We adopt the following notations:
\begin{subequations}
\label{notation}
\begin{align}
& \overline{K} =I_{|\mathcal{I}^-|}\otimes K \in\mathbb{R}^{2|\mathcal{I}^-|\times2|\mathcal{I}^-|}, ~~\overline{L} = \Big[ L(A_{j_k}) I_2 \Big]_{k = 1}^{\abs{\mathcal{I}^-}} \in \mathbb{R}^{2|\mathcal{I}^-|\times2}, \label{Lbar} \\
& \overline{\Psi}^- = \Big[ \overline{\Psi}_{j_k} \Big]_{k = 1}^{\abs{\mathcal{I}^-}} \in \mathbb{R}^{2|\mathcal{I}^-|\times2}, ~~
  \overline{\Psi}^+ = \Big[ \overline{\Psi}_{j_k} \Big]_{k = \abs{\mathcal{I}^-} + 1}^{\abs{\mathcal{I}}} \in \mathbb{R}^{2|\mathcal{I}^+|\times2}, \label{Psibar_1} \\
\textrm{with}~ & \overline{\Psi}_j = \left[\begin{array}{cc}
\hat{\sigma}(\bfpsi_{j,T})(F)\bar{\mathbf{ n}} & \hat{\sigma}(\bfpsi_{j+|\mathcal{I}|,T})(F)\bar{\mathbf{ n}}
\end{array}\right]^T\in \mathbb{R}^{2\times2} ~ \label{Psibar_2},~~1 \leq j \leq \abs{\mathcal{I}}.
\end{align}
\end{subequations}
For any vector $\mathbf{ r}\in\mathbb{R}^{2\times1}$, we note the identity $\hat{\sigma}(\Psi_{i,T}\mathbf{ r})(F)\bar{\mathbf{ n}}=\overline{\Psi}^T_i\mathbf{ r}$.
Hence by using the matrices defined in \eqref{Lbar}-\eqref{Psibar_2}, we can represent equations in \eqref{sys_deri_3} as follows:
\begin{align}
    &(\overline{K} + \overline{L} ~ \overline{\Psi}^{-T} )\mathbf{ c}=\mathbf{ b},   \label{lin_sys}\\
    \text{with}~~~~~~ &\mathbf{ b} = \overline{K}\mathbf{ v}^- - \overline{L} ~\overline{\Psi}^{+T}\mathbf{ v}^+. \label{b}
\end{align}
We note that the coefficient matrix in \eqref{lin_sys} is a generalized {\em Sherman-Morrison} matrix formed by matrices $\overline{K}, \overline{L}$ and
$\overline{\Psi}$.

\begin{rem}
\label{rem_unisol}
When $F=(D+E)/2$, the linear and bilinear IFE shape functions given by \eqref{int_fun_2} with coefficients determined by \eqref{int_fun_2} and \eqref{lin_sys}
coincide with those in \cite{2012LinZhang}.
\end{rem}

For linear IFE functions, because \eqref{lin_cond_2} is independent of the choice of $F$, the new construction procedure proposed above is the same as
the one considered in \cite{2010GongLi,2012LinZhang} and the unisolvance of a linear IFE function can only be conditionally guaranteed \cite{2012LinZhang}. However, with the proposed new construction procedure, the unisolvance for the bilinear and the rotated $Q_1$ IFE functions can always be guaranteed with
a suitable choice for $F$, and we proceed to discuss this important feature.

\commentout{
However under such a choice of $F$, the linear system \eqref{lin_sys} is singular for some interface configuration and special Lam\'e parameters \cite{2012LinZhang} for both the linear and bilinear cases. In our formulation, we provide another choice of $F$ such that \eqref{lin_sys} is always non-singular for the bilinear case as well as the rotated $Q_1$ case regardless of the interface location and Lam\'e parameters. However since the gradients of $\bfpsi_i$, $i\in\mathcal{I}$, are constants for the linear case, the location of $F$ does not have any affect on the \eqref{lin_sys}. In such a case, we refer readers to \cite{2012LinZhang} for the conditions on Lam\'e parameters such that the linear IFE shape functions are unisolvent.
}

First, for the rotated $Q_1$ IFE functions in Case 1 as illustrated in Figure \ref{fig:subfig_nc}(a), we note that there is no $\bfc_i, ~i \in \mathcal{I}^-$ coefficients in the formulation \eqref{int_fun_2} and $\mathbf{ c}_0$ is uniquely determined by \eqref{c_0}, and this means that the unisolvence for this case is always guaranteed.

To discuss other cases, we define two parameters $d$ and $e$ for describing the interface-element intersection points $D$ and $E$ for those typical rectangular interface elements illustrated in Figures \ref{fig:subfig_bilinear}-\ref{fig:subfig_nc}:

\begin{itemize}
\item
We let $d = \norm{D-A_1}/h, ~e = \norm{E-A_1}/h$ for {\bf Case 1} in Figure \ref{fig:subfig_bilinear} and
{\bf Case 2} and {\bf Case 3} in Figure \ref{fig:subfig_nc}.

\item
We let $d = \norm{D-A_1}/h, ~e = \norm{E-A_3}/h$ for {\bf Case 2} in Figure \ref{fig:subfig_bilinear} and
{\bf Case 4} and {\bf Case 5} in Figure \ref{fig:subfig_nc}.
\end{itemize}

We start from some estimates for the following two auxiliary functions:
\begin{equation}
\label{gnt}
g_n(X)=\sum_{i\in\mathcal{I}^-}L(A_i)\nabla\psi_{i,T}(X)\cdot\bar{\mathbf{ n}},  ~~~~ g_t(X)=\sum_{i\in\mathcal{I}^-}L(A_i)\nabla\psi_{i,T}(X)\cdot\bar{\mathbf{ t}}.
\end{equation}

\begin{lemma}
\label{uni_lem_1}
On each rectangular interface element $T\in\mathcal{T}^i_h$, let $F_0=t_0D+(1-t_0)E$ such that
\begin{itemize}
\item
when it is a bilinear element in \textbf{Case 1} illustrated in Figure \ref{fig:subfig_bilinear}, assume $t_0=e/(d+e)$;

\item
when it is a bilinear element in \textbf{Case 2} illustrated in Figure \ref{fig:subfig_bilinear}, assume $t_0=1-e$
if $d\geqslant e$, $t_0=1-d$ if $e > d$;

\item
when it is a rotate $Q_1$ element in \textbf{Case 2} or \textbf{Case 3} illustrated in Figure \ref{fig:subfig_nc}, assume $t_0=1$
when $d\geqslant e$ or $t_0=0$ when $e> d$

\item
when it is a rotate $Q_1$ element in \textbf{Case 4} or \textbf{Case 5} illustrated in Figure \ref{fig:subfig_nc}, assume $t_0=1/2$.
\end{itemize}
\commentout{
\begin{itemize}
\item $d=\| D - A_1 \|/h$, $e=\| E - A_1 \|/h$ for \textbf{Case 1} in Figure \ref{fig:subfig_bilinear}, $t_0=e/(d+e)$;
\item $d=\| D - A_1 \|/h$, $e=\| E - A_3 \|/h$ for \textbf{Case 2} in Figure \ref{fig:subfig_bilinear}, $t_0=1-e$ if $d\geqslant e$, $t_0=1-d$ if $e\geqslant d$;
\end{itemize}
and for the rotated $Q_1$ elements, assume
\begin{itemize}
\item $d=\| D - A_1 \|/h$, $e=\| E - A_1 \|/h$ for the \textbf{Case 1} and \textbf{Case 2} in Figure \ref{fig:subfig_nc}, $t_0=1$ if $d\geqslant e$, $t_0=0$ if $e\geqslant d$;
\item $d=\| D - A_1 \|/h$, $e=\| E - A_3 \|/h$ for the \textbf{Case 4} and \textbf{Case 5} in Figure \ref{fig:subfig_nc}, $t_0=1/2$;
\end{itemize}
}
Then
\begin{subequations}
\begin{align}
& (1-g_n(F_0))^2-g^2_t(F_0)\geqslant 0, ~~~ g_n^2(F_0) - g_t^2(F_0)\geqslant 0, \label{gtn_est} \\
& g_n(F_0)\in [0,1], ~~~ g_t(F_0)\in [-1,1].  \label{gn_est}
\end{align}
\end{subequations}
\end{lemma}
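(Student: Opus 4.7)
The plan is to verify the four inequalities by a direct case-by-case computation on the reference interface element. In each case the setup is the same: parametrize $D$ and $E$ via $d$ and $e$, read off the unit normal $\bar{\mathbf{ n}}$ and tangent $\bar{\mathbf{ t}}$ of the chord $\overline{DE}$, and evaluate $L(A_i)=\bar{\mathbf{ n}}\cdot(A_i-D)$ for $i\in\mathcal{I}^-$. Since each $\psi_{i,T}$ is bilinear or rotated $Q_1$, both $g_n(F_0)$ and $g_t(F_0)$ become explicit rational functions of $d,e$ once the prescribed $t_0$ is substituted, and the four inequalities reduce to algebraic estimates in $d,e\in(0,1)$.

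I would dispatch the easier configurations first. In the rotated $Q_1$ Cases 2 and 3, $|\mathcal{I}^-|=1$ and the choice $t_0\in\{0,1\}$ (dictated by $\max(d,e)$) places $F_0$ at the interface--edge intersection nearest the single node in $\mathcal{I}^-$; the resulting single-term sums yield $g_t(F_0)=0$ immediately and reduce $g_n(F_0)\in[0,1]$ to a one-variable check. In the rotated $Q_1$ Cases 4 and 5 the symmetric choice $t_0=1/2$ together with the antisymmetry of the rotated $Q_1$ basis about its midline again forces $g_t(F_0)=0$, so only the scalar inequality $g_n(F_0)\in[0,1]$ remains to be checked.

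Bilinear Case 1 (with $\mathcal{I}^-=\{1\}$) requires a slightly more elaborate but still tractable computation. With $t_0=e/(d+e)$, the gradient $\nabla\psi_{1,T}(F_0)$ collapses to a scalar multiple of $(1,1)$, and one finds
\begin{equation*}
g_n(F_0)=\frac{de(d+e-de)}{d^2+e^2}, \qquad g_t(F_0)=\frac{de(d-e)(d+e-de)}{(d+e)(d^2+e^2)}.
\end{equation*}
Then $g_n^2-g_t^2 = 4de\,g_n^2/(d+e)^2\geq 0$, while $1-g_n=[d^2(1-e)+e^2(1-d)+d^2e^2]/(d^2+e^2)\geq 0$ gives $g_n\in[0,1]$ and, combined with $|g_t|\leq g_n$, also $(1-g_n)^2\geq g_t^2$.

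The main obstacle is the bilinear Case 2, where $|\mathcal{I}^-|=2$ and $t_0$ is piecewise-defined via $\max(d,e)$. I would split into the subcases $d\geq e$ and $e>d$; in each the sums defining $g_n,g_t$ now carry two contributions and cannot be collapsed by a single axis symmetry. The key step is to observe that the asymmetric choice $t_0=1-\min(d,e)$ is tailored precisely so that the two contributions combine into products of manifestly nonnegative polynomial factors in $d,e\in(0,1)$, from which both $(1-g_n)^2-g_t^2\geq 0$ and $g_n^2-g_t^2\geq 0$ follow. The delicate verification is exhibiting such a factorization in each subcase and confirming $g_n(F_0)\in[0,1]$ throughout; once this is done, all four inequalities are in hand.
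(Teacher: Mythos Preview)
Your overall strategy---case-by-case direct computation of $g_n(F_0)$ and $g_t(F_0)$ as explicit rational functions of $d,e$---is exactly the route the paper takes (the paper only spells out bilinear Case~1 and asserts the other cases are similar). Your explicit formulas for $g_n$ and $g_t$ in bilinear Case~1 are correct, and your observation that $g_t=g_n\,(d-e)/(d+e)$, hence $g_n^2-g_t^2=4de\,g_n^2/(d+e)^2\geqslant 0$, is a clean way to obtain the second inequality in \eqref{gtn_est}.

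However, there is a genuine gap in your derivation of the first inequality in \eqref{gtn_est} for bilinear Case~1. You write that ``$g_n\in[0,1]$ \ldots\ combined with $|g_t|\leqslant g_n$, also $(1-g_n)^2\geqslant g_t^2$.'' This implication is false in general: take for instance abstract values $g_n=0.8$, $g_t=0.7$; then $|g_t|\leqslant g_n$ and $g_n\in[0,1]$, yet $(1-g_n)^2=0.04<0.49=g_t^2$. So $|g_t|\leqslant g_n$ together with $g_n\leqslant 1$ does not control $|g_t|$ against $1-g_n$. The paper avoids this by computing the numerator of $(1-g_n(F_0))^2-g_t^2(F_0)$ explicitly as a polynomial in $d,e$ and checking its sign directly. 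If you prefer to stay closer to your own formulas, the fix is to use the sharper relation $g_t=g_n(d-e)/(d+e)$ (which you already have) rather than merely $|g_t|\leqslant g_n$: then $(1-g_n)^2\geqslant g_t^2$ reduces to $g_n\leqslant (d+e)/(2\max(d,e))$, which is a separate algebraic inequality you still need to verify from your formula for $g_n$. Either way, a further computation is required; the step as written does not close.
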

\begin{proof}
We only provide a proof for the \textbf{Case 1} of the bilinear elements and similar arguments can be applied to other cases. By direct calculation, we can verify that
\begin{equation*}
\begin{split}
&g_n^2(F_0) - g_t^2(F_0)=\frac{4d^3e^3(d+e-de)^2}{(d^2 + e^2)^2(d+e)^2}\geqslant0;\\
&(1-g_n(F_0))^2-g^2_t(F_0) =\frac{-d^2 e^2 (e^2(1-d)-d^2(1-e))^2 + (e^2(1 - d) + d^2 (1 - e + e^2))^2(d+e)^2}{(d^2 + e^2)^2(d+e)^2}\geqslant 0,
\end{split}
\end{equation*}
which lead to \eqref{gtn_est}.
For \eqref{gn_est}, the first inequality is just a special case of Lemma 5.1 in \cite{2016GuoLin} and the second inequality is a consequence of \eqref{gtn_est}.
\end{proof}

\begin{lemma}
\label{uni_lem_2}
The matrix in the linear system \eqref{lin_sys} is non-singular if and only if the follwoing matrix is non-singular:
\begin{equation}
\label{Xi_mat}
\Xi(F)=\mathcal{P}^- +
\left[\begin{array}{cc}
(\hat{\lambda}+2\hat{\mu})g_n(F) & \hat{\lambda}g_t(F) \\
\hat{\mu}g_t(F) & \hat{\mu}g_n(F)
\end{array}\right].
\end{equation}
\end{lemma}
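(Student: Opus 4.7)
The plan is to reduce the non-singularity of the $2|\mathcal{I}^-|\times 2|\mathcal{I}^-|$ matrix in \eqref{lin_sys} to a $2\times 2$ condition via the matrix determinant lemma (a consequence of the Sherman--Morrison--Woodbury formula), and then to show by a direct calculation in the $\{\bar{\mathbf{ n}},\bar{\mathbf{ t}}\}$-frame that the reduced $2\times 2$ matrix is similar to $\Xi(F)$.

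First I would exploit that $K$ is non-singular by \eqref{P}, so $\overline{K}=I_{|\mathcal{I}^-|}\otimes K$ is non-singular with $\overline{K}^{-1}=I_{|\mathcal{I}^-|}\otimes K^{-1}$. The matrix determinant lemma then yields
\[
\det\bigl(\overline{K}+\overline{L}\,\overline{\Psi}^{-T}\bigr)=\det(\overline{K})\,\det\bigl(I_2+\overline{\Psi}^{-T}\overline{K}^{-1}\overline{L}\bigr),
\]
and a blockwise computation using \eqref{Lbar}--\eqref{Psibar_2} gives $\overline{\Psi}^{-T}\overline{K}^{-1}\overline{L}=\bigl(\sum_{i\in\mathcal{I}^-}L(A_i)\overline{\Psi}_i^T\bigr)K^{-1}$. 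Right-multiplying by the non-singular $K$ reduces the task to showing that $K+\sum_{i\in\mathcal{I}^-}L(A_i)\overline{\Psi}_i^T$ is non-singular if and only if $\Xi(F)$ is.

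For the second step, pick an arbitrary $\mathbf{ r}\in\mathbb{R}^2$ and use the identity $\overline{\Psi}_i^T\mathbf{ r}=\hat{\sigma}(\Psi_{i,T}\mathbf{ r})(F)\bar{\mathbf{ n}}$ stated just after \eqref{Psibar_2}, together with linearity of $\hat{\sigma}$ and constancy of $\mathbf{ r}$, to recast the sum as
\[
\sum_{i\in\mathcal{I}^-}L(A_i)\overline{\Psi}_i^T\mathbf{ r}=\hat{\sigma}(g\mathbf{ r})(F)\bar{\mathbf{ n}},\qquad g(X):=\sum_{i\in\mathcal{I}^-}L(A_i)\psi_{i,T}(X).
\]
From $\hat{\sigma}(g\mathbf{ r})=\hat{\lambda}(\mathbf{ r}\cdot\nabla g)I_2+\hat{\mu}\bigl(\mathbf{ r}(\nabla g)^T+(\nabla g)\mathbf{ r}^T\bigr)$, decomposing $\mathbf{ r}$ and $\nabla g(F)$ in the orthonormal basis $\{\bar{\mathbf{ n}},\bar{\mathbf{ t}}\}$---so that the normal and tangential components of $\nabla g(F)$ are precisely $g_n(F)$ and $g_t(F)$ defined in \eqref{gnt}---produces
\[
\hat{\sigma}(g\mathbf{ r})(F)\bar{\mathbf{ n}}=Q\begin{bmatrix}(\hat{\lambda}+2\hat{\mu})g_n(F) & \hat{\lambda}g_t(F)\\ \hat{\mu}g_t(F) & \hat{\mu}g_n(F)\end{bmatrix}Q^T\mathbf{ r}.
\]
Since $\mathbf{ r}$ is arbitrary and $K=Q\mathcal{P}^-Q^T$ by \eqref{P}, this gives $K+\sum_{i\in\mathcal{I}^-}L(A_i)\overline{\Psi}_i^T=Q\,\Xi(F)\,Q^T$, and orthogonality of $Q$ closes the reduction chain.

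The step I expect to be the main obstacle is producing the last displayed identity cleanly: the asymmetric off-diagonal entries $\hat{\lambda}g_t$ and $\hat{\mu}g_t$ of $\Xi(F)$ must emerge exactly from the interaction between the divergence contribution $\hat{\lambda}(\mathbf{ r}\cdot\nabla g)\bar{\mathbf{ n}}$ and the strain-rate contribution $\hat{\mu}\bigl(\mathbf{ r}(\nabla g)^T+(\nabla g)\mathbf{ r}^T\bigr)\bar{\mathbf{ n}}$ once both are projected onto $\bar{\mathbf{ n}}$ and $\bar{\mathbf{ t}}$; this is routine but demands careful bookkeeping.
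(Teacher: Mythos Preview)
Your proposal is correct and follows essentially the same route as the paper: invoke the Sherman--Morrison/matrix-determinant reduction to the $2\times2$ matrix $I_2+\overline{\Psi}^{-T}\overline{K}^{-1}\overline{L}=(K+\sum_{i\in\mathcal{I}^-}L(A_i)\overline{\Psi}_i^T)K^{-1}$, then conjugate by $Q=[\bar{\bfn},\bar{\bft}]$ to identify the bracketed sum with $\Xi(F)$. Your computation in the $\{\bar{\bfn},\bar{\bft}\}$-frame is exactly the ``direct verification'' the paper alludes to, and your tracking of the transpose on $\overline{\Psi}_i$ and of the orientation $Q\,\Xi(F)\,Q^T$ is in fact cleaner than the printed version.
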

\begin{proof}
Note that the matrix in \eqref{lin_sys} is in a generalized \textit{Sherman-Morrison} format. Since $\overline{K}$ is invertible, the linear system \eqref{lin_sys} is non-singular if and only if the matrix
\begin{equation}
\label{uni_lem_2_1}
I_2+\overline{\Psi}^{-T}\overline{K}^{-1}\overline{L}=I_2 + \sum_{j\in\mathcal{I}^-}L(A_j)\overline{\Psi}^T_jK^{-1}= (K + \sum_{j\in\mathcal{I}^-}L(A_j)\overline{\Psi}_j )K^{-1}
\end{equation}
is invertible. Then by using \eqref{P}, we can directly verify that
\begin{equation}
\label{uni_lem_2_2}
Q(K + \sum_{j\in\mathcal{I}^-}L(A_j)\overline{\Psi}_j )Q^T = P^- +
\left[\begin{array}{cc}
(\hat{\lambda}+2\hat{\mu})g_n(F) & \hat{\lambda}g_t(F) \\
\hat{\mu}g_t(F) & \hat{\mu}g_n(F)
\end{array}\right]
\end{equation}
which leads to the conclusion of this lemma because $Q$ is invertible.
\end{proof}

\begin{lemma}
\label{uni_lem_3}
With the $F_0$ specified in Lemma \ref{uni_lem_1}, we have
\begin{equation}
\label{det_Xi}
\textrm{Det}(\Xi(F_0))>2 \left(\min\{ \mu^+, \mu^- \}\right)^2.
\end{equation}
\end{lemma}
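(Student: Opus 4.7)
The plan is to compute $\mathrm{Det}(\Xi(F_0))$ in closed form, then rewrite it as $2B^2$ plus a \emph{manifestly nonnegative} remainder $R$, where $B=(1-g_n)\mu^-+g_n\mu^+$ is a convex combination that is automatically bounded below by $\min\{\mu^+,\mu^-\}$, and where the nonnegativity of $R$ is a direct consequence of the inequalities on $g_n(F_0),g_t(F_0)$ provided by Lemma~\ref{uni_lem_1}.

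First I would observe that $\Xi(F_0)$ can be written as a convex combination
\[
\Xi(F_0) \;=\; (1-g_n)\mathcal{P}^- + g_n\,\mathcal{P}^+ + g_t\begin{bmatrix} 0 & \hat{\lambda} \\ \hat{\mu} & 0\end{bmatrix},
\]
where $g_n=g_n(F_0)$, $g_t=g_t(F_0)$, and $\mathcal{P}^+=\mathrm{diag}(\lambda^++2\mu^+,\mu^+)$. Expanding the $2\times2$ determinant and setting $A=(1-g_n)(\lambda^-+2\mu^-)+g_n(\lambda^++2\mu^+)$ and $B=(1-g_n)\mu^-+g_n\mu^+$ gives
\[
\mathrm{Det}(\Xi(F_0)) \;=\; AB - g_t^2\,\hat{\lambda}\hat{\mu}.
\]

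Next I would subtract and add $2B^2$. A direct calculation using $\lambda^\pm=(\lambda^\pm+2\mu^\pm)-2\mu^\pm$ yields
\[
AB \;=\; 2B^2 + (1-g_n)^2\lambda^-\mu^- + g_n(1-g_n)(\lambda^-\mu^+ +\lambda^+\mu^-) + g_n^2\lambda^+\mu^+.
\]
Then, expanding $\hat{\lambda}\hat{\mu}=\lambda^+\mu^++\lambda^-\mu^--\lambda^+\mu^--\lambda^-\mu^+$ and regrouping gives the clean decomposition
\[
\mathrm{Det}(\Xi(F_0)) \;=\; 2B^2 + R,
\]
\[
R \;=\; \bigl((1-g_n)^2-g_t^2\bigr)\lambda^-\mu^- + \bigl(g_n^2-g_t^2\bigr)\lambda^+\mu^+ + \bigl(g_n(1-g_n)+g_t^2\bigr)(\lambda^+\mu^-+\lambda^-\mu^+).
\]

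Finally, I would read off the conclusion from Lemma~\ref{uni_lem_1}: the inequalities \eqref{gtn_est} give that both $(1-g_n)^2-g_t^2\geq 0$ and $g_n^2-g_t^2\geq 0$, while \eqref{gn_est} implies $g_n(1-g_n)\geq 0$, so the third coefficient is also nonnegative. Since the Lam\'e parameters are positive, each of the three summands of $R$ is nonnegative and (by checking the two boundary cases $g_n\in\{0,1\}$ separately, where $g_t=0$ forces $R=\lambda^\mp\mu^\mp>0$) at least one is strictly positive, so $R>0$. Combined with $B\geq \min\{\mu^+,\mu^-\}$ from convexity, this yields $\mathrm{Det}(\Xi(F_0))=2B^2+R>2(\min\{\mu^+,\mu^-\})^2$.

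The main obstacle is the algebraic rearrangement of $AB-g_t^2\hat{\lambda}\hat{\mu}$ into the form $2B^2+R$ with coefficients that exactly match the three nonnegative expressions $(1-g_n)^2-g_t^2$, $g_n^2-g_t^2$, and $g_n(1-g_n)+g_t^2$ supplied by Lemma~\ref{uni_lem_1}; once this ``correct'' regrouping is found, the inequality is immediate. A secondary subtlety is confirming strictness in the edge cases $g_n\in\{0,1\}$, but these force $g_t=0$ via \eqref{gtn_est} and the positivity of $\lambda^\pm\mu^\pm$ then closes the argument.
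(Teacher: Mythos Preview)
Your proof is correct and follows the paper's approach: expand $\mathrm{Det}(\Xi(F_0))$, use Lemma~\ref{uni_lem_1} to show the $\lambda$-dependent terms are nonnegative, and bound the remaining pure-$\mu$ quantity below by $2(\min\{\mu^+,\mu^-\})^2$. Your packaging of the $\mu$-only part as $2B^2$ with $B=(1-g_n)\mu^-+g_n\mu^+$ is in fact tidier than the paper's, and your case analysis for $R>0$ supplies the strict inequality more carefully than the paper's final line, which drops the cross term $4\mu^+\mu^-(1-g_n)g_n$ and then asserts $2(\mu^+)^2g_n^2+2(\mu^-)^2(1-g_n)^2>2(\min\{\mu^+,\mu^-\})^2$ --- an inequality that actually fails, e.g., when $\mu^+=\mu^-$ and $g_n=\tfrac12$.
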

\begin{proof}
By direct calculations according to \eqref{Xi_mat}, we have
\begin{equation*}
\begin{split}
\label{Xi}
\textrm{Det}(\Xi(F_0))=&\lambda^+\mu^+(g_n^2-g_t^2)+\lambda^-\mu^-((1-g_n)^2-g_t^2)\\
&+\lambda^-\mu^+((1-g_n)g_n+g_t^2)+\lambda^+\mu^-(g_n(1-g_n)+g_t^2)\\
&+2(\mu^+)^2g_n^2+2(\mu^-)^2(1-g_n)^2+4\mu^+\mu^-(1-g_n)g_n,
\end{split}
\end{equation*}
in which $g_n = g_n(F_0)$ and $g_t = g_t(F_0)$. Then, apply estimates in Lemma \ref{uni_lem_1} to the above, we have
$
\textrm{Det}(\Xi(F_0))\geqslant 2(\mu^+)^2g_n^2+2(\mu^-)^2(1-g_n)^2 > 2 \left(\min\{ \mu^+, \mu^- \}\right)^2.
$
\end{proof}
Finally we can prove the following main theorem in this section.

\begin{thm}[Unisolvence]
\label{unisolvence}
Let $T\in\mathcal{T}^i_h$ be a rectangular interface element with $F=F_0$ specified in Lemma \ref{uni_lem_1}. Then given any vector $\mathbf{ v}\in\mathbb{R}^{2|\mathcal{I}^-|\times1}$, for the bilinear and rotated $Q_1$ elements, there exists one and only one IFE shape function satisfying \eqref{int_fun}-\eqref{nod_val}.
\end{thm}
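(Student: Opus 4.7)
The plan is to package the machinery built in Lemmas \ref{uni_lem_1}--\ref{uni_lem_3} into a direct uniqueness argument for $\bfphi_T$. Unisolvence is equivalent to asserting that the map from the coefficient vector $(\mathbf{c}_1,\dots,\mathbf{c}_{|\mathcal{I}^-|},\mathbf{c}_0)$ appearing in \eqref{int_fun_2} to the nodal-value vector $(\mathbf{v}_1,\dots,\mathbf{v}_{|\mathcal{I}|})$ is a bijection on the appropriate finite-dimensional spaces; since both spaces have the same dimension, it suffices to show injectivity, which by the linear structure of \eqref{int_fun_2}, \eqref{c_0} and \eqref{lin_sys} reduces to showing that the homogeneous version of \eqref{lin_sys} admits only the trivial solution.

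First I would dispose of the degenerate configuration, namely the rotated $Q_1$ element in Case 1 of Figure \ref{fig:subfig_nc}. In this case the partition of $T$ by $l$ gives $|\mathcal{I}^-|=0$ (no midpoint lies on the minus side), so the sum over $\mathcal{I}^-$ in \eqref{int_fun_2} is empty, there are no coefficients $\mathbf{c}_i$ to solve for, and $\bfphi^+_T$ is already fully determined by the nodal values. The remaining vector $\mathbf{c}_0$ is then uniquely pinned down by the explicit formula \eqref{c_0}, so unisolvence holds trivially.

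For every other case (both bilinear cases in Figure \ref{fig:subfig_bilinear} and Cases 2--5 of the rotated $Q_1$ element in Figure \ref{fig:subfig_nc}), I would invoke the choice $F=F_0$ provided by Lemma \ref{uni_lem_1} and then chain Lemma \ref{uni_lem_2} with Lemma \ref{uni_lem_3}. Lemma \ref{uni_lem_2} shows that the generalized Sherman--Morrison coefficient matrix $\overline{K}+\overline{L}\,\overline{\Psi}^{-T}$ in \eqref{lin_sys} is non-singular if and only if the $2\times 2$ matrix $\Xi(F_0)$ of \eqref{Xi_mat} is non-singular. Lemma \ref{uni_lem_3} then certifies
\begin{equation*}
\textrm{Det}(\Xi(F_0)) > 2\bigl(\min\{\mu^+,\mu^-\}\bigr)^2 > 0,
\end{equation*}
which is strictly positive under the standing assumption $\mu^\pm>0$ on the Lam\'e parameters and, crucially, uniform in the interface location. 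Hence \eqref{lin_sys} has a unique solution $\mathbf{c}\in\mathbb{R}^{2|\mathcal{I}^-|}$ for any right-hand side $\mathbf{b}$ defined by \eqref{b}; substituting $\mathbf{c}$ into \eqref{int_fun_2} determines $\bfphi^+_T$, and then $\mathbf{c}_0$ is determined through \eqref{c_0}, giving $\bfphi^-_T$ and completing the construction of a unique $\bfphi_T$ satisfying \eqref{int_fun}--\eqref{nod_val}.

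The conceptually hard part of the argument is already absorbed into Lemma \ref{uni_lem_3}, where the cleverly chosen location of $F_0$ (depending on the interface-cut parameters $d$ and $e$) forces the auxiliary functions $g_n(F_0)$ and $g_t(F_0)$ to satisfy the inequalities of Lemma \ref{uni_lem_1}, which in turn make the cross terms in $\textrm{Det}(\Xi(F_0))$ non-negative and leave the two squared Lam\'e terms $2(\mu^+)^2 g_n^2+2(\mu^-)^2(1-g_n)^2$ as the dominant part. Once that case-by-case geometric verification is available, the theorem itself is essentially a one-line corollary about non-singularity of a linear system.
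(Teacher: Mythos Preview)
Your proposal is correct and follows essentially the same approach as the paper: the paper's own proof is a single line invoking Lemmas \ref{uni_lem_2} and \ref{uni_lem_3}, and your argument does exactly this, with the rotated $Q_1$ Case~1 handled separately (as the paper also does, in the discussion preceding Lemma \ref{uni_lem_1}). The additional commentary you provide about why the determinant bound works is accurate but already encapsulated in the proofs of the cited lemmas.
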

\begin{proof}
The proof is directly based on Lemma \ref{uni_lem_2} and Lemma \ref{uni_lem_3}.
\end{proof}

\begin{rem}
According to the generalized \textit{Sherman-Morrison} formula and \eqref{uni_lem_2_1}, \eqref{uni_lem_2_2}, we can give an analytical formula for the coefficients $\mathbf{ c}$ in \eqref{lin_sys} as
\begin{equation}
\begin{split}
\label{SheMo}
\mathbf{ c}=&\overline{K}^{-1}\mathbf{ b} - \overline{K}^{-1}\overline{L}(I_2 + \overline{\Psi}^{-T}\overline{K}^{-1}\overline{L})^{-1}\overline{\Psi}^{-T}\overline{K}^{-1}\mathbf{ b}\\
=&\overline{K}^{-1}\mathbf{ b} - \overline{K}^{-1}\overline{L}KQ^T\Xi^{-1}Q\overline{\Psi}^{-T}\overline{K}^{-1}\mathbf{ b}.
\end{split}
\end{equation}
Here it is important to note that $\overline{K}$ is a diagonal block matrix formed only by the 2-by-2 matrix $K$ and $\Xi$ is also a 2-by-2 matrix so that their inverses are easy to calculate analytically. Hence, if preferred, there is no need to solve for $\bfc$ numerically because of \eqref{SheMo}.
\end{rem}

By taking the nodal value vector $\mathbf{ v}$ to be unit vectors, we construct the IFE shape functions satisfying the weak jump conditions \eqref{lin_cond_1}-\eqref{lin_cond_2} and
\begin{equation}
\label{ife_vec_bas}
\bfphi_{i,T}(A_j)=
\left\{\begin{array}{cc}
\delta_{i,j},  \\
0,
\end{array}\right.
~i=1,\cdots,|\mathcal{I}|,
~~~~ \textrm{and} ~~~~
\bfphi_{i,T}(A_j)=
\left\{\begin{array}{cc}
0,  \\
\delta_{i-|\mathcal{I}|,j},
\end{array}\right.
~ i=|\mathcal{I}|+1,\cdots,2|\mathcal{I}|.
\end{equation}
The local IFE spaces on interface elements $T\in\mathcal{T}^i_h$ is then defined as
\begin{equation}
\label{inter_ife_sp}
\mathbf{ S}_h(T)=\textrm{Span}\{ \bfphi_{i,T}, \,\bfphi_{i+\abs{\mathcal{I}},T}:\, i \in \mathcal{I} \}.
\end{equation}
The local IFE spaces defined by \eqref{non_inter_ife_sp} and \eqref{inter_ife_sp} can be used to construct an IFE space over the whole domain
$\Omega$ according to the need of a finite element scheme. For example, by enforcing the continuity at the mesh nodes, we can consider the following global IFE space:
\begin{equation}
\begin{split}
\label{glob_sp}
\mathbf{ S}_h(\Omega) = &\left\{ \mathbf{ v}\in [L^2(\Omega)]^2 \,:\, \mathbf{ v}|_T\in\mathbf{ S}_h(T); \right. \\
  & \left. ~ \mathbf{ v}|_{T_1}(N)=\mathbf{ v}|_{T_2}(N)~ \forall N\in \mathcal{N}_h, ~ \forall T_1, T_2 \in \mathcal{T}_h ~\textrm{such~that}~ N\in T_1\cap T_2  \right\}.
\end{split}
\end{equation}


\subsection{Properties of IFE Shape Functions}
In this subsection, we discuss some fundamental properties of the proposed IFE shape functions.
We tacitly assume that, on each interface element $T \in \mathcal{T}_h^i$, $\bfphi_{i,T}, 1 \leq i \leq 2|\mathcal{I}|$ are the bilinear or the rotated $Q_1$
IFE shape functions constructed according to Theorem \ref{unisolvence} or they are the linear IFE shape functions constructed
under the conditions specified by Theorem 4.7 in \cite{2012LinZhang}.

\begin{thm}[Boundedness]
\label{thm_bound}
There exists a constant $C$ such that the following estimates are valid for IFE shape functions on each interface element:
\begin{equation}
\label{bound_1}
| \bfphi_{i,T} |_{k,\infty,T} \leqslant Ch^{-k}, ~~ k=0,1,2, ~~1 \leq i \leq 2|\mathcal{I}|,~~\forall T \in \mathcal{T}_h^i.
\end{equation}
\end{thm}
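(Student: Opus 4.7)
The plan is to derive the boundedness from the explicit representation of the shape function together with the analytical formula \eqref{SheMo} for the coefficient vector $\mathbf{c}$. Take a unit nodal data vector (so $\|\mathbf{v}^-\| + \|\mathbf{v}^+\| \leq C$) and write $\bfphi_{i,T}$ in the form \eqref{int_fun_2}. Then $|\bfphi_{i,T}|_{k,\infty,T}$ is controlled by $|\bfpsi_{j,T}|_{k,\infty,T}$, by the magnitudes $\|\mathbf{c}_j\|$, $j\in\mathcal{I}^-$, by $\|\mathbf{c}_0\|$, and by $|L(X)|_{k,\infty,T}$. The standard bound \eqref{fe_bound} takes care of $\bfpsi_{j,T}$, and $L$ is affine with $\|\bar{\mathbf{n}}\|=1$, so $|L|_{0,\infty,T} \leq Ch$ and $|L|_{1,\infty,T} \leq C$ (higher derivatives vanish). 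What remains is to bound $\|\mathbf{c}\|$ and $\|\mathbf{c}_0\|$.

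First I would estimate the individual matrices appearing in \eqref{SheMo}. Since $K$ is a fixed $2\times 2$ matrix depending only on the Lam\'e parameters and $\bar{\mathbf{n}}$, and its inverse is explicitly known through the factorization \eqref{P}, we have $\|\overline{K}^{-1}\|=\|K^{-1}\|\leq C$ independent of $h$. From \eqref{Lbar}, $|L(A_{j_k})|\leq Ch$, so $\|\overline{L}\|\leq Ch$. The entries of $\overline{\Psi}^\pm$ are evaluations of $\hat\sigma(\bfpsi_{j,T})\bar{\mathbf{n}}$; by \eqref{fe_bound} these are bounded by $Ch^{-1}$, hence $\|\overline{\Psi}^\pm\|\leq Ch^{-1}$. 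Lemma \ref{uni_lem_3} gives $\|\Xi(F_0)^{-1}\|\leq C$, and $Q$ is orthogonal so $\|Q\|=1$. Therefore
\[
\|\mathbf{b}\|\leq \|\overline{K}\|\,\|\mathbf{v}^-\| + \|\overline{L}\|\,\|\overline{\Psi}^{+T}\|\,\|\mathbf{v}^+\|\leq C,
\]
and the two terms in \eqref{SheMo} satisfy $\|\overline{K}^{-1}\mathbf{b}\|\leq C$ and $\|\overline{K}^{-1}\overline{L}\,K Q^T\Xi^{-1}Q\,\overline{\Psi}^{-T}\overline{K}^{-1}\mathbf{b}\|\leq C\cdot h\cdot h^{-1}\cdot C = C$, yielding $\|\mathbf{c}\|\leq C$.

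Next I would estimate $\mathbf{c}_0$ from \eqref{c_0}. The polynomial $\bfphi_T^+$ is a linear combination of the basis matrices $\Psi_{j,T}$ with coefficients $\mathbf{v}_j$ (for $j\in\mathcal{I}^+$) and $\mathbf{c}_j$ (for $j\in\mathcal{I}^-$), all bounded by a constant. Because $\hat\sigma$ involves only first derivatives, the bound \eqref{fe_bound} applied to these basis components gives $\|\hat\sigma(\bfphi_T^+)(F_0)\bar{\mathbf{n}}\|\leq Ch^{-1}$, and since $\|K^{-1}\|\leq C$ we conclude $\|\mathbf{c}_0\|\leq Ch^{-1}$.

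Putting the pieces together, on $\overline{T}^+$ we have $\bfphi_T^+=\sum_j \Psi_{j,T}\,\bfalpha_j$ with bounded $\bfalpha_j$, so \eqref{fe_bound} directly gives $|\bfphi_T^+|_{k,\infty,T}\leq Ch^{-k}$ for $k=0,1,2$. On $\overline{T}^-$, $\bfphi_T^-=\bfphi_T^+ + L(X)\mathbf{c}_0$. For $k=0$, $|L|_{0,\infty,T}\|\mathbf{c}_0\|\leq Ch\cdot h^{-1}\leq C$; for $k=1$, $|L|_{1,\infty,T}\|\mathbf{c}_0\|\leq C\cdot h^{-1}$; and for $k=2$ the term $L\mathbf{c}_0$ contributes nothing because $L$ is affine. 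This proves \eqref{bound_1}. I expect the only technical point, and hence the main obstacle, to be making sure the constant in $\|\Xi^{-1}\|\leq C$ can be taken uniformly in the interface location and in all interface configurations; but this follows from Lemma \ref{uni_lem_3} together with the upper bound on $\|\Xi\|$ coming from \eqref{gn_est}. The linear case is simpler because the $\Psi_{j,T}$ have zero second derivative and the coefficient $F$ does not affect the system, so the same estimates apply under the conditions in Theorem 4.7 of \cite{2012LinZhang}.
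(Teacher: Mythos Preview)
Your proposal is correct and follows essentially the same argument as the paper: bound $\|\overline{K}\|$, $\|\overline{L}\|\leq Ch$, $\|\overline{\Psi}^{\pm}\|\leq Ch^{-1}$, use \eqref{gn_est} and \eqref{det_Xi} to control $\|\Xi^{-1}\|$, apply the Sherman--Morrison formula \eqref{SheMo} to get $\|\mathbf{c}\|\leq C$, then \eqref{c_0} for $\|\mathbf{c}_0\|\leq Ch^{-1}$, and finally combine with \eqref{fe_bound} and the affinity of $L$ in \eqref{int_fun_2}. The paper's proof is the same outline, stated slightly more tersely.
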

\begin{proof}
For the bilinear or the rotated $Q_1$ IFE shape functions, we note that \eqref{P} yields $\| \overline{K} \|\leqslant C$. And \eqref{Lbar} shows $\| \overline{L} \|\leqslant Ch$ because $| L |_{0,\infty,T}\leqslant Ch$, and $\| \overline{\Psi}^s \| \leqslant Ch^{-1}$. So we have $\| \mathbf{ b} \|\leqslant C$, of which the constants $C$ only depends on Lam\'e parameters. Next \eqref{gn_est} and \eqref{det_Xi} suggest $\| \Xi^{-1} \|\leqslant C$. So by the formula \eqref{SheMo}, we have $\|\mathbf{ c} \|\leqslant C$ and then use \eqref{c_0} to show $\| \mathbf{ c}_0 \| \leqslant Ch^{-1}$. Therefore $\| \mathbf{ c}_0 L \|_{0,\infty,T}\leqslant C$ and $| \mathbf{ c}_0 L |_{1,\infty,T}\leqslant Ch^{-1}$ because $| L |_{0,\infty,T}\leqslant Ch$ and $| L |_{1,\infty,T}\leqslant C$. In addition, it is easy to see $| \mathbf{ c}_0 L |_{2,\infty,T}=0$, since $L$ is a linear function. Finally, applying these estimates and \eqref{fe_bound} to \eqref{int_fun_2} leads to
\eqref{bound_1}. Similar arguments can be used for the linear IFE shape functions.
\end{proof}

For simplicity of presentation, we denote the following matrix shape functions:
\begin{equation}
\label{basis_mat}
\Phi_{i,T}(X) = \left[ \bfphi_{i,T}(X), ~\bfphi_{i+|\mathcal{I}|,T}(X) \right], ~~ i\in\mathcal{I}.
\end{equation}

\begin{thm}[Partition of Unity]
\label{POU}
On each interface element $T\in\mathcal{T}^i_h$, we have
\begin{equation}
\label{POU_2}
\sum_{i\in\mathcal{I}} \Phi_{i,T}(X) = I_2, ~~~ \sum_{i\in\mathcal{I}} \partial_{x_j}\Phi_{i,T}(X) = \mathbf{ 0}_{2\times2}, ~~~ \sum_{i\in\mathcal{I}} \partial_{x_jx_k}\Phi_{i,T}(X) = \mathbf{ 0}_{2\times2}, ~~j, k=1,2.
\end{equation}
\end{thm}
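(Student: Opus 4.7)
The plan is to prove all three identities in \eqref{POU_2} simultaneously by a uniqueness-plus-consistency argument, reducing everything to the fact that any constant vector-valued function is an IFE function that interpolates its own nodal values.

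First I would observe that any constant vector function $\bfu(X)\equiv \mathbf{v}\in\mathbb{R}^2$ trivially lies in the local IFE space $\mathbf{S}_h(T)$ on every interface element $T\in\mathcal{T}^i_h$: its restrictions $\bfu^\pm$ agree on the line $l$, both second-degree coefficient vectors $d(\bfu^\pm)$ vanish (they vanish for constants in both $\mathbf{\Pi}_T$ choices), and $\sigma^\pm(\bfu^\pm)\equiv 0$ since $\nabla\bfu=0$; hence the weak jump conditions \eqref{lin_cond_1}--\eqref{lin_cond_2} are satisfied regardless of the choice of $F$. In particular the two constant functions $\bfe_1=(1,0)^T$ and $\bfe_2=(0,1)^T$ belong to $\mathbf{S}_h(T)$.

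Next I would consider the matrix-valued piecewise polynomial $\Psi_T(X):=\sum_{i\in\mathcal{I}}\Phi_{i,T}(X)$ and examine it column by column. By \eqref{ife_vec_bas} one checks directly that $\Phi_{i,T}(A_j)=\delta_{i,j}I_2$, so $\Psi_T(A_j)=I_2$ for every $j\in\mathcal{I}$. Hence the first column $\sum_{i\in\mathcal{I}}\bfphi_{i,T}$ is an IFE function on $T$ whose nodal values at every $A_j$ equal $\bfe_1$, while the constant function $\bfu\equiv\bfe_1$ is also an IFE function with the same nodal values. Invoking uniqueness---Theorem \ref{unisolvence} for the bilinear and rotated $Q_1$ cases and the corresponding unisolvence result in \cite{2012LinZhang} for the linear case under its stated conditions---these two IFE functions must coincide, so $\sum_{i\in\mathcal{I}}\bfphi_{i,T}\equiv\bfe_1$ on $T$. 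The same argument applied to the second column gives $\sum_{i\in\mathcal{I}}\bfphi_{i+|\mathcal{I}|,T}\equiv\bfe_2$. Combining these yields $\Psi_T(X)\equiv I_2$, which is the first identity in \eqref{POU_2}. The remaining two identities then follow immediately by differentiating the constant identity $\Psi_T\equiv I_2$ inside each of the subregions $\overline{T}^\pm$ (the pieces $\bfphi^\pm_T$ are polynomials, so derivatives are unambiguous on each side).

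I do not anticipate a real obstacle. The only point that deserves a brief sanity check is that the \emph{discrete} jump conditions \eqref{lin_cond_1}--\eqref{lin_cond_2} (which are imposed on the line $l$ and at the single point $F$, not on the curved interface $\Gamma$) are what the IFE construction enforces; these are trivially satisfied by any constant, so consistency with the approximate interface data is automatic and no curvature estimate is needed.
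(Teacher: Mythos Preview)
Your proposal is correct and follows essentially the same approach as the paper's own proof: the paper also observes that the constant vectors $(1,0)^T$ and $(0,1)^T$ satisfy the weak jump conditions \eqref{lin_cond_1}--\eqref{lin_cond_2} and hence lie in $\mathbf{S}_h(T)$, then invokes unisolvence to obtain the first identity, and derives the remaining two by differentiation. Your write-up is simply a more explicit version of the same argument.
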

\begin{proof}
By direct verifications we can see that vector functions $\bfphi^1=(1,0)^T$ and $\bfphi^2=(0,1)^T$ satisfy the weak jump conditions \eqref{lin_cond_1} and \eqref{lin_cond_2} exactly; hence, they are in the IFE space $\mathbf{ S}_h(T)$. Then the unisolvence of the IFE function leads to
the first identity in \eqref{POU_2}. And the second and third identity in \eqref{POU_2} are just the derivatives of the first one.
\end{proof}

\begin{rem}
The first identity in \eqref{POU_2} was proved in \cite{2012LinZhang} for the linear and bilinear IFE shape functions by direct verifications.
\end{rem}


For the proposed IFE shape functions, we consider the following $2$-by-$4$ matrix functions:
\begin{subequations}
\label{identity}
\begin{equation}
\label{identity_m}
\Lambda_{-}(X) = \sum_{i\in\mathcal{I}}\left( (A_i-X)^T\otimes( \Phi^{-}_{i,T}(X)) \right)+ \sum_{i\in\mathcal{I}^+}\left( (A_i-\overline{X}_i)^T\otimes(\Phi^{-}_{i,T}(X)) \right)  (\overline{M}^- -I_4),
\end{equation}
\begin{equation}
\label{identity_p}
\Lambda_{+}(X) = \sum_{i\in\mathcal{I}}\left( (A_i-X)^T \otimes(\Phi^{+}_{i,T}(X)) \right) + \sum_{i\in\mathcal{I}^-} \left( (A_i-\overline{X}_i)^T\otimes( \Phi^{+}_{i,T}(X)) \right) (\overline{M}^+ -I_4),
\end{equation}
\end{subequations}
where $\overline{X}_i$, $i\in\mathcal{I}$ are arbitrary points on $l$, and we further use them to define
\begin{equation}
\label{identity_rela}
\Lambda^+(X)=\Lambda_+(X), \;\;\; \Lambda^-(X)=\Lambda_-(X)\overline{M}^+.
\end{equation}
First we show that both $\Lambda^-(X)$ and $\Lambda^+(X)$ are well defined i.e., they are independent of $\overline{X}_i\in l$, $i\in\mathcal{I}$.

\begin{lemma}
\label{well_define}
The matrix functions $\Lambda^-(X)$ and $\Lambda^+(X)$ are independent with the points $\overline{X}_i\in l$, $i\in\mathcal{I}$.
\end{lemma}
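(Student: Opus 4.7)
\textbf{Proof plan for Lemma \ref{well_define}.}
The plan is to show that replacing each $\overline{X}_i \in l$ by another point $\overline{X}'_i \in l$ produces no change in $\Lambda_\pm(X)$, which then propagates to $\Lambda^\pm(X)$ through \eqref{identity_rela}. The key observation is that $\overline{X}'_i - \overline{X}_i$ must be parallel to $l$, so we can write $\overline{X}'_i - \overline{X}_i = \beta_i \bar{\mathbf{t}}$ for some scalar $\beta_i$. Substituting into \eqref{identity_m}--\eqref{identity_p}, the change in $\Lambda_s(X)$ becomes a linear combination of the $2\times 4$ matrices
\begin{equation*}
\bigl(\bar{\mathbf{t}}^T \otimes \Phi^{s}_{i,T}(X)\bigr)\,(\overline{M}^{s} - I_4),\qquad s = \pm.
\end{equation*}
Hence everything reduces to proving that these matrices vanish identically.

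To establish vanishing, I would use the mixed-product property of the Kronecker product. Since $\Phi^{s}_{i,T}(X)$ is $2\times 2$, we can factor
\begin{equation*}
\bar{\mathbf{t}}^T \otimes \Phi^{s}_{i,T}(X) = \bigl(1 \otimes \Phi^{s}_{i,T}(X)\bigr)\bigl(\bar{\mathbf{t}}^T \otimes I_2\bigr) = \Phi^{s}_{i,T}(X)\,\bigl(\bar{\mathbf{t}}^T \otimes I_2\bigr).
\end{equation*}
Because $\bar{\mathbf{t}} = (\bar{n}_2, -\bar{n}_1)^T$, the two rows of $\bar{\mathbf{t}}^T \otimes I_2$ are precisely $-\alpha_1^T$ and $-\alpha_2^T$, where $\alpha_1, \alpha_2$ are the eigenvectors in Lemma \ref{eigen_vec}. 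Invoking that lemma, which gives $\alpha_i^T(\overline{M}^s - I_4) = 0$ for $i=1,2$, $s=\pm$, we conclude that $(\bar{\mathbf{t}}^T \otimes I_2)(\overline{M}^s - I_4) = \mathbf{0}_{2\times 4}$, and therefore $(\bar{\mathbf{t}}^T \otimes \Phi^{s}_{i,T}(X))(\overline{M}^s - I_4) = \mathbf{0}_{2\times 4}$.

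Applying this twice: the factor of $\overline{M}^+$ appearing in $\Lambda^-(X) = \Lambda_-(X)\overline{M}^+$ is multiplied on the right and does not interfere with the above cancellation, so $\Lambda^-(X)$ is also independent of the choice $\overline{X}_i \in l$. Likewise $\Lambda^+(X) = \Lambda_+(X)$ is independent by the same computation with $s=+$. The main (minor) obstacle is simply being careful with the Kronecker-product dimensions and orientations; once the factorization $\bar{\mathbf{t}}^T \otimes \Phi^{s}_{i,T}(X) = \Phi^{s}_{i,T}(X)(\bar{\mathbf{t}}^T \otimes I_2)$ is in hand, Lemma \ref{eigen_vec} delivers the result immediately.
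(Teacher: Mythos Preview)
Your proposal is correct and follows essentially the same route as the paper: both take two points $\overline{X}_i,\overline{X}'_i\in l$, observe that their difference lies in the tangential direction $\bar{\mathbf t}$, and then invoke Lemma~\ref{eigen_vec} to conclude that the resulting expression $(\bar{\mathbf t}^T\otimes \Phi^s_{i,T}(X))(\overline{M}^s-I_4)$ vanishes. Your use of the mixed-product factorization $\bar{\mathbf t}^T\otimes \Phi^s_{i,T}(X)=\Phi^s_{i,T}(X)(\bar{\mathbf t}^T\otimes I_2)$ is a slightly cleaner packaging of the same computation the paper carries out.
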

\begin{proof}
Let $\overline{X}_i$ and $\overline{X}'_i$ be two points on $l$. Then, by Lemma \ref{eigen_vec}, we have
\begin{equation}
\begin{split}
&\left( (A_i-\overline{X}_i)^T\otimes( \Phi^{s}_i(X)) \right) (\overline{M}^s -I_4) - \left( (A_i-\overline{X}'_i)^T\otimes( \Phi^{s}_i(X)) \right) (\overline{M}^s -I_4) \\
=&\| \overline{X}_i - \overline{X}'_i \| \left[ \Phi^{s}_i, \Phi^{s}_i \right] ~ \left[\alpha_1,\alpha_2\right] (\overline{M}^s -I_4) = \mathbf{ 0}, ~~~ s=\pm,
\end{split}
\end{equation}
where, as in Lemma \ref{eigen_vec}, $\alpha_1=(-\bar{n}_2,0,\bar{n}_1,0)^T$ and $\alpha_2=(0,-\bar{n}_2,0,\bar{n}_1)^T$. Hence,
by \eqref{identity_p}-\eqref{identity_rela}, functions $\Lambda^-(X)$ and $\Lambda^+(X)$ are independent of $\overline{X}_i$, $i\in\mathcal{I}$.
\end{proof}

Lemma \ref{well_define} allows us to consolidate $\overline{X}_i$, $i\in\mathcal{I}$ in $\Lambda^-(X)$ and $\Lambda^+(X)$ into a single point $\overline{X}\in l$. Then, by using \eqref{POU_2}, we rewrite these two functions as follows:
\begin{subequations}
\label{identity}
\begin{equation}
\label{identity_mm}
\Lambda^{-}(X) = \sum_{i\in\mathcal{I}^-} (A_i-\overline{X})^T \otimes \Phi^-_i(X) \overline{M}^{+} + \sum_{i\in\mathcal{I}^+} (A_i-\overline{X})^T\otimes \Phi^-_i(X) - \left( (X-\overline{X})^T\otimes I_2 \right) \overline{M}^+.
\end{equation}
\begin{equation}
\label{identity_pp}
\Lambda^{+}(X) = \sum_{i\in\mathcal{I}^-} (A_i-\overline{X})^T \otimes \Phi^+_i(X) \overline{M}^{+} + \sum_{i\in\mathcal{I}^+} (A_i-\overline{X})^T\otimes \Phi^+_i(X) -  (X-\overline{X})^T\otimes I_2 ,
\end{equation}
\end{subequations}
For every fixed $\overline{X}$, we consider the following piecewise 2-by-4 matrix function:
\begin{equation}
\label{V}
V(X) =
\left\{\begin{array}{cc}
(X-\overline{X})^T\otimes I_2 & \text{if} \; X\in \overline{T}^+ , \\
\left( (X-\overline{X})^T\otimes I_2 \right) \overline{M}^+& \text{if} \; X\in \overline{T}^- .
\end{array}\right.
\end{equation}

\begin{lemma}
\label{V}
On every interface element $T \in \mathcal{T}_h^i$, each column of $V(X)$ is in the local IFE space $\mathbf{ S}_h(T)$.
\end{lemma}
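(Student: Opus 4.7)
The plan is to show that each column of $V(X)$ satisfies every requirement in the definition of the local IFE space $\mathbf{S}_h(T)$: it is a piecewise vector polynomial of the required type on $\overline{T}^{\pm}$, and it satisfies the weak continuity condition \eqref{lin_cond_1} and the weak stress jump condition \eqref{lin_cond_2}.

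First I would verify the polynomial structure. Write $V^+(X)=(X-\overline{X})^T\otimes I_2$ explicitly as a $2\times 4$ matrix whose four columns are $(x-\overline{x}_1,0)^T$, $(0,x-\overline{x}_1)^T$, $(y-\overline{y}_2,0)^T$, $(0,y-\overline{y}_2)^T$; these are all linear vector functions, hence belong to $\mathbf{\Pi}_T$ in each of the three cases (linear, bilinear, rotated $Q_1$). Since $V^-=V^+\overline{M}^+$ with $\overline{M}^+$ a constant matrix, each column of $V^-$ is a linear combination of columns of $V^+$, so is also a linear vector function in $\mathbf{\Pi}_T$. As a byproduct, the second-degree coefficient vector $d(\cdot)$ vanishes on both sides, so the second part of \eqref{lin_cond_1} (needed for the bilinear and rotated $Q_1$ cases) holds trivially.

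Next I would verify the continuity on $l$. The difference is
\[
V^-(X)-V^+(X)=\bigl((X-\overline{X})^T\otimes I_2\bigr)(\overline{M}^+-I_4),\qquad X\in l.
\]
For $X\in l$ we can write $X-\overline{X}=s\bar{\mathbf{t}}=s(\bar{n}_2,-\bar{n}_1)^T$ for some scalar $s$. A direct computation shows that the two rows of $\bar{\mathbf{t}}^T\otimes I_2$ are precisely $-\alpha_1^T$ and $-\alpha_2^T$, where $\alpha_1,\alpha_2$ are the eigenvectors from Lemma \ref{eigen_vec}. By that lemma $\alpha_i^T(\overline{M}^+-I_4)=0$ for $i=1,2$, which forces $V^-(X)=V^+(X)$ on $l$. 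This establishes \eqref{lin_cond_1}.

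For the stress jump condition \eqref{lin_cond_2}, I would compute $\mathrm{Vec}(\nabla v^{k,+})=e_k$ column by column, which gives $[\mathrm{Vec}(\nabla v^{k,+})]_{k=1}^4=I_4$. Then for the corresponding column of $V^-$, the relation $v^{k,-}=V^+\,\overline{M}^+e_k$ yields $\mathrm{Vec}(\nabla v^{k,-})=\overline{M}^+\mathrm{Vec}(\nabla v^{k,+})$. Using $\overline{M}^+=(\overline{N}^-)^{-1}\overline{N}^+$, this rearranges to
\[
\overline{N}^-\mathrm{Vec}(\nabla v^{k,-})=\overline{N}^+\mathrm{Vec}(\nabla v^{k,+}),
\]
which, by reading rows $1$--$2$ of $\overline{N}^\pm$ exactly as in the proof of Lemma \ref{MM_relat}, is the identity $\sigma^-(v^{k,-})\bar{\mathbf{n}}=\sigma^+(v^{k,+})\bar{\mathbf{n}}$. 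Because every column is linear on each side, these stress vectors are constant, so the identity holds at any point of $l$, in particular at $F$, giving \eqref{lin_cond_2}.

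The only nontrivial step is the reduction to Lemma \ref{eigen_vec} in showing continuity across $l$; the rest is bookkeeping with the Kronecker identity \eqref{kro_vec} and the algebraic definitions of $\overline{N}^\pm$ and $\overline{M}^\pm$. Once all three conditions are checked, the unisolvence of the IFE basis (Theorem \ref{unisolvence}, or its analog for the linear case) lets us represent each column of $V$ in terms of $\{\bfphi_{i,T}\}$ and conclude the column lies in $\mathbf{S}_h(T)$.
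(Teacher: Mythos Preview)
Your proposal is correct and follows essentially the same route as the paper. The paper's proof is more compressed: it observes $V^{-}(\overline{X})=V^{+}(\overline{X})=0$, writes the single matrix identity
\[
\begin{bmatrix}\partial_{x_1}V^{+}\\ \partial_{x_2}V^{+}\end{bmatrix}=I_4=\overline{M}^{+}\overline{M}^{-}=\begin{bmatrix}\partial_{x_1}V^{-}\\ \partial_{x_2}V^{-}\end{bmatrix}\overline{M}^{-},
\]
and notes $d(V)=0_{2\times4}$, remarking that these together give \eqref{lin_cond_1} and \eqref{lin_cond_2}. Your argument unpacks the same content: rows 1--2 of the $\overline{N}^{\pm}$ identity give the stress jump, while your use of Lemma~\ref{eigen_vec} to kill $(\bar{\mathbf{t}}^{T}\otimes I_2)(\overline{M}^{+}-I_4)$ is exactly rows 3--4 (tangential continuity) repackaged. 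The only cosmetic difference is that the paper gets continuity on $l$ from ``match at one point plus matching tangential derivatives of linear functions'', whereas you evaluate $V^{-}-V^{+}$ on $l$ directly; both are valid and rely on the same eigenvector fact.
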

\begin{proof}
Clearly, each column of $V(X)$ restricted on either $\overline{T}^+$ or $\overline{T}^-$ is in the corresponding polynomial space $\mathbf{ \Pi}_T$. Furthermore, we note that $V^-(\overline{X})=V^+(\overline{X})$ and
$$
\left[\begin{array}{c} \partial_{x_1} V^+ \\ \partial_{x_2} V^+ \end{array}\right] = I_4=\overline{M}^+\overline{M}^- = \left[\begin{array}{c} \partial_{x_1} V^- \\ \partial_{x_2} V^- \end{array}\right] \overline{M}^-,
$$
which, together with the fact $d(V)=0_{2\times4}$, shows each column of $V$ satisfies \eqref{lin_cond_1} and \eqref{lin_cond_2} simultaneously; thus, it is in the corresponding IFE space.
\end{proof}

\begin{thm}
\label{funda_identity}
For every interface element $T\in \mathcal{T}_h$, we have the identities $\Lambda_+=0_{2\times4}$ and $\Lambda_-=0_{2\times4}$. And let $I^1=[I_2,0_{2\times2}]$, $I^2=[0_{2\times2},I_2]$, for $j, k=1,2$, $s=\pm$, we also have
\begin{subequations}
\label{identity_d}
\begin{equation}
\label{identity_d1}
\sum_{i\in\mathcal{I}}\left( (A_i-X)^T \otimes( \partial_{x_j}\Phi^{s}_i(X)) \right) + \sum_{i\in\mathcal{I}^{s'}} \left( (A_i-\overline{X}_i)^T\otimes( \partial_{x_j}\Phi^{s}_i(X)) \right) (\overline{M}^s -I_4) = I^j,
\end{equation}
\begin{equation}
\label{identity_d2}
\sum_{i\in\mathcal{I}}\left( (A_i-X)^T\otimes( \partial_{x_jx_k}\Phi^{s}_i(X)) \right)+ \sum_{i\in\mathcal{I}^{s'}}\left( (A_i-\overline{X}_i)^T\otimes( \partial_{x_jx_k}\Phi^{s}_i(X)) \right)  (\overline{M}^s -I_4) = 0_{2\times4}.
\end{equation}
\end{subequations}
\end{thm}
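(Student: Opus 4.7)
The plan is to exploit Lemma \ref{V}, which says that each column of the piecewise matrix function $V(X)$ lies in $\mathbf{S}_h(T)$, so $V$ can be expanded against the IFE basis. First I would establish the key rewriting
\begin{equation*}
(A_i-\overline{X})^T\otimes \Phi^{s}_{i,T}(X) \;=\; \Phi^{s}_{i,T}(X)\bigl((A_i-\overline{X})^T\otimes I_2\bigr),
\end{equation*}
which is an immediate consequence of the block-column structure of the Kronecker product. This lets me recognise both forms of $\Lambda^{\pm}$ in \eqref{identity_mm}--\eqref{identity_pp} as standard IFE-basis expansions of $V^{\pm}$.

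Next I would prove $\Lambda^{+} = 0_{2\times 4}$. Using the rewriting above together with the definition \eqref{V} of $V$, the sums over $\mathcal{I}^{+}$ and $\mathcal{I}^{-}$ in \eqref{identity_pp} combine into $\sum_{i\in\mathcal{I}}\Phi^{+}_{i,T}(X)\,V(A_i)$. Because each column of $V$ belongs to $\mathbf{S}_h(T)$ by Lemma \ref{V}, and because of the nodal property $\Phi_{i,T}(A_j)=\delta_{i,j}I_2$ in \eqref{ife_vec_bas}, this sum equals $V^{+}(X)=(X-\overline{X})^T\otimes I_2$, which cancels the last term of \eqref{identity_pp}. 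An identical argument on $\overline{T}^{-}$ gives $\Lambda^{-}(X)=0_{2\times 4}$; since $\Lambda^{-}=\Lambda_{-}\overline{M}^{+}$ and $\overline{M}^{+}$ is invertible by \eqref{M_det}, I conclude $\Lambda_{\pm}=0_{2\times 4}$. Note that Lemma \ref{well_define} justifies consolidating all the $\overline{X}_i$ into a single point $\overline{X}\in l$ at the start, so no information is lost.

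To derive the derivative identities \eqref{identity_d1} and \eqref{identity_d2}, I would simply differentiate the identity $\Lambda_{s}(X)\equiv 0_{2\times 4}$ with respect to $x_j$ and $x_jx_k$ in its original form \eqref{identity_m}--\eqref{identity_p}, where the only $X$-dependent pieces are $(A_i-X)^T$ and $\Phi^{s}_{i,T}(X)$. Applying the product rule for Kronecker products gives
\begin{equation*}
\partial_{x_j}\bigl[(A_i-X)^T\otimes \Phi^{s}_{i,T}\bigr] \;=\; -e_j^T\otimes \Phi^{s}_{i,T} + (A_i-X)^T\otimes \partial_{x_j}\Phi^{s}_{i,T},
\end{equation*}
and summing over $i\in\mathcal{I}$, the partition of unity in Theorem \ref{POU} collapses $-e_j^T\otimes\sum_i\Phi^{s}_{i,T}=-e_j^T\otimes I_2=-I^j$, which rearranges to \eqref{identity_d1}. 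Differentiating once more and using $\sum_i\partial_{x_j}\Phi^{s}_{i,T}=0_{2\times 2}$ from \eqref{POU_2} yields \eqref{identity_d2}.

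The main obstacle I expect is simply the bookkeeping: keeping the $\mathcal{I}^{+}/\mathcal{I}^{-}$ splits aligned on the two sides of $\Gamma$, tracking the factor of $\overline{M}^{\pm}-I_4$ versus $\overline{M}^{\pm}$, and verifying that the Kronecker-product/matrix-product interchange identity cleanly converts the $\Lambda^{s}$ expressions into IFE expansions of $V^{s}$. Once that identification is pinned down, the remaining steps reduce to invoking Lemma \ref{V}, Lemma \ref{well_define}, and differentiating term-by-term using Theorem \ref{POU}.
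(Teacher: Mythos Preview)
Your proposal is correct and follows essentially the same route as the paper. The paper defines the piecewise matrix function $\Lambda(X)$, observes via Lemma \ref{V} that its columns lie in $\mathbf{S}_h(T)$, checks that $\Lambda(A_i)=0_{2\times4}$, and invokes unisolvence; your phrasing recognises $\Lambda^{s}$ directly as $\sum_{i}\Phi^{s}_{i,T}V(A_i)-V^{s}$, i.e.\ the IFE interpolant of $V$ minus $V$, which vanishes for the same reason, and then you differentiate exactly as the paper does (supplying a bit more detail on how the $I^j$ term arises via Theorem \ref{POU}).
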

\begin{proof}
We construct a piecewise defined $2$-by-$4$ matrix function:
\begin{equation}
\label{lambda}
\Lambda(X) =
\left\{\begin{array}{cc}
\Lambda^-(X) & \text{if} \; X\in \overline{T}^-, \\
\Lambda^+(X) & \text{if} \; X\in \overline{T}^+.

\end{array}\right.
\end{equation}
According to \eqref{identity_pp} and \eqref{identity_mm}, the first two terms in $\Lambda^-(X)$ and $\Lambda^+(X)$ are the linear combination of the IFE shape functions with the same coefficients. Lemma \ref{V} shows that their last terms together form a function in the local IFE space. So each column of the piecewise defined matrix function \eqref{lambda} belongs to $\mathbf{ S}_h(T)$ for every interface element $T$. In addition, by \eqref{identity_p} and \eqref{identity_m}, we can see that $\Lambda(A_i)=0_{2\times4}$, $i\in\mathcal{I}$. Hence, by the unisolvence of IFE functions, we know that each column of $\Lambda(X)$ must be $0_{2\times1}$ which leads to
$\Lambda_{\pm}(X) = 0_{2\times4}$ because $\overline{M}^+$ is non-singular. Furthermore, \eqref{identity_d1} and \eqref{identity_d2} can be obtained by differentiating \eqref{identity_p} and \eqref{identity_m}.
\end{proof}


\subsection{Interpolation Error Analysis}
In this subsection, we use the results above to show the optimal approximation capabilities of the proposed IFE spaces by estimating the errors of the Lagrange type interpolation operators. Again, we assume the conditions for the unisolvence of IFE shape functions are satisfied, i.e., $F=F_0$ given in Lemma \ref{uni_lem_1} in the construction of the bilinear and rotated $Q_1$ IFE shape functions and the conditions in Theorem 4.7 in \cite{2012LinZhang} are satisfied for the linear case.
To study the approximation capabilities, we consider the following local Lagrange type interpolation operator: $I_{h,T}:\mathbf{ C}^0(T)\rightarrow \mathbf{ S}_h(T)$ with
\begin{equation}
\label{loc_interp}
I_{h,T}\mathbf{ u}(X)= \begin{cases}
\sum_{i\in\mathcal{I}}\Psi_{i,T}(X)\mathbf{ u}(A_i), & \text{if~} T \in \mathcal{T}_h^n, \\
\sum_{i\in\mathcal{I}}\Phi_{i,T}(X)\mathbf{ u}(A_i), & \text{if~} T \in \mathcal{T}_h^i;
\end{cases} ~~\forall \bfu \in \mathbf{ C}^0(T).
\end{equation}
The global interpolation operator $I_h$ defined on $\mathbf{ C}^0(\Omega)$ can defined correspondingly such that
\begin{equation}
\label{glob_interp}
(I_h \mathbf{ u})|_{T}=I_{h,T} \mathbf{ u}, \;\;\; \forall T\in \mathcal{T}_h,~~\forall \bfu \in \mathbf{ C}^0(\Omega).
\end{equation}
Applying the standard scaling argument \cite{1994BrennerScott,1978Ciarlet,1992RanacherTurek} onto each component of the vector function
$\mathbf{ u}=(u_1,u_2)^T \in \mathbf{H}^2(T)$, we can show that for all the non-interface elements, there holds
\begin{equation}
\label{non_int_interp_est}
\| I_{h,T} u_i - u_i \|_{0,T}+h| I_{h,T} u_i - u_i |_{1,T} + h^2 | I_{h,T} u_i - u_i |_{2,T} \leqslant Ch^2|u_i|_{2,T}, ~~ i=1,2,~~ \forall ~T \in \mathcal{T}_h^n.
\end{equation}
However on interface elements $T\in\mathcal{T}^i_h$, due to the jump conditions \eqref{jump_contin} and \eqref{jump_stress}, the two components of $\mathbf{ u}$ have to be treated together. To estimate the errors, we give the following two theorems on the expansion of the interpolation operator on $T\in\mathcal{T}^i_h$.

\begin{thm}
\label{interpolation_exp}
On each interface element $T\in\mathcal{T}^i_h$, assume $\mathbf{ u}\in\mathbf{PC}^2_{int}(T)$, then, for any $\overline{X}_i\in l$,
the following expansions hold for every $X \in T_{\ast}^s$:
\begin{subequations}\label{interpolation_exp_1}
\begin{equation}
\label{interpolation_exp_1_1}
I_{h,T}\mathbf{ u}(X)-\mathbf{ u}(X) = \sum_{i\in\mathcal{I}^{s'}} \Phi_{i,T}(X) (\mathbf{ E}_i^s(X) + \mathbf{ F}_i^s(X))+\sum_{i\in\mathcal{I}} \Phi_{i,T}(X) \mathbf{ R}_i^s(X), ~s = \pm,
\end{equation}
\begin{equation}
\label{interpolation_exp_1_2}
\partial_{x_j}( I_{h,T} \mathbf{ u}(X) - \mathbf{ u}(X) )=\sum_{i\in\mathcal{I}^{s'}} \partial_{x_j}\Phi_{i,T}(X)(\mathbf{ E}_i^s(X) + \mathbf{ F}_i^s(X)) + \sum_{i\in\mathcal{I}} \partial_{x_j}\Phi_{i,T}(X)\mathbf{ R}_i^s(X), ~s = \pm,
\end{equation}
\begin{equation}
\label{interpolation_exp_1_3}
\partial_{x_jx_k} I_{h,T} \mathbf{ u}(X)=\sum_{i\in\mathcal{I}^{s'}} \partial_{x_jx_k}\Phi_{i,T}(X)(\mathbf{ E}_i^s(X) + \mathbf{ F}_i^s(X)) + \sum_{i\in\mathcal{I}} \partial_{x_jx_k}\Phi_{i,T}(X)\mathbf{ R}_i^s(X), ~s = \pm,
\end{equation}
\end{subequations}
where $j, k = 1,2$, $\mathbf{ R}^s_i(X)$ are given in \eqref{expan_1_rem}, \eqref{expan_2_rem} and
\begin{equation}
\begin{split}
\label{EF}
&\mathbf{E}^s_i(X)= \left( (A_i - \widetilde{Y}_i)^T\otimes I_2 \right)\left( M^s(\widetilde{Y}_i)-\overline{M}^s \right)\emph{Vec}(\nabla\mathbf{ u}^s(X)),  \\
&\mathbf{F}^s_i(X)= -\left( (\widetilde{Y}_i-\overline{X}_i)^T\otimes I_2 \right)\left(\overline{M}^s - I_4 \right)\emph{Vec}(\nabla\mathbf{ u}^s(X)).
\end{split}
\end{equation}
\end{thm}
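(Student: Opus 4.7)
The plan is to substitute the multi-point Taylor expansions established earlier directly into the interpolation formula $I_{h,T}\mathbf{u}(X)=\sum_{i\in\mathcal{I}}\Phi_{i,T}(X)\mathbf{u}(A_i)$, and then use the partition-of-unity and fundamental identities of Theorem \ref{funda_identity} to collapse the polynomial pieces and leave only the remainders plus the two error corrections $\mathbf{E}^s_i$ and $\mathbf{F}^s_i$.

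For a fixed $X\in T_\ast^s$, I would split the sum over $i$ into $i\in\mathcal{I}^s$ and $i\in\mathcal{I}^{s'}$, and substitute \eqref{expan_1} in the first group and \eqref{expan_2} in the second. The expansion on the $\mathcal{I}^{s'}$ side contains the term $((A_i-\widetilde{Y}_i)^T\otimes I_2)(M^s(\widetilde{Y}_i)-I_4)\mathrm{Vec}(\nabla\mathbf{u}^s(X))$, which does not yet match the structure of $\Lambda_s$. The key algebraic step is to add and subtract to rewrite
\[
((A_i-\widetilde{Y}_i)^T\otimes I_2)(M^s(\widetilde{Y}_i)-I_4)
=\mathbf{E}^s_i(X)/\mathrm{Vec}(\nabla\mathbf{u}^s)+((A_i-\overline{X}_i)^T\otimes I_2)(\overline{M}^s-I_4)+\mathbf{F}^s_i(X)/\mathrm{Vec}(\nabla\mathbf{u}^s),
\]
so that what remains in the ``polynomial part'' is precisely the structure appearing inside $\Lambda_s$, while $\mathbf{E}^s_i$ isolates the curvature-type error $M^s(\widetilde{Y}_i)-\overline{M}^s$ and $\mathbf{F}^s_i$ isolates the vertical displacement $\widetilde{Y}_i-\overline{X}_i$ between the interface and the chord $l$.

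Once this rearrangement is made, I will use the simple Kronecker identity $\Phi_{i,T}(X)\bigl((A_i-X)^T\otimes I_2\bigr)=(A_i-X)^T\otimes\Phi_{i,T}(X)$ (and analogously with $A_i-\overline{X}_i$) to convert the sum into exactly the form of $\Lambda_s(X)$. By Theorem \ref{funda_identity}, $\Lambda_\pm(X)=0_{2\times 4}$, so the contribution of $\mathrm{Vec}(\nabla\mathbf{u}^s(X))$ vanishes entirely; meanwhile the zeroth-order term $\mathbf{u}^s(X)$ collapses via the partition of unity $\sum_{i\in\mathcal{I}}\Phi^s_{i,T}(X)=I_2$ (Theorem \ref{POU}) to give $\mathbf{u}(X)$. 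What is left is exactly the right-hand side of \eqref{interpolation_exp_1_1}.

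For \eqref{interpolation_exp_1_2} and \eqref{interpolation_exp_1_3}, I will repeat the same substitution but with $\partial_{x_j}\Phi_{i,T}$ or $\partial_{x_jx_k}\Phi_{i,T}$ in place of $\Phi_{i,T}$ (the values $\mathbf{u}(A_i)$ being constants, so differentiation only hits the shape functions). The only difference is that, instead of the partition of unity and $\Lambda_\pm=0$, the polynomial part now collapses using the derivative identities in Theorem \ref{POU} together with \eqref{identity_d1}–\eqref{identity_d2}: the $(A_i-X)$ sum produces $I^j\,\mathrm{Vec}(\nabla\mathbf{u}^s(X))=\partial_{x_j}\mathbf{u}(X)$ in the first-derivative case and $0_{2\times 4}$ in the second-derivative case, yielding exactly the stated expressions after cancelling $\partial_{x_j}\mathbf{u}(X)$ against $\partial_{x_j}\mathbf{u}$ on the left. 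The main obstacle is simply bookkeeping: keeping the Kronecker orientations consistent and verifying that the ``add-subtract'' produces precisely the $\mathbf{E}^s_i,\mathbf{F}^s_i$ as defined in \eqref{EF}; once this is done, the identities of Section 5.2 do all the real work.
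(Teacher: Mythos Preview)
Your proposal is correct and follows essentially the same approach as the paper: substitute the expansions \eqref{expan_1}--\eqref{expan_2} into $I_{h,T}\mathbf{u}$, use the Kronecker identity $\Phi_{i,T}(X)(\mathbf{r}^T\otimes I_2)=\mathbf{r}^T\otimes\Phi_{i,T}(X)$, collapse the polynomial part with Theorem~\ref{POU} and Theorem~\ref{funda_identity} (and its derivatives \eqref{identity_d1}--\eqref{identity_d2}), and isolate $\mathbf{E}^s_i,\mathbf{F}^s_i$ via the splitting $A_i-\overline{X}_i=(A_i-\widetilde{Y}_i)+(\widetilde{Y}_i-\overline{X}_i)$. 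The only cosmetic difference is the order of operations---you perform the add--subtract decomposition first and then invoke $\Lambda_s=0$, whereas the paper first applies $\Lambda_s=0$ to rewrite the $(A_i-X)$ sum and then splits $A_i-\overline{X}_i$; the algebra is identical.
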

\begin{proof}
The argument is similar to \cite{2016GuoLin} by applying the fundamental identity Theorem \ref{funda_identity} onto the interpolation operator \eqref{loc_interp}. Expanding the nodal values $\mathbf{ u}(A_i)$, $i\in\mathcal{I}$, about $X\in T_{\ast}^s$ in the interpolation operator \eqref{loc_interp} by \eqref{expan_1} and \eqref{expan_2}, we obtain
\begin{equation}
\begin{split}
\label{interpolation_exp_eq_1}
I_{h,T}\mathbf{ u}(X) 
&= \sum_{i\in\mathcal{I}}\Phi_{i,T}(X) \mathbf{ u}(X) + \left( \sum_{i\in\mathcal{I}} \Phi_{i,T}(X) \left((A_i-X)^T\otimes I_2\right) \right)\textrm{Vec}(\nabla\mathbf{ u}^s(X)) \\
&+ \left( \sum_{i\in\mathcal{I}^{s'}}\Phi_{i,T}(X) \left((A_i-\widetilde{Y}_i)^T\otimes I_2\right)(M^s - I_4) \right)\textrm{Vec}(\nabla\mathbf{ u}^s(X)) + \sum_{i\in\mathcal{I}}\Phi_{i,T}(X)\mathbf{ R}^s_i.
\end{split}
\end{equation}
Note that for any vector $\mathbf{ r}\in\mathbb{R}^{2\times1}$, there holds
\begin{equation}
\label{interpolation_exp_eq_2}
\Phi_{i,T}(X) \left(\mathbf{ r}^T\otimes I_2\right)= \mathbf{ r}^T\otimes \Phi_{i,T}(X).
\end{equation}
Then we apply Theorem \ref{funda_identity} onto the second term in \eqref{interpolation_exp_eq_1} to have
\begin{equation}
\begin{split}
\label{interpolation_exp_eq_3}
I_{h,T}\mathbf{ u}(X) &= \sum_{i\in\mathcal{I}}\Phi_{i,T}(X) \mathbf{ u}(X) - \left( \sum_{i\in\mathcal{I}^{s'}}\left((A_i - \overline{X}_i)^T\otimes \Phi_{i,T}(X) \right)(\overline{M}^{s}-I_4)  \right)\textrm{Vec}(\nabla\mathbf{ u}^s(X)) \\
&+ \left( \sum_{i\in\mathcal{I}^{s'}} \left((A_i-\widetilde{Y}_i)^T\otimes \Phi_{i,T}(X) \right)(M^s - I_4) \right)\textrm{Vec}(\nabla\mathbf{ u}^s(X)) + \sum_{i\in\mathcal{I}}\Phi_{i,T}(X)\mathbf{ R}^s_i.
\end{split}
\end{equation}
Then, \eqref{interpolation_exp_1_1} follows by applying partition of unity, the fact $A_i-\overline{X}_i=(A_i-\widetilde{Y}_i)+(\widetilde{Y}_i-\overline{X}_i)$, and the identity \eqref{interpolation_exp_eq_2} to
\eqref{interpolation_exp_eq_3}. For \eqref{interpolation_exp_1_2}, we apply \eqref{expan_1} and \eqref{expan_2} to $\partial_{x_j}I_{h,T}\mathbf{ u}(X)=\sum_{i\in\mathcal{I}}\partial_{x_j}\Phi_{i,T}(X)\mathbf{ u}(A_i)$ to obtain
\begin{equation*}
\begin{split}
\label{interpolation_exp_eq_4}
\partial_{x_j} I_{h,T}\mathbf{ u}(X)
&= \sum_{i\in\mathcal{I}}\partial_{x_j}\Phi_{i,T}(X) \mathbf{ u}(X) + \left( \sum_{i\in\mathcal{I}} \partial_{x_j}\Phi_{i,T}(X) \left((A_i-X)^T\otimes I_2\right) \right)\textrm{Vec}(\nabla\mathbf{ u}^s(X)) \\
&+ \left( \sum_{i\in\mathcal{I}^{s'}} \partial_{x_j}\Phi_{i,T}(X) \left((A_i-\widetilde{Y}_i)^T\otimes I_2\right)(M^s - I_4) \right)\textrm{Vec}(\nabla\mathbf{ u}^s(X)) + \sum_{i\in\mathcal{I}} \partial_{x_j}\Phi_{i,T}(X)\mathbf{ R}^s_i.
\end{split}
\end{equation*}
By using \eqref{POU_2}, \eqref{identity_d1} and \eqref{interpolation_exp_eq_2} in the above, we have
\begin{equation*}
\begin{split}
\label{interpolation_exp_eq_5}
\partial_{x_j}I_{h,T}\mathbf{ u}(X) &= I^j \textrm{Vec}(\nabla\mathbf{ u}^s(X))  - \left( \sum_{i\in\mathcal{I}^{s'}}\left((A_i - \overline{X}_i)^T\otimes \partial_{x_j}\Phi_{i,T}(X) \right)(\overline{M}^{s}-I_4)  \right)\textrm{Vec}(\nabla\mathbf{ u}^s(X)) \\
&+ \left( \sum_{i\in\mathcal{I}^{s'}} \left((A_i-\widetilde{Y}_i)^T\otimes \partial_{x_j}\Phi_{i,T}(X)\right)(M^s - I_4) \right)\textrm{Vec}(\nabla\mathbf{ u}^s(X)) + \sum_{i\in\mathcal{I}} \partial_{x_j}\Phi_{i,T}(X)\mathbf{ R}^s_i,
\end{split}
\end{equation*}
which is in the same format as \eqref{interpolation_exp_eq_3} because $I^j \textrm{Vec}(\nabla\mathbf{ u}^s(X)=\partial_{x_j}\mathbf{ u}(X)$.
Therefore, \eqref{interpolation_exp_1_2} follows from arguments used to derive \eqref{interpolation_exp_1_1} from \eqref{interpolation_exp_eq_3}.
Finally, \eqref{interpolation_exp_1_3} can be derived very similarly by applying \eqref{expan_1} and \eqref{expan_2} in $\partial_{x_jx_k}I_{h,T}\mathbf{ u}(X)=\sum_{i\in\mathcal{I}}\partial_{x_jx_k}\Phi_{i,T}(X)\mathbf{ u}(A_i)$ and then using \eqref{POU_2}, \eqref{identity_d2} and \eqref{interpolation_exp_eq_2}.
\end{proof}

\begin{rem}
\label{rem_interpolation_exp_1_3}
We note that \eqref{interpolation_exp_1_3} is trivial for the linear IFE shape functions $\Phi_{i,T}$ since each side is simply a zero vector. And the non-trivial one is the bilinear case with $j=1$, $k=2$ and the rotated-$Q_1$ case with $j=k=1$ or $j = k = 2$.
\end{rem}

In addition, for $X\in T_{\ast}$, we consider a simpler expansion as the following.
\begin{thm}
\label{interpolation_exp_T_tild}
On each interface element $T\in\mathcal{T}^i_h$, assume $\mathbf{ u}\in\mathbf{PC}^2_{int}(T)$, the following expansions hold for every $X \in T_{\ast}$:
\begin{subequations}\label{interpolation_exp_1}
\begin{equation}
\label{interpolation_exp_T_tild_1}
I_{h,T}\mathbf{ u}(X)-\mathbf{ u}(X) = \sum_{i\in\mathcal{I}} \Phi_{i,T}(X)\widetilde{ \mathbf{ R}}_i(X),
\end{equation}
\begin{equation}
\label{interpolation_exp_T_tild_2}
\partial_{x_j}( I_{h,T} \mathbf{ u}(X) - \mathbf{ u}(X))= -\partial_{x_j}\mathbf{ u}(X)  + \sum_{i\in\mathcal{I}} \partial_{x_j}\Phi_{i,T}(X)\widetilde{\mathbf{ R}}_i(X),
\end{equation}
\begin{equation}
\label{interpolation_exp_T_tild_3}
\partial_{x_jx_k}( I_{h,T} \mathbf{ u}(X) - \mathbf{ u}(X) )= -\partial_{x_jx_k}\mathbf{ u}(X)
+ \sum_{i\in\mathcal{I}} \partial_{x_jx_k}\Phi_{i,T}(X)\widetilde{\mathbf{ R}}_i(X),
\end{equation}
\end{subequations}
where $j, k = 1,2$ and $\widetilde{\mathbf{ R}}_i$ is given in \eqref{T_tild_exp_eq}.
\end{thm}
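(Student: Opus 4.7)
The plan is to mimic the derivation of Theorem \ref{interpolation_exp} but use the much simpler one-term expansion from Theorem \ref{T_tild_exp} that is valid on $T_{\ast}$. The key observation is that since $X \in T_\ast$, we do not need the jump-condition-based expansion \eqref{expan_2}, so there are no $\mathbf{E}^s_i$ or $\mathbf{F}^s_i$ remainders to handle; only $\widetilde{\mathbf{R}}_i(X)$ appears, and all the work reduces to the partition of unity identities from Theorem \ref{POU}.

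First, I would start from the definition of the local Lagrange interpolation operator on an interface element, $I_{h,T}\mathbf{u}(X) = \sum_{i\in\mathcal{I}}\Phi_{i,T}(X)\mathbf{u}(A_i)$, and substitute Theorem \ref{T_tild_exp}, that is $\mathbf{u}(A_i) = \mathbf{u}(X) + \widetilde{\mathbf{R}}_i(X)$, valid for every $X \in T_\ast$. This yields
\begin{equation*}
I_{h,T}\mathbf{u}(X) = \Big(\sum_{i\in\mathcal{I}}\Phi_{i,T}(X)\Big)\mathbf{u}(X) + \sum_{i\in\mathcal{I}}\Phi_{i,T}(X)\widetilde{\mathbf{R}}_i(X).
\end{equation*}
Applying the first partition-of-unity identity $\sum_{i\in\mathcal{I}}\Phi_{i,T}(X) = I_2$ from \eqref{POU_2} and subtracting $\mathbf{u}(X)$ from both sides delivers \eqref{interpolation_exp_T_tild_1} directly.

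For \eqref{interpolation_exp_T_tild_2} and \eqref{interpolation_exp_T_tild_3}, I would apply $\partial_{x_j}$ and $\partial_{x_j x_k}$ to $I_{h,T}\mathbf{u}(X) = \sum_{i\in\mathcal{I}}\Phi_{i,T}(X)\mathbf{u}(A_i)$ (noting that the nodal values $\mathbf{u}(A_i)$ are constants independent of $X$), substitute the same expansion $\mathbf{u}(A_i) = \mathbf{u}(X) + \widetilde{\mathbf{R}}_i(X)$, and then invoke the remaining partition-of-unity identities in \eqref{POU_2}, namely $\sum_{i\in\mathcal{I}} \partial_{x_j}\Phi_{i,T}(X) = \mathbf{0}_{2\times 2}$ and $\sum_{i\in\mathcal{I}} \partial_{x_jx_k}\Phi_{i,T}(X) = \mathbf{0}_{2\times 2}$. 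These kill the terms multiplying the constant factor $\mathbf{u}(X)$, leaving only
\begin{equation*}
\partial_{x_j} I_{h,T}\mathbf{u}(X) = \sum_{i\in\mathcal{I}} \partial_{x_j}\Phi_{i,T}(X)\widetilde{\mathbf{R}}_i(X), \qquad \partial_{x_jx_k} I_{h,T}\mathbf{u}(X) = \sum_{i\in\mathcal{I}} \partial_{x_jx_k}\Phi_{i,T}(X)\widetilde{\mathbf{R}}_i(X),
\end{equation*}
from which subtracting $\partial_{x_j}\mathbf{u}(X)$ and $\partial_{x_jx_k}\mathbf{u}(X)$ respectively produces \eqref{interpolation_exp_T_tild_2} and \eqref{interpolation_exp_T_tild_3}.

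There is no real obstacle here; this theorem is essentially a bookkeeping corollary once Theorems \ref{T_tild_exp} and \ref{POU} are in hand. The only minor subtlety worth flagging is that the expansion \eqref{T_tild_exp_eq} has no "side-of-interface" branching (unlike \eqref{expan_1} and \eqref{expan_2}), which is precisely why the right-hand sides of \eqref{interpolation_exp_T_tild_1}--\eqref{interpolation_exp_T_tild_3} contain a single sum indexed by all of $\mathcal{I}$ rather than the split $\mathcal{I}^{s'}$-plus-$\mathcal{I}$ form seen in Theorem \ref{interpolation_exp}. This makes the result particularly clean and is the right tool for controlling interpolation errors on the small set $T_\ast$, where one has no access to $\mathbf{PH}^2_{int}(T)$-type bounds and must rely on the $\mathbf{W}^{1,6}$-estimate \eqref{T_tild_exp_est}.
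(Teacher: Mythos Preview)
Your proposal is correct and follows exactly the approach the paper indicates: the paper's proof is the one-line remark that the identities ``can be directly verified by applying \eqref{T_tild_exp_eq} to the interpolation operator \eqref{loc_interp},'' and your write-up carries out precisely that verification using the partition-of-unity identities \eqref{POU_2}. There is nothing to add.
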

\begin{proof}
They can be directly verified by applying \eqref{T_tild_exp_eq} to the interpolation operator \eqref{loc_interp}
\end{proof}

Now we are ready to derive estimates for the interpolation error.
\begin{thm}
\label{T_bar_interp_est}
There exists a constant $C$ independent of the interface location such that the following estimate holds for every $\mathbf{ u}\in \mathbf{ PH}^2_{int}(T)$:
\begin{equation}
\label{T_bar_interp_est_eq}
\| I_{h,T}\mathbf{ u} - \mathbf{ u} \|_{0,T_{\ast}^s} + h| I_{h,T}\mathbf{ u} - \mathbf{ u} |_{1,T_{\ast}^s} + h^2| I_{h,T}\mathbf{ u} - \mathbf{ u} |_{2,T_{\ast}^s}\leqslant C h^2 (|\mathbf{ u}|_{1,T}+|\mathbf{ u}|_{2,T}), ~s=\pm, ~\forall T \in \mathcal{T}_h^i.
\end{equation}
\end{thm}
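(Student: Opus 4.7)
The plan is to bound the three terms on the left of \eqref{T_bar_interp_est_eq} separately by invoking the interpolation expansions in Theorem \ref{interpolation_exp}, the boundedness of the IFE shape functions in Theorem \ref{thm_bound}, the remainder estimates in Lemmas \ref{lem_rem_est_1}--\ref{lem_rem_est_2}, and the matrix approximation estimates \eqref{M_bound}--\eqref{M_aprox}. A crucial preparatory step will be to fix the auxiliary point $\overline{X}_i\in l$ appearing in $\bfF_i^s$ to be the orthogonal projection of $\widetilde{Y}_i$ onto $l$, so that Lemma 3.2 of \cite{2016GuoLin} yields $\|\widetilde{Y}_i-\overline{X}_i\|\leqslant Ch^2$.

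With this choice, the pointwise estimates
\begin{equation*}
|\bfE_i^s(X)|\leqslant \|A_i-\widetilde{Y}_i\|\,\|M^s(\widetilde{Y}_i)-\overline{M}^s\|\,|\nabla\bfu^s(X)|\leqslant Ch^{2}|\nabla\bfu^s(X)|,
\end{equation*}
\begin{equation*}
|\bfF_i^s(X)|\leqslant\|\widetilde{Y}_i-\overline{X}_i\|\,\|\overline{M}^s-I_4\|\,|\nabla\bfu^s(X)|\leqslant Ch^{2}|\nabla\bfu^s(X)|,
\end{equation*}
follow immediately, and squaring, integrating over $T_{\ast}^s$ and passing to the union with $T$ gives $\|\bfE_i^s\|_{0,T_{\ast}^s}+\|\bfF_i^s\|_{0,T_{\ast}^s}\leqslant Ch^{2}|\bfu|_{1,T}$. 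Combined with the remainder bound $\|\bfR_i^s\|_{0,T_{\ast}^s}\leqslant Ch^{2}|\bfu|_{2,T}$ from Lemmas \ref{lem_rem_est_1}--\ref{lem_rem_est_2} (which covers both the $A_i,X$-same-side case and the decomposition $\bfR_i^s=\bfR^s_{i1}+\bfR^s_{i2}+\bfR^s_{i3}$ in the opposite-side case), this supplies all the ingredients needed to bound the right-hand side factors in the expansions \eqref{interpolation_exp_1_1}--\eqref{interpolation_exp_1_3}.

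Next I apply the triangle inequality to each expansion and use \eqref{bound_1} in the form $|\Phi_{i,T}|_{k,\infty,T}\leqslant Ch^{-k}$ for $k=0,1,2$. For the $L^2$ term, pulling the shape function out in $L^\infty$ gives
\begin{equation*}
\|I_{h,T}\bfu-\bfu\|_{0,T_{\ast}^s}\leqslant C\sum_{i}\bigl(\|\bfE_i^s\|_{0,T_{\ast}^s}+\|\bfF_i^s\|_{0,T_{\ast}^s}+\|\bfR_i^s\|_{0,T_{\ast}^s}\bigr)\leqslant Ch^{2}\bigl(|\bfu|_{1,T}+|\bfu|_{2,T}\bigr),
\end{equation*}
and the $H^1$ semi-norm gains exactly one factor of $h^{-1}$, leaving the desired $Ch\bigl(|\bfu|_{1,T}+|\bfu|_{2,T}\bigr)$ after multiplication by $h$. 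For the $H^2$ part, note that \eqref{interpolation_exp_1_3} controls $\partial_{x_jx_k}I_{h,T}\bfu$ alone, so I write $\partial_{x_jx_k}(I_{h,T}\bfu-\bfu)=\partial_{x_jx_k}I_{h,T}\bfu-\partial_{x_jx_k}\bfu$, bound the first piece by $Ch^{-2}\cdot Ch^{2}(|\bfu|_{1,T}+|\bfu|_{2,T})$ via the expansion, and absorb $\|\partial_{x_jx_k}\bfu\|_{0,T_{\ast}^s}\leqslant|\bfu|_{2,T}$ directly; multiplying by $h^{2}$ yields the required bound. The argument is then finished by density of $\mathbf{PC}^{2}_{int}(T)$ in $\mathbf{PH}^{2}_{int}(T)$ guaranteed by hypothesis (H4).

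The main subtlety, and the step I expect to be the principal obstacle, is the handling of the $\bfE_i^s$ and $\bfF_i^s$ error terms: without the judicious choice $\overline{X}_i=\mathrm{proj}_l\widetilde{Y}_i$ and the $O(h)$ matrix approximation \eqref{M_aprox}, these terms would give only an $O(h)$ contribution and destroy the optimality. Once the $O(h^{2})$ pointwise control of both $\bfE_i^s$ and $\bfF_i^s$ by $|\nabla\bfu^s(X)|$ is established and correctly routed through the $L^{\infty}$ bounds on the derivatives of $\Phi_{i,T}$, the remainder of the proof is a mechanical triangle-inequality computation.
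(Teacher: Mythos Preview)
Your proposal is correct and follows essentially the same route as the paper's proof: the paper likewise chooses $\overline{X}_i\in l$ so that Lemma~3.2 of \cite{2016GuoLin} gives $\|\widetilde{Y}_i-\overline{X}_i\|\leqslant Ch^2$, bounds $\|\mathbf{E}^s_i\|_{0,T^s_\ast}$ and $\|\mathbf{F}^s_i\|_{0,T^s_\ast}$ by $Ch^2|\mathbf{u}|_{1,T}$ via \eqref{M_aprox} and \eqref{M_bound}, feeds these together with the remainder estimates from Lemmas~\ref{lem_rem_est_1}--\ref{lem_rem_est_2} and the shape-function bounds \eqref{bound_1} into the expansions \eqref{interpolation_exp_1_1}--\eqref{interpolation_exp_1_3}, handles the $H^2$ term by adding back $\|\partial_{x_jx_k}\mathbf{u}\|_{0,T^s_\ast}$, and closes with the density hypothesis~(H4). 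There is no substantive difference between your argument and the paper's.
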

\begin{proof}
First by \eqref{M_aprox} and $\|A_i-\widetilde{Y}_i\|\leqslant Ch$, we have
$
\| \mathbf{ E}^s_i \|_{0,T^s_{\ast}} \leqslant Ch^2 |\mathbf{ u}|_{1,T}
$, $i\in\mathcal{I}$, $s=\pm$.
Noticing $\|\widetilde{Y}_i-\overline{X}_i\|\leqslant Ch^2$ from Lemma 3.2 in \cite{2016GuoLin}, we have
$
\| \mathbf{ F}^s_i \|_{0,T^s_{\ast}} \leqslant Ch^2 |\mathbf{ u}|_{1,T}
$, $i\in\mathcal{I}$, $s=\pm$. Now putting these estimates, Lemmas \ref{lem_rem_est_1}, \ref{lem_rem_est_2} and Theorem \ref{thm_bound} into \eqref{interpolation_exp_1_1} and \eqref{interpolation_exp_1_2}, for $s=\pm$, $j=1,2$, we have
\begin{align*}
    & \| I_{h,T}\mathbf{ u}-\mathbf{ u} \|_{0,T_{\ast}^s} \leqslant \sum_{i\in\mathcal{I}^{s'}} C ( \|\mathbf{ E}_i^s\|_{0,T_{\ast}^s} + \|\mathbf{ F}_i^s\|_{0,T_{\ast}^s} )+\sum_{i\in\mathcal{I}} C \|\mathbf{ R}_i^s\|_{0,T_{\ast}^s}\leqslant Ch^2(|\mathbf{ u}|_{1,T}+|\mathbf{ u}|_{2,T}),  \\
    &  \|\partial_{x_j} I_{h,T} \mathbf{ u} - \partial_{x_j}\mathbf{ u} \|_{0,T_{\ast}^s} \leqslant \sum_{i\in\mathcal{I}^{s'}} Ch^{-1}(\|\mathbf{ E}_i^s\|_{0,T_{\ast}^s} + \|\mathbf{ F}_i^s\|_{0,T_{\ast}^s}) + \sum_{i\in\mathcal{I}} Ch^{-1} \|\mathbf{ R}_i^s\|_{0,T_{\ast}^s}\leqslant Ch(|\mathbf{ u}|_{1,T}+|\mathbf{ u}|_{2,T}).
    \end{align*}
In addition, by \eqref{interpolation_exp_1_3}, for $j, k = 1,2$, we have
\begin{align*}
  \|\partial_{x_jx_k} I_{h,T} \mathbf{ u} - \partial_{x_jx_k}\mathbf{ u} \|_{0,T_{\ast}^s} &\leqslant \| \partial_{x_jx_k}\mathbf{ u} \|_{0,T_{\ast}^s} +  \sum_{i\in\mathcal{I}^{s'}} Ch^{-2}(\|\mathbf{ E}_i^s\|_{0,T_{\ast}^s} + \|\mathbf{ F}_i^s\|_{0,T_{\ast}^s}) + \sum_{i\in\mathcal{I}} Ch^{-2} \|\mathbf{ R}_i^s\|_{0,T_{\ast}^s}\\
  &\leqslant C(|\mathbf{ u}|_{1,T}+|\mathbf{ u}|_{2,T}).
\end{align*}
These estimates lead to the desired result for $\mathbf{ u}\in\mathbf{ PC}^2_{int}(T)$. Then the estimation for $\mathbf{ u}\in\mathbf{ PH}^2_{int}(T)$ can be obtained from the density Hypothesis \textbf{(H4)}.
\end{proof}


\begin{thm}
\label{T_tild_interp_est}
There exists a constant $C$ independent of the interface location such that, when the mesh is fine enough, the following estimate holds for every $\mathbf{ u}\in \mathbf{ PH}^2_{int}(T)$:
\begin{equation}
\label{T_tild_interp_est_eq}
\| I_{h,T}\mathbf{ u} - \mathbf{ u} \|_{0,T_{\ast}} + h| I_{h,T}\mathbf{ u} - \mathbf{ u} |_{1,T_{\ast}} + h^2| I_{h,T}\mathbf{ u} - \mathbf{ u} |_{2,T_{\ast}}\leqslant C h^2 (\| \mathbf{ u} \|_{1,6,T}+\| \mathbf{ u} \|_{2,T}),~~\forall T \in \mathcal{T}_h^i.
\end{equation}
\end{thm}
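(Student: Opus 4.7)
The plan is to directly substitute the expansions from Theorem \ref{interpolation_exp_T_tild} and estimate each piece using the boundedness of the IFE shape functions from Theorem \ref{thm_bound}, the remainder estimate from Lemma \ref{lem_T_tild_exp_est}, the smallness $|T_{\ast}|\leqslant Ch^3$ (which follows from Lemma \ref{T_int_area} together with $|\widetilde{T}|\leqslant Ch^3$ from \cite{2004LiLinLinRogers}), and Hölder's inequality. As in the previous theorem, I would first argue for $\mathbf{u}\in\mathbf{PC}^2_{int}(T)$ and then extend to $\mathbf{PH}^2_{int}(T)$ by the density Hypothesis \textbf{(H4)}.

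For the $L^2$ piece, the expansion \eqref{interpolation_exp_T_tild_1} together with $\|\Phi_{i,T}\|_{0,\infty,T}\leqslant C$ and Lemma \ref{lem_T_tild_exp_est} immediately gives $\|I_{h,T}\mathbf{u}-\mathbf{u}\|_{0,T_{\ast}}\leqslant Ch^2\|\mathbf{u}\|_{1,6,T}$. For the $H^1$ piece, \eqref{interpolation_exp_T_tild_2} produces two kinds of terms. The sum $\sum_i \partial_{x_j}\Phi_{i,T}\,\widetilde{\mathbf{R}}_i$ is bounded by $Ch^{-1}\cdot Ch^2\|\mathbf{u}\|_{1,6,T}=Ch\|\mathbf{u}\|_{1,6,T}$ using $|\Phi_{i,T}|_{1,\infty,T}\leqslant Ch^{-1}$. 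The extra boundary term $-\partial_{x_j}\mathbf{u}$ needs Hölder with exponents $(3,\tfrac{3}{2})$: since $|T_{\ast}|\leqslant Ch^3$,
\[
\|\partial_{x_j}\mathbf{u}\|_{0,T_{\ast}} \leqslant |T_{\ast}|^{1/3}\|\partial_{x_j}\mathbf{u}\|_{0,6,T}\leqslant Ch\,\|\mathbf{u}\|_{1,6,T},
\]
which is where the 2D Sobolev embedding $H^2\hookrightarrow W^{1,6}$ is used. Multiplying by $h$ gives the desired $Ch^2$ factor.

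For the $H^2$ piece, I would use \eqref{interpolation_exp_T_tild_3}. The summation term is controlled by $Ch^{-2}\cdot Ch^2\|\mathbf{u}\|_{1,6,T}=C\|\mathbf{u}\|_{1,6,T}$ via $|\Phi_{i,T}|_{2,\infty,T}\leqslant Ch^{-2}$, while the extra term $-\partial_{x_jx_k}\mathbf{u}$ is simply bounded by $|\mathbf{u}|_{2,T_{\ast}}\leqslant |\mathbf{u}|_{2,T}$; after multiplying by $h^2$ both parts fit into the target bound. Assembling the three scaled estimates yields \eqref{T_tild_interp_est_eq} for piecewise $C^2$ functions, and the density argument completes the proof.

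The main obstacle, and the reason the statement involves the mixed norm $\|\mathbf{u}\|_{1,6,T}+\|\mathbf{u}\|_{2,T}$ rather than $\|\mathbf{u}\|_{2,T}$ alone, is that on $T_{\ast}$ the simpler expansion of Theorem \ref{interpolation_exp_T_tild} leaves the bare derivatives $\partial_{x_j}\mathbf{u}$ and $\partial_{x_jx_k}\mathbf{u}$ on the right-hand side; these are not automatically small of order $h^2$ on the thin strip $T_{\ast}$ unless we extract the measure-smallness through Hölder with a strictly higher integrability than $L^2$. The $L^6$ route, combined with $|T_{\ast}|\leqslant Ch^3$, is precisely tuned to gain the needed extra factor of $h$ on the first-derivative term, while the $H^2$ seminorm is simply kept as a remainder on the principal $\partial_{x_jx_k}\mathbf{u}$ term.
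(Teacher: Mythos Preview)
Your proposal is correct and follows essentially the same approach as the paper: substitute the expansions of Theorem \ref{interpolation_exp_T_tild}, bound the remainder terms $\widetilde{\mathbf{R}}_i$ via Lemma \ref{lem_T_tild_exp_est} together with the shape-function bounds of Theorem \ref{thm_bound}, handle the bare $\partial_{x_j}\mathbf{u}$ term by H\"older with $|T_{\ast}|\leqslant Ch^3$, bound the $\partial_{x_jx_k}\mathbf{u}$ term trivially by $|\mathbf{u}|_{2,T}$, and finish by density via \textbf{(H4)}. The only cosmetic difference is that the paper states the three shape-function-times-remainder bounds first and then the H\"older step, whereas you interleave them by order of derivative; the content is identical.
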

\begin{proof}
By the same arguments used for the prove of Lemma \ref{lem_T_tild_exp_est}, we know that $|T_{\ast}|\leqslant Ch^3$ for a mesh fine enough. Then,
according to \eqref{fe_bound}, Lemma \ref{lem_T_tild_exp_est} and Theorem \ref{interpolation_exp_T_tild}, for $j, k = 1,2$, we have
$$
\| \Phi_{i,T}\widetilde{\mathbf{ R}}_i \|_{0,T_{\ast}} \leqslant Ch^2 \|\mathbf{ u}\|_{1,6,T},  ~~~ \| \partial_{x_j}\Phi_{i,T}\widetilde{\mathbf{ R}}_i \|_{0,T_{\ast}} \leqslant Ch\|\mathbf{ u}\|_{1,6,T} ~~~ \textrm{and} ~~~ \| \partial_{x_jx_k}\Phi_{i,T}\widetilde{\mathbf{ R}}_i \|_{0,T_{\ast}} \leqslant C\|\mathbf{ u}\|_{1,6,T}.
$$
Besides, the H\"older's inequality implies
$$
\left( \int_{T_{\ast}} (\partial_{x_j}u_m)^2 dX \right)^{\frac{1}{2}} \leqslant \left( \int_{T_{\ast}} 1^{\frac{3}{2}} dX\right)^{\frac{1}{3}} \left( \int_{T_{\ast}} (\partial_{x_j}u_m)^6 dX \right)^{\frac{1}{6}} \leqslant Ch \| u_m \|_{1,6,T},~m = 1, 2,
$$
where we use the fact $|T_{\ast}|\leqslant Ch^3$. For the second derivatives, it is easy to see that
$$
\left( \int_{T_{\ast}} (\partial_{x_jx_k}u_j)^2 dX \right)^{\frac{1}{2}} \leqslant C \| u_j \|_{2,T},~~j, k = 1, 2.
$$
By applying the estimates above to \eqref{interpolation_exp_T_tild_1}, \eqref{interpolation_exp_T_tild_2} and \eqref{interpolation_exp_T_tild_3}, we have \eqref{T_tild_interp_est_eq} for all $\mathbf{ u}\in\mathbf{ PC}^2_{int}(T)$. Again the result for $\mathbf{ u}\in\mathbf{ PH}^2_{int}(T)$ follows from the density Hypothesis \textbf{(H4)}.
\end{proof}

Finally by combing the results above, we can prove the optimal approximation capabilities for the proposed IFE space through the following error estimation for the global interpolation operator.

\begin{thm}
There exists a constant $C$ independent of the interface location such that, when the mesh $\mathcal{T}_h$ is fine enough, the following estimate holds for every $\mathbf{ u}\in \mathbf{ PH}^2_{int}(T)$:
\begin{equation}
\label{glob_interp_est}
\| I_h \mathbf{ u} - \mathbf{ u} \|_{0,\Omega} + h|I_h\mathbf{ u} - \mathbf{ u}|_{1,h,\Omega} + h^2|I_h\mathbf{ u} - \mathbf{ u}|_{2,h,\Omega} \leqslant Ch^2 \| \mathbf{ u} \|_{2,\Omega},
\end{equation}
where $\abs{\,\cdot\,}_{1,h,\Omega}$ and $\abs{\,\cdot\,}_{2,h,\Omega}$ are the usual discrete semi-norms defined according to the mesh $\mathcal{T}_h$.
\end{thm}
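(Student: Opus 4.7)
The plan is to reduce the global estimate to the already-established local estimates by partitioning the mesh and then each interface element into the appropriate subsets. First I would split $\mathcal{T}_h=\mathcal{T}_h^n\cup\mathcal{T}_h^i$ and handle non-interface elements by the standard polynomial scaling argument \eqref{non_int_interp_est} applied componentwise to $\mathbf u=(u_1,u_2)^T$, which gives $\|I_{h,T}\mathbf u-\mathbf u\|_{0,T}+h|I_{h,T}\mathbf u-\mathbf u|_{1,T}+h^2|I_{h,T}\mathbf u-\mathbf u|_{2,T}\leq Ch^2|\mathbf u|_{2,T}$ for every $T\in\mathcal{T}_h^n$.

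For $T\in\mathcal{T}_h^i$, I would use the decomposition $T=T^-_\ast\cup T^+_\ast\cup T_\ast$ introduced in Section~4. On the bulk pieces $T^s_\ast$ ($s=\pm$) I apply Theorem~\ref{T_bar_interp_est} to obtain the bound $Ch^2(|\mathbf u|_{1,T}+|\mathbf u|_{2,T})$ on the $L^2$, $H^1$, and $H^2$ pieces (scaled by the correct power of $h$). On the thin strip $T_\ast$, whose measure is $O(h^3)$ by Lemma~\ref{T_int_area} together with $|\widetilde T|\leq Ch^3$, I apply Theorem~\ref{T_tild_interp_est} to get a bound $Ch^2(\|\mathbf u\|_{1,6,T}+\|\mathbf u\|_{2,T})$. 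Adding these three local contributions gives, for each interface element,
\begin{equation*}
\|I_{h,T}\mathbf u-\mathbf u\|_{0,T}+h|I_{h,T}\mathbf u-\mathbf u|_{1,T}+h^2|I_{h,T}\mathbf u-\mathbf u|_{2,T}\leq Ch^2\bigl(\|\mathbf u\|_{1,6,T}+\|\mathbf u\|_{2,T}\bigr).
\end{equation*}

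To pass to the global norm I would square and sum over all elements. For the $\|\mathbf u\|_{2,T}$ terms, finite overlap of elements gives $\sum_T\|\mathbf u\|_{2,T}^2\leq C\|\mathbf u\|_{2,\Omega}^2$ directly. The only nontrivial point is controlling $\sum_{T\in\mathcal{T}_h^i}\|\mathbf u\|_{1,6,T}^2$: here I would invoke the continuous Sobolev embedding $\mathbf H^2(\Omega^s)\hookrightarrow \mathbf W^{1,6}(\Omega^s)$ on each subdomain and observe that the interface elements form a strip of $O(h)$ width along $\Gamma$; applying Hölder's inequality in the elementwise sum (using $\sum_T 1=O(h^{-1})$ over the interface band against the global $L^6$-type norm) yields $\bigl(\sum_T\|\mathbf u\|_{1,6,T}^2\bigr)^{1/2}\leq C\|\mathbf u\|_{2,\Omega}$. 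Combining the non-interface and interface contributions gives \eqref{glob_interp_est}.

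The main obstacle is this last reduction of $\|\mathbf u\|_{1,6,T}$ back to the $\mathbf H^2(\Omega)$-type norm in a way that does not lose powers of $h$; it is tempting but incorrect to bound $\sum_T\|\mathbf u\|_{1,6,T}^2$ by a single $\mathbf W^{1,6}(\Omega)$-norm. The correct route is to keep the estimate $\|\mathbf u\|_{1,6,T}\leq C\|\mathbf u\|_{\mathbf W^{1,6}(\Omega^s)}$ in each piece $T\cap\Omega^s$, invoke the Sobolev embedding on each $\Omega^s$ separately (which is where the piecewise regularity provided by $\mathbf{PH}^2_{int}$ is essential), and use Hölder together with the fact that only $O(h^{-1})$ interface elements appear in the sum. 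Once this is handled, everything else is bookkeeping, and the desired optimal $O(h^2)$, $O(h)$, $O(1)$ estimates in the three discrete norms follow immediately from summing the local bounds.
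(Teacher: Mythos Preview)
Your plan is essentially the paper's: combine the non-interface estimate \eqref{non_int_interp_est} with the interface-element estimates of Theorems~\ref{T_bar_interp_est} and~\ref{T_tild_interp_est}, sum over $\mathcal{T}_h$, and then absorb the residual $W^{1,6}$ norm via Sobolev embedding into $H^2$. The paper's proof is two lines: it asserts that the summation produces the global bound $Ch^2(\|\mathbf u\|_{2,\Omega}+\|\mathbf u\|_{1,6,\Omega})$ and then quotes the embedding $\|w\|_{1,p,\Omega}\leq C\|w\|_{2,\Omega}$ from~\cite{RenWei1994}, applied piecewise on each $\Omega^s$.

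Where you diverge is in the handling of $\sum_{T\in\mathcal{T}_h^i}\|\mathbf u\|_{1,6,T}^2$. You flag the naive reduction to a single global $\mathbf W^{1,6}$-norm as ``tempting but incorrect'' and propose instead a H\"older argument using $|\mathcal{T}_h^i|=O(h^{-1})$. But that route does not deliver the bound you state: H\"older with exponents $(3,3/2)$ only gives
\[
\sum_{T\in\mathcal{T}_h^i}\|\mathbf u\|_{1,6,T}^2\;\le\;|\mathcal{T}_h^i|^{2/3}\Bigl(\sum_{T\in\mathcal{T}_h^i}\|\mathbf u\|_{1,6,T}^6\Bigr)^{1/3}\;\le\;Ch^{-2/3}\,\|\mathbf u\|_{1,6,\Omega}^2,
\]
so a stray factor $h^{-1/3}$ survives the square root and would cost you the optimal rate. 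You have correctly isolated the one nontrivial step in the summation, but the fix you sketch does not close it. The paper sidesteps this detour entirely by passing directly to the global $\|\mathbf u\|_{1,6,\Omega}$ and invoking the embedding there, rather than trying to control an elementwise $\ell^2$ sum of local $W^{1,6}$ norms.
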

\begin{proof}
By putting \eqref{T_bar_interp_est_eq} and \eqref{T_tild_interp_est_eq} together over all the elements $T$, we have
$$
\| I_h \mathbf{ u} - \mathbf{ u} \|_{0,\Omega} + h|I_h\mathbf{ u} - \mathbf{ u}|_{1,h,\Omega} + h^2|I_h\mathbf{ u} - \mathbf{ u}|_{2,h,\Omega} \leqslant Ch^2 (\| \mathbf{ u} \|_{2,\Omega} + \| \mathbf{ u} \|_{1,6,\Omega}).
$$
Then using the inequality $\|w\|^2_{1,p,\Omega}\leqslant C\|w\|^2_{2,\Omega}$ for any $w\in W^{1,p}(\Omega)$ from \cite{RenWei1994}, we have \eqref{glob_interp_est}.
\end{proof}

\section{Numerical Examples}
In this section we demonstrate the optimal approximation capabilities of the IFE spaces by numerical examples. We use an example similar to that given in \cite{2012LinZhang} in which the solution domain is $\Omega=[-1,1]\times[-1,1]$ and the exact solution $\mathbf{ u}$ to the elasticity interface problem
described by \eqref{elas_eq0}-\eqref{jump_stress} is
\begin{equation}
\label{true_solu}
\mathbf{u}(x_1,x_2)=
\left[\begin{array}{c} u_1(x_1,x_2) \\ u_2(x_1,x_2) \end{array}\right]=
\begin{cases}
\left[\begin{array}{c} u^-_1(x_1,x_2) \\ u^-_2(x_1,x_2) \end{array}\right] =
\left[\begin{array}{c} \frac{a^2b^2}{\lambda^-}r^{\alpha_1} \\ \frac{a^2b^2}{\lambda^-}r^{\alpha_2} \end{array}\right] & \text{if} \; X\in \Omega^- , \\
\left[\begin{array}{c} u^+_1(x_1,x_2) \\ u^+_2(x_1,x_2) \end{array}\right] =
\left[\begin{array}{c} \frac{a^2b^2}{\lambda^+}r^{\alpha_1}+\left( \frac{1}{\lambda^-}-\frac{1}{\lambda^+}a^2b^2 \right] \\  \frac{a^2b^2}{\lambda^+}r^{\alpha_2}+\left( \frac{1}{\lambda^-}-\frac{1}{\lambda^+}a^2b^2 \right) \end{array}\right] & \text{if} \; X\in \Omega^+ ,
\end{cases}
\end{equation}
where $\lambda^-=1$, $\lambda^+=5$, $\mu^-=2$ and $\mu^+=10$, $a=b=\pi/6.28$, $\alpha_1=5$, $\alpha_2=7$ and $r(x_1,x_2)=x_1^2/a^2+x_2^2/b^2$, the interface $\Gamma$ is a circle defined by the zero level set $r(x_1,x_2)-1=0$ and $\Omega^-=\{ (x_1,x_2)^T~:~r(x_1,x_2)<1\}$, $\Omega^+=\{ (x_1,x_2)^T~:~r(x_1,x_2)>1\}$. All of the numerical results presented below are generated by the proposed bilinear IFE space on Cartesian meshes. The errors are measured in both the $L^2$ and semi-$H^1$ norms over a sequence a meshes with the size specified by $h$.

We first present the numerical results for the interpolation operator $I_h\mathbf{ u}$ defined by \eqref{loc_interp} and \eqref{glob_interp} in Table \ref{table:bilinearIterpolationError}. The convergence rate $r$ is estimated from the errors computed on two consecutive meshes. As expected, the numerical results clear show that the interpolation errors converge optimally.

Next, for the IFE solution to the elasticity interface problem, we consider an IFE Galerkin scheme discussed in \cite{2010GongLi,2012LinZhang}: find $\mathbf{ u}_h\in\mathbf{ S}_h(\Omega)$ such that $\mathbf{ u}_h=I_h\mathbf{ g}$ on $\partial\Omega$,
\begin{equation}
\label{IFE_scheme}
a(\mathbf{ u}_h,\mathbf{ v}_h) = L(v_h), ~~ \forall \mathbf{ v}_h\in \mathbf{ S}_{h,0}(\Omega),
\end{equation}
where $\mathbf{ S}_{h,0}=\{\mathbf{ v}_h\in\mathbf{ S}_h~:~ \mathbf{ v}|_{\partial\Omega}=\mathbf{ 0}\}$ and
\begin{equation}
\label{aL}
a(\mathbf{ u}_h,\mathbf{ v}_h) = \sum_{T\in\mathcal{T}_h} \int_T 2\mu \epsilon(\mathbf{ u}_h):\epsilon(\mathbf{ v}_h) + \lambda \textrm{div}(\mathbf{ u}_h)\textrm{div}(\mathbf{ v}_h) dX, ~~ \textrm{and} ~~ L(\mathbf{ v}_h) =\sum_{T\in\mathcal{T}_h} \int_{T} \mathbf{ f}\cdot\mathbf{ v}_h dX.
\end{equation}
Errors of the IFE solution are listed in Table \ref{table:bilinearSolutionError} in which, again, we use the errors generated from two consecutive meshes to estimate the convergence rate. The data in this table clear demonstrate that the IFE solutions $\mathbf{ u}_h$ also converges to the exact solution $\mathbf{ u}$ optimally.

\begin{table}[H]
\begin{center}
\begin{tabular}{|c |c c|c c|}
\hline
$h$    & $\|\mathbf{ u} - I_h\mathbf{ u}\|_{0,\Omega}$ & rate   & $|\mathbf{ u} - I_h\mathbf{ u}|_{1,\Omega}$ & rate   \\ \hline
1/10  & 5.6990E-1                  &             & 6.8680E+0    &                   \\ \hline
1/20  & 1.4528E-1                 &  1.9719 & 3.4933E+0             & 0.9753        \\ \hline
1/40   & 3.6502E-2                 & 1.9928 & 1.7544E+0             & 0.9936 \\ \hline
1/80   & 9.1372E-3                 & 1.9981 & 8.7822E-1               & 0.9984 \\ \hline
1/160  & 2.2851E-3                 & 1.9995 & 4.3924E-1               & 0.9996 \\ \hline
1/320  & 5.7132E-4                 & 1.9999 & 2.1964E-1               & 0.9999 \\ \hline
1/640  & 1.4283E-4                 & 2.0000 & 1.0982E-1               & 1.0000 \\ \hline
1/1280 & 3.5709E-5                 & 2.0000 & 5.4911E-2               & 1.0000 \\ \hline
\end{tabular}
\end{center}
\caption{IFE Interpolation errors and rates for the bilinear IFE functions}
\label{table:bilinearIterpolationError}
\end{table}

\begin{table}[H]
\begin{center}
\begin{tabular}{|c |c c|c c|}
\hline
$h$    & $\|\mathbf{ u} - \mathbf{ u}_h\|_{0,\Omega}$ & rate   & $|\mathbf{ u} - \mathbf{ u}_h |_{1,\Omega}$ & rate   \\ \hline
1/10  & 6.6120E-1                  &                 & 6.8668E+0            &                 \\ \hline
1/20  & 1.6880E-1                 &  1.9698      & 3.4932E+0           &  0.9751      \\ \hline
1/40   & 4.2380E-2                 &  1.9938      & 1.7545E+0          & 0.9935  \\ \hline
1/80   & 1.0599E-2                 &  1.9995 & 8.7833E-1               & 0.9982 \\ \hline
1/160  & 2.6485E-3                 & 2.0007 & 4.3933E-1               & 0.9995 \\ \hline
1/320  & 6.6160E-4                 & 2.0011 & 2.1972E-1               & 0.9997 \\ \hline
1/640  & 1.6493E-4                 & 2.0041 & 1.0991E-1               & 0.9994 \\ \hline
1/1280 & 4.1100E-5                 & 2.0047 & 5.4990E-2               & 0.9990 \\ \hline
\end{tabular}
\end{center}
\caption{IFE Solution errors and rates for the bilinear IFE functions}
\label{table:bilinearSolutionError}
\end{table}

\bibliographystyle{plain}

\end{document}